\definecolor{NBrown}{HTML}{66220C}
\definecolor{NAqua}{HTML}{00698C}
\definecolor{ForestGreen}{HTML}{228b22}
\newtheorem{theorem}{Theorem}[section]
\newtheorem{lemma}[theorem]{Lemma}
\newtheorem{proposition}[theorem]{Proposition}
\newtheorem{corollary}[theorem]{Corollary}
\renewcommand\@dotsep{10000}
\renewcommand{\epsilon}{\varepsilon}
\newcommand{\E}[1]{\mathbb{E}\!\left[#1\right]}
\newcommand{\estart}[2]{\mathbb{E}_{#2}\!\left[#1\right]}
\newcommand{\Pb}{\mathbb{P}}
\newcommand{\bPb}{\mathbf{P}}
\newcommand{\Pcal}{\mathcal{P}}
\newcommand{\indic}[1]{\mathbbm{1}_{\left\{#1\right\}}}
\newcommand{\R}{\mathbb{R}}
\newcommand{\N}{\mathbb{N}}
\newcommand{\I}{\mathcal{I}}
\newcommand{\T}{\mathcal{T}}
\newcommand{\F}{\mathcal{F}}
\newcommand{\W}{\mathcal{W}}
\newcommand{\Z}{\mathcal{Z}}
\newcommand{\J}{\hat{J}}
\newcommand{\ZZZ}{\overline{Z}}
\newcommand{\zetas}{\zeta^{[s]}}
\newcommand{\zetass}{\zeta^{[s^*]}}
\newcommand{\Wts}{\widehat{W}^{[s]}}
\newcommand{\Wtss}{\widehat{W}^{[s^*]}}
\newcommand{\Ws}{{W}^{[s]}}
\newcommand{\Wt}{\widehat{W}}
\newcommand{\Noo}{\N^{(1)}}
\newcommand{\No}{N^{(1)}}
\newcommand{\Ns}{N^{(s)}}
\def\X{X^{\text{exc}}}
\newcommand{\HH}{\overleftarrow{H}}
\def\Ta{\T_{\alpha}}
\newcommand{\dt}{d_{\alpha}}
\newcommand{\Ito}{It\^o }
\newcommand{\supp}{\text{supp }}
\def\Levy{L\'evy }
\def\cadlag{c\`{a}dl\`{a}g }
\begin{document}

\title{Some properties of stable snakes}
\author{{Eleanor Archer}\footnote{\href{mailto:earcher@parisnanterre.fr}{earcher@parisnanterre.fr}}, {Ariane Carrance}\footnote{\href{mailto:ariane.carrance@math.cnrs.fr}{ariane.carrance@math.cnrs.fr}},\, and {Laurent Ménard}\footnote{\href{mailto:laurent.menard@normalesup.org}{laurent.menard@normalesup.org}}}

\date{\today}

\maketitle


\begin{abstract}
We prove some technical results relating to the Brownian snake on a stable \Levy tree. This includes some estimates on the range of the snake, estimates on its occupation measure around its minimum and also a proof of the fact that the snake and the height function of the associated tree have no common increase points.


\end{abstract}



\section{Introduction}

The purpose of this work is to study the behaviour of the Brownian snake on a stable \Levy tree. Stable \Levy trees arise as scaling limits of critical Bienaymé--Galton--Watson trees with infinite variance offspring distributions, generalising the appearance of the Brownian CRT as the scaling limit of finite variance critical Bienaymé--Galton--Watson trees and were first studied by Le Gall and Le Jan in \cite{le1998branching}. The associated Brownian snake can be thought of as Brownian motion indexed by the \Levy tree and was studied by Duquesne and Le Gall in \cite{LeGDuqMono}. 

This work follows on naturally from the works of \cite{marzouk-brownianstable}, in which Marzouk shows that such snakes arise as universal scaling limits of discrete snakes on Bienaymé--Galton--Watson trees, and \cite{riera2022structure}, in which Riera and Rosales-Ortiz develop an excursion theory for such limiting snakes. One motivation for studying this model comes from close connections to superprocesses \cite{LeGDuqMono, LeGallBook}. Our main motivation stems from applications to the construction of scaling limits of random planar maps. Indeed, the Brownian snake (Brownian motion indexed by the Brownian CRT) is a crucial ingredient in the celebrated construction of the Brownian sphere and its identification as a universal scaling limit of a very large class of random planar map models \cite{marckert2006limit,le2013uniqueness, miermont2013brownian}. The work \cite{marzouk-brownianstable} suggests that a parallel construction of a \textit{stable sphere} could be made using the Brownian snake indexed by a stable \Levy tree, and, thanks to bijections between labelled trees and random planar maps, that this sphere should arise as the scaling limit of a class of stable-type quadrangulations (as well as other similar models). The purpose of this paper is to lay the groundwork for a programme to prove such a result.

The stable \Levy tree can be coded by a stable \Levy excursion in a canonical way (we will explain this precisely in Section \ref{section:stable-trees}). For this reason it is natural to consider the law of the associated snake under the \Ito excursion measure, which is the canonical law on such excursions, either in its unnormalised ($\N$) or normalised ($\Noo$) form.

Under $\N$ or $\Noo$, the Brownian snake on the stable \Levy tree can be encoded by a pair $(H_t, Z_t)_{t \in [0,\sigma]}$ where $\sigma$ denotes the length of the associated excursion (and is also known as the lifetime; it also corresponds to the total volume of the associated \Levy tree), $H$ denotes the height function of the associated tree and $Z_t$ denotes the value of the snake at the point in the tree coded by time $t$. Moreover, the pieces of notation $\N_x$ or $\Noo_x$ denote the fact that $Z_0=x$ (these notions will all also be defined precisely in Section \ref{section:stable-trees}).

In what follows, we let $\mathcal{R} = \{Z_t: t \in [0, \sigma]\}$ denote the range of the snake, and let $R = \sup \{x \in \R: x \in \mathcal{R}\}$ denote its maximal value. Our first result is a characterisation of the joint distribution of the lifetime and the range of the stable snake under \Ito excursion measure, much in the spirit of the work of Delmas~\cite{delmas2003computation} in the Brownian case:

\begin{theorem} \label{theorem:main}
Fix $(a,b) \varsubsetneq \mathbb R$. For $\lambda \geq 0$ and $x \in (a,b)$ we set
\begin{equation*}
v_{\lambda,a,b} (x) = \mathbb N_x \left[ 1 - \indic{\mathcal R \subset (a,b)} \, \exp(-\lambda \sigma)\right].
\end{equation*}
The mapping $x \mapsto v_{\lambda,a,b} (x)$ satisfies the ODE
\begin{equation}
\label{eqn:ODE ab local mu infinite}
\begin{cases}
\frac{1}{2} v'' = v^\alpha - \lambda \quad \text{on $(0,+\infty)$},\\
\lim_{x \downarrow a} v(x) = + \infty \text{ if $ a = - \infty$ and $\lambda^{1/\alpha}$ if $a$ is finite} ,\\
\lim_{x \uparrow b} v(x) = + \infty \text{ if $ b = + \infty$ and $\lambda^{1/\alpha}$ if $b$ is finite}.
\end{cases}
\end{equation}
\end{theorem}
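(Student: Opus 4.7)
The plan is to adapt the strategy of Delmas~\cite{delmas2003computation} for the Brownian snake on the CRT to the stable setting, building on the snake excursion theory of Riera and Rosales-Ortiz~\cite{riera2022structure}. The guiding principle is that $v_{\lambda,a,b}$ is a Laplace-type functional of the stable snake, which will be characterised by the semilinear ODE associated with the branching mechanism $\psi(u) = u^\alpha$ and the spatial motion generator $\tfrac{1}{2}\partial_x^2$; the $-\lambda$ source term on the right-hand side enters via a Feynman--Kac treatment in which $\lambda$ plays the role of a killing rate for the underlying Brownian motion.

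Concretely, for an open subinterval $D = (a', b')$ with $[a', b'] \subset (a, b)$ and a bounded non-negative function $h$ on $\{a', b'\}$, I would first introduce the exit measure $\mathcal{Z}^D$ of the snake from $D$. A stable analogue of Le Gall's PDE characterisation of exit measures (whose key input is the special Markov property of the snake) should ensure that
$$
u_h(x) \;=\; \mathbb N_x\!\left[\, 1 - \exp\!\bigl(-\lambda \sigma - \langle \mathcal{Z}^D, h\rangle\bigr)\right]
$$
is the unique non-negative classical solution of $\tfrac{1}{2} u_h'' = u_h^\alpha - \lambda$ on $D$ with boundary values $u_h = h$ on $\partial D$. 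Letting $h \uparrow \infty$, and using that $\{\mathcal{R} \subset D\} = \{\mathcal{Z}^D = 0\}$ holds $\mathbb N_x$-almost everywhere by continuity of the snake, monotone convergence gives $u_h(x) \uparrow \mathbb N_x[1 - \mathbbm{1}_{\mathcal{R} \subset D}\, e^{-\lambda\sigma}]$, which is the maximal solution of the ODE blowing up at $\partial D$. Letting $D \uparrow (a, b)$ then identifies the limit with $v_{\lambda, a, b}$ and transfers the ODE to $(a, b)$.

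For the boundary values, I would use the translation invariance $v_{\lambda, a, b}(x) = v_{\lambda, a-x, b-x}(0)$ (the snake's spatial motion is Brownian motion) to reduce to an endpoint near $0$. Near a finite endpoint, the infinite mass carried by small snake excursions at their starting point forces blow-up of $v$. Near an infinite endpoint, the constraint $\mathbbm{1}_{\mathcal{R} \subset \cdot}$ becomes vacuous in the limit, and dominated convergence combined with the classical identity $\mathbb N_0[1 - e^{-\lambda \sigma}] = \lambda^{1/\alpha}$ (the Laplace exponent of the stable Itô excursion length) delivers the remaining finite limit.

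The hard part will be making the PDE characterisation of $u_h$ fully rigorous in the stable setting: the classical Le Gall treatment is written for $\alpha = 2$, and in the stable case it must be adapted using the special Markov property and exit-measure theory for stable snakes, together with continuity of the exit measure in the starting point and in the domain. Once this is in place, the limits $h \uparrow \infty$ and $D \uparrow (a, b)$ should reduce to standard comparison arguments for the semilinear ODE $\tfrac{1}{2} u'' = u^\alpha - \lambda$, whose solutions with prescribed boundary data (finite value or $+\infty$) are uniquely determined, completing the identification claimed in the theorem.
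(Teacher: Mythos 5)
Your overall route is the same as the paper's: penalise by the exit measure, show the penalised functional solves $\tfrac12 v''=v^\alpha-\lambda$ with finite boundary data, and let the penalty tend to infinity. Indeed your $u_h$ with constant $h\equiv\mu$ is exactly the quantity $v_{\lambda,\mu,a,b}$ of \cref{lemma:vlambdamu} (since $\langle\mathcal Z^{(a,b)},\mu\rangle=\mu L_\sigma$), and your passage $h\uparrow\infty$ together with $\{\mathcal R\subset(a,b)\}=\{L_\sigma=0\}$ and the closedness of the set of non-negative solutions under pointwise limits is precisely how the paper deduces \cref{theorem:main} from that lemma. The genuine issue is that the step you yourself label ``the hard part'' is not something one can wave at: the proof of \cref{lemma:vlambdamu} is essentially the whole proof of the theorem. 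Concretely, the paper (i) uses the special Markov property (\cref{prop:special MP}) together with \eqref{eqn:Laplace sigma} to replace the lifetime of the excursions outside $(a,b)$ by $\lambda^{1/\alpha}L_\sigma$, (ii) introduces the additive functional $d\widetilde L_s=\lambda\indic{\tau(W_s)=\infty}\,ds+(\lambda^{1/\alpha}+\mu)\,dL_s$ and applies the Markov property plus the Poisson decomposition of \cref{lemma:onesidePoisson}, (iii) converts the resulting expressions via the many-to-one formulas \cref{prop:firstmoment ds} and \cref{prop:firstmomentdLs} into Brownian expectations involving the spinal subordinators with Laplace exponent $\tilde\psi(u)=u^{\alpha-1}$, and only then (iv) runs the Feynman--Kac computation. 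None of this is supplied by your sketch, so as written the proposal reduces the theorem to an unproved lemma. A minor point within the sketch: because of the $e^{-\lambda\sigma}$ factor, the boundary value of $u_h$ at a finite endpoint of $D$ is $h+\lambda^{1/\alpha}$ rather than $h$ (the outside excursions contribute $\lambda^{1/\alpha}$ per unit of exit local time); this washes out as $h\to\infty$ but matters for the finite-$h$ statement.

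On the boundary conditions: your conclusion (blow-up at finite endpoints, limit $\lambda^{1/\alpha}$ at infinite endpoints) contradicts the displayed statement of \cref{theorem:main} but is the correct one --- it is what \cref{lemma:vlambdamu} gives after $\mu\to\infty$, and what \eqref{eq:ODEvlamda} records in the case $(a,b)=(0,\infty)$. The theorem as printed has the two cases interchanged (and states the ODE on $(0,+\infty)$ where it should say $(a,b)$); these are typos, so do not adjust your version to match it. Finally, your extra approximation $D=(a',b')\uparrow(a,b)$ is unnecessary --- the paper's lemma handles infinite endpoints directly, the indicator $\indic{a>-\infty}$ in the boundary data doing the bookkeeping --- and if you keep it you must additionally justify that the blow-up solutions on $(a',b')$ converge to the solution with the stated data on $(a,b)$, which is another comparison argument you have only gestured at.
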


Unfortunately, for generic parameters $a$ and $b$, this ODE has no closed form solution. This is also true for the Brownian case, but for $a = 0$ and $b = \infty$ Delmas~\cite[Lemma 7]{delmas2003computation} provides an explicit solution for all $\lambda \geq 0$. This is not possible in the stable case, but we can still derive precise asymptotics when $x$ is close to $0$; see Corollary~\ref{cor:v(x) expansion}. Moreover, we have some explicit expressions in the case $\lambda =0$, leading to:

\begin{theorem}\label{thm:range snake intro}
For any $x \in \R$,
\begin{equation}\label{eq:rangesnake intro}
\mathbb N_x \left[ 0 \in \mathcal R \right] = \left( \frac{\alpha + 1}{(\alpha - 1)^2} \, \frac{1}{x^2} \right)^{\frac{1}{\alpha -1}}  \quad \text{and} \quad \N^{(1)}_0[R^{\frac{2}{\alpha-1}}] = {\Gamma\left(1 - \frac{1}{\alpha}\right)} \left( \frac{\alpha + 1}{(\alpha - 1)^2} \right)^{\frac{1}{\alpha -1}}.
\end{equation}
\end{theorem}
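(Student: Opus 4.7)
The plan is to derive both identities from Theorem~\ref{theorem:main} combined with classical scaling and translation properties of the stable snake.

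For the first formula, fix $x > 0$. Since $t \mapsto Z_t$ is continuous, the range $\mathcal R$ is a bounded interval of $\R$ containing $x$, and hence $\{\mathcal R \not\subset (0,+\infty)\} = \{0 \in \mathcal R\}$ up to $\N_x$-null sets. Applying Theorem~\ref{theorem:main} with $\lambda=0$, $a=0$, $b=+\infty$, the function $v(x) := \N_x[0 \in \mathcal R]$ satisfies $\tfrac{1}{2} v'' = v^\alpha$ on $(0,+\infty)$ together with $v(x) \to +\infty$ as $x \downarrow 0$ and $v(x) \to 0$ as $x \to +\infty$. A power-law ansatz $v(x) = K x^{-\beta}$ forces $\beta = 2/(\alpha-1)$ and $K^{\alpha-1} = \beta(\beta+1)/2 = (\alpha+1)/(\alpha-1)^2$, which reproduces exactly the announced expression. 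Uniqueness of this solution comes from the first-integral trick: multiplying the ODE by $v'$ and using $v(+\infty)=v'(+\infty)=0$ gives $(v')^2 = \tfrac{4}{\alpha+1} v^{\alpha+1}$, a separable first-order equation whose only positive decreasing solution blowing up at $0^+$ is our power-law. The symmetry $Z \overset{d}{=} -Z$ of the Brownian labels conditional on the stable tree extends the formula to $x<0$, and $x=0$ is consistent since both sides are infinite.

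For the moment formula, I will use translation invariance to write $\N_0[R > r] = \N_{-r}[0 \in \mathcal R] = K\, r^{-2/(\alpha-1)}$ for all $r>0$, where $K = \bigl(\tfrac{\alpha+1}{(\alpha-1)^2}\bigr)^{1/(\alpha-1)}$. Then I disintegrate $\N_0$ along the total mass $\sigma$: the identity $\N_0[1-e^{-\lambda\sigma}] = \lambda^{1/\alpha}$ inverts to $\N_0(\sigma \in ds) = \frac{s^{-1-1/\alpha}}{\alpha\,\Gamma(1-1/\alpha)}\,ds$, and the standard scaling of the stable snake conditioned on $\sigma=s$ gives $R \overset{d}{=} s^{(\alpha-1)/(2\alpha)} R'$, where $R'$ is distributed as the maximum under $\Noo_0$. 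This produces
\begin{equation*}
K\, r^{-2/(\alpha-1)} = \int_0^\infty \Noo_0\!\left[R > r s^{-(\alpha-1)/(2\alpha)}\right] \frac{s^{-1-1/\alpha}}{\alpha\,\Gamma(1-1/\alpha)}\,ds.
\end{equation*}
The change of variable $u = r s^{-(\alpha-1)/(2\alpha)}$ together with the Fubini identity $\int_0^\infty u^{p-1}\, \Noo_0[R > u]\,du = \tfrac{1}{p} \Noo_0[R^p]$ at $p = 2/(\alpha-1)$ then reduces the right-hand side to $\Gamma(1-1/\alpha)^{-1}\, r^{-2/(\alpha-1)}\, \Noo_0[R^{2/(\alpha-1)}]$, and solving for the moment yields the announced value.

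The hardest part is this second step: the whole computation rests on correctly normalizing $\N_0(\sigma \in ds)$ and on the precise exponents of the scaling of $(H,Z)$ conditional on $\sigma$. Both inputs are standard (they stem from the snake construction of~\cite{LeGDuqMono} and the fact that $\psi(\lambda)=\lambda^\alpha$), but the matching of constants at the end demands careful bookkeeping.
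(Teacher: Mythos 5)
Your proposal is correct and follows essentially the same route as the paper: the first identity is the explicit $\lambda=0$ solution of the ODE from Theorem~\ref{theorem:main} (the paper's Equation~\eqref{eq:rangesnake}), and the second is obtained exactly as in Lemma~\ref{lem:almost Laplace transform range} and Corollary~\ref{cor:moment R}, by disintegrating $\mathbb N_0[R\geq r]$ over $\sigma$ via \eqref{eqn:Ito measure integrate s}, using the scaling $R\overset{d}{=}s^{(\alpha-1)/(2\alpha)}R'$, and changing variables plus integration by parts. Your added first-integral argument for uniqueness of the power-law solution is a nice explicit check of what the paper only asserts, and you correctly use the boundary conditions as stated in the paper's Corollary~\eqref{eq:ODEvlamda} (note that the conditions as printed in Theorem~\ref{theorem:main} itself have the finite/infinite cases swapped).
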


\bigskip

We also study the occupation measure of the snake around its minimal point. Here we are able to compute the expected occupation measure using connections with Bessel processes, extending results by Le Gall and Weill~\cite{legall-weill} in the Brownian case.
The snake occupation measure around its minimum is the random measure $\overline{\I}$ defined by
\begin{equation*}
\langle \overline{\I} , f \rangle = \int_0^\sigma dt \, f(Z_t - \inf_{s\in [0, \sigma]} Z_s). 
\end{equation*}
We establish a first moment formula for the occupation measure under \Ito excursion measure in terms of Bessel processes in Proposition~\ref{prop:momentI}.
This formula implies the following. 

\begin{theorem}\label{cor:I exp vol UB}
For every $\lambda >0$, one has
\[
\limsup_{\varepsilon \to 0} \varepsilon^{-\frac{2 \alpha}{\alpha -1}} \mathbb N_0 \left( (1 - e^{-\lambda \sigma}) \, \overline{\mathcal I} \left( [0,\varepsilon] \right) \right) < \infty.
\]
\end{theorem}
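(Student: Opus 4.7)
The plan combines the scaling invariance of the Itô excursion measure with the first-moment formula of Proposition~\ref{prop:momentI}.

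First, I would decompose $\mathbb N_0 = c_\alpha \int_0^\infty \sigma^{-1-1/\alpha}\, \mathbb N_0^{(\sigma)}\, d\sigma$ and use the scaling of the snake on the rescaled stable tree: under $\mathbb N_0^{(\sigma)}$, the random variable $\overline{\mathcal I}([0,\varepsilon])$ has the same law as $\sigma \cdot \overline{\mathcal I}([0,\varepsilon \sigma^{-(\alpha-1)/(2\alpha)}])$ under the probability measure $\mathbb N_0^{(1)}$. With $\beta = 2\alpha/(\alpha-1)$ and $g(u) := \mathbb N_0^{(1)}\bigl[\overline{\mathcal I}([0,u])\bigr]$, the change of variables $u = \varepsilon \sigma^{-1/\beta}$ turns the quantity of interest into
\[
\mathbb N_0\bigl((1-e^{-\lambda\sigma})\, \overline{\mathcal I}([0,\varepsilon])\bigr) = c'_\alpha\, \varepsilon^2 \int_0^\infty u^{-3}\, \bigl(1 - e^{-\lambda (\varepsilon/u)^\beta}\bigr)\, g(u)\, du.
\]

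Second, the main technical step is the polynomial estimate $g(u) \leq C u^\beta$ for $u \in (0,1]$. I would prove this using Proposition~\ref{prop:momentI}: expressing $g(u)$ as the integral on $[0,u]$ of the density provided by the first-moment formula (itself an expectation of a functional of a Bessel process), and then bounding that density by $C\, y^{\beta-1}$ for small $y$ via Bessel self-similarity. Heuristically this matches the $1/\beta$-Hölder scaling of $Z$: the set of times at which $Z$ is within $y$ of its minimum has Lebesgue measure of order $y^\beta$.

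Armed with the bound on $g$, I would estimate the integral of the first step by splitting into $u \in (0,\varepsilon]$, $u \in (\varepsilon,1]$ and $u > 1$. On the first two ranges one uses $g(u) \leq C u^\beta$, bounding the exponential factor by $1$ in the first range and by $\lambda(\varepsilon/u)^\beta$ in the second; on the third range one uses $g(u) \leq 1$ and the same linearization. Each of the three contributions to the integral is of order $\varepsilon^{\beta-2}$, so multiplying by the $\varepsilon^2$ prefactor gives the overall bound $\varepsilon^\beta = \varepsilon^{2\alpha/(\alpha-1)}$, which is exactly the content of the theorem.

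The main obstacle is the polynomial bound on $g$: this is where the full strength of Proposition~\ref{prop:momentI} is needed, together with the scaling and standard integrability properties of the underlying Bessel process. Note that the regularizing factor $(1 - e^{-\lambda \sigma})$ in the statement of the theorem, which is essential to ensure finiteness under the infinite measure $\mathbb N_0$, gets fully absorbed into the integration over $\sigma$ in the first step and plays no direct role in the pointwise estimate on $g$.
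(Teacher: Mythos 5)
Your reductions in the first and third steps are correct: the scaling identity and the change of variables do yield
$\mathbb N_0\bigl((1-e^{-\lambda\sigma})\,\overline{\mathcal I}([0,\varepsilon])\bigr) = c'_\alpha\, \varepsilon^2\int_0^\infty u^{-3}\bigl(1-e^{-\lambda(\varepsilon/u)^\beta}\bigr)g(u)\,du$ with $\beta = \frac{2\alpha}{\alpha-1}$, and, granting $g(u)\le Cu^\beta$, the three-range splitting gives the stated bound. The problem is that this reduction is essentially a restatement of the theorem: restricting the integral to $u\in[\varepsilon,2\varepsilon]$ and using monotonicity of $g$ shows that $g(\varepsilon)\le C'\varepsilon^\beta$ is \emph{equivalent} to the conclusion, so all of the content has been moved into your second step, and the route you sketch for that step does not work. \cref{prop:momentI} is a formula for the $(1-e^{-\lambda\sigma})$-weighted quantity under the infinite measure $\mathbb N_0$ --- i.e.\ precisely the quantity appearing in the theorem --- not for a density of the expected occupation measure under the conditioned probability $\mathbb N_0^{(1)}$. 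To extract a pointwise bound on $g$ from it you would have to undo the integration over $\sigma$, which is exactly the inverse of your first step: the argument is circular. A genuinely new ingredient (a first-moment formula under $\mathbb N_0^{(1)}$, or equivalently a tail bound for $\mathbb N_0^{(1)}(\underline W \ge -u)$ via re-rooting) would be needed, and neither is available in the paper. The heuristic that the set of times at which $Z$ is within $y$ of its minimum has Lebesgue measure of order $y^\beta$ is the statement to be proved, not an input.

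The plan also misses where the actual difficulty lies. In \cref{prop:momentI} the prefactor $\varepsilon^{\beta}$ is already extracted, so the theorem is exactly the assertion that the remaining Bessel integral
$\int_0^\infty da\, E^{(\frac{5\alpha-1}{\alpha-1})}_\varepsilon\bigl[R_a^{-\beta}\bigl(1-\exp\{-\int_0^a(\psi'(v_\lambda(R_t))-\tfrac{\alpha(\alpha+1)}{(\alpha-1)^2}R_t^{-2})\,dt\}\bigr)\bigr]$
stays bounded as $\varepsilon\to 0$. This is not a consequence of Bessel self-similarity alone: near $a=0$ the factor $R_a^{-\beta}$ (with $R_0=\varepsilon\to 0$) blows up and must be compensated by the smallness of the exponential bracket, which requires the quantitative expansion of $v_\lambda$ at $0^+$ from \cref{cor:v(x) expansion} (giving $0\le \psi'(v_\lambda(x)) - \tfrac{\alpha(\alpha+1)}{(\alpha-1)^2}x^{-2}\le C x^{2/(\alpha-1)}$) together with a two-regime estimate on $E_\varepsilon\bigl[R_t^{2/(\alpha-1)}E_{R_t}[R_{a-t}^{-\beta}]\bigr]$, comparing the bounds $C'(a-t)^{-\alpha/(\alpha-1)}$ and $C''t^{-1}$ and splitting the $t$-integral accordingly. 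This is the analysis the paper carries out directly on the formula of \cref{prop:momentI}, and it is the part your plan would still have to supply.
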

In the Brownian case, this estimate was used by Le Gall~\cite{LeGall2007BrownianMapTopological} to establish bounds on the volume of a ball around a typical point in the Brownian sphere. Unfortunately, our statement cannot be used in the same way in the stable case, since the point of the stable tree realising the minimum of the snake is not a typical point.

\bigskip

Finally, we prove that there are no common increase points for the snake and the underlying \Levy tree. This is also a crucial ingredient in many analyses of the Brownian sphere, in particular the programme to establish its topology \cite{LeGall2007BrownianMapTopological}. We refer the reader to \cite[Lemma 2.2]{LeGall2007BrownianMapTopological} for a proof in the Brownian case.

For $s \in [0,1]$ we say that $s$ is an \textbf{increase point} of the pair $(H,Z)$ if there exists $\epsilon>0$ such that $H_t \geq H_s$ and $Z_t \geq Z_s$ for all $t \in [s, (s+\epsilon) \wedge 1]$. The main result is summarised in the following statement.

\begin{theorem}\label{thm:no common increase points intro}
$\mathbb N_x$-almost everywhere, the pair $(H,Z)$ has no increase points.
\end{theorem}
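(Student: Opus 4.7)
The plan is first to reduce to showing that, for every fixed rational $q \in (0, \sigma)$, $\mathbb{N}_x$-almost surely there is no $s \in [0, q)$ satisfying simultaneously $H_s = \inf_{[s,q]} H$ and $Z_s = \inf_{[s,q]} Z$. This reduction suffices because any increase point $s$ with witness $\epsilon$ gives rise to such an $s$ for every rational $q \in (s, s+\epsilon) \cap (s, \sigma)$, and the collection of bad events is countable. After fixing $q$, I would parametrise the ancestral line from the root to the vertex $v_q$ of the tree by its height, writing $s = \tau(h)$ with $h = H_s \in [0, H_q]$, and invoke the standard spinal decomposition of the stable snake (see \cite{LeGDuqMono, marzouk-brownianstable}): conditionally on the underlying tree, the spine snake $\bar Z(h) := Z_{\tau(h)}$ is a standard Brownian motion started from $x$, and the subtrees grafted along the spine carry conditionally independent snakes, each started from the spine snake's value at its grafting height.

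Writing $\chi(h)$ for the infimum of snake values across all subtrees grafted at spine-heights in $(h, H_q]$, one obtains the decomposition
\[
\inf_{t \in [s, q]} Z_t = \min\Bigl(\inf_{h' \in [h, H_q]} \bar Z(h'),\; \chi(h)\Bigr).
\]
The first term on the right is already strictly below $\bar Z(h) = Z_s$ whenever $h$ is not a right-increase point of $\bar Z$. The set $\mathcal I$ of right-increase points of the Brownian motion $\bar Z$ on $[0, H_q]$ is, after time-reversal and by L\'evy's identity, distributed as the zero set of a Brownian motion---closed, non-empty, Lebesgue-negligible, of Hausdorff dimension $1/2$. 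The task therefore reduces to proving that $\mathbb{N}_x$-almost surely $\chi(h) < \bar Z(h)$ for every $h \in \mathcal I$.

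The remaining and genuinely delicate step will use the rich subtree structure of the stable spine. On a small window $(h, h+\delta]$ the Brownian modulus of continuity for $\bar Z$ forces $0 \leq \bar Z(h') - \bar Z(h) \leq C \sqrt{\delta \log(1/\delta)}$, while for $\alpha \in (1,2)$ the spine carries a Poisson family of grafted subtrees whose intensity is governed by the L\'evy measure of the underlying $\alpha$-stable subordinator. Combined with the scaling that a subtree of Lebesgue mass $\ell$ carries a snake oscillation of order $\ell^{(\alpha-1)/(2\alpha)}$, at least one subtree grafted above $h$ typically sends the snake strictly below $\bar Z(h)$. Promoting this typical statement to a uniform one over the entire fractal set $\mathcal I$ is the principal obstacle: it requires a dyadic covering of $\mathcal I$ together with a Borel--Cantelli-style argument exploiting independence across disjoint spine windows. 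The stable case differs from the Brownian one treated in \cite[Lemma 2.2]{LeGall2007BrownianMapTopological} precisely because each branch point on the spine of a stable tree with $\alpha < 2$ supports a countably infinite family of subtrees rather than a single one, so the combined effect on $\chi$ must be controlled simultaneously over all $h \in \mathcal I$.
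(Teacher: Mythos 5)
Your overall skeleton is sound and is in fact close in spirit to the paper's argument: the reduction to countably many rational cut-off times is valid, the identification of the candidate set of bad spine heights $\mathcal I=\{h: \bar Z(h)=\inf_{[h,H_q]}\bar Z\}$ as a L\'evy-type record set of dimension $1/2$ is correct (though these are not ``increase points'' of $\bar Z$ --- Brownian motion has none --- but rather future-minimum record times), and the exponent bookkeeping would work out: a cover of $\mathcal I$ at scale $\delta$ needs about $\delta^{-1/2}$ windows, the Brownian oscillation gives $\eta\approx\delta^{1/2}$, and the per-window failure probability turns out to be of order $\eta^{\frac{\alpha+1}{\alpha-1}}$, whose exponent exceeds $1$ so the union bound closes. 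The problem is that this per-window estimate --- the probability that \emph{every} subtree snake grafted along a spine segment stays above a level $\eta$ below the spine value --- is exactly the content you defer as ``the principal obstacle,'' and it is the entire technical heart of the proof. Without it you have a plan, not a proof.

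Concretely, two things are missing. First, your heuristic ``a subtree of mass $\ell$ has oscillation $\ell^{(\alpha-1)/(2\alpha)}$'' must be replaced by the exact hitting probability $\N_y(0\in\mathcal R)=\bigl(\tfrac{\alpha+1}{(\alpha-1)^2}y^{-2}\bigr)^{1/(\alpha-1)}$ of \cref{thm:range snake intro}, fed into a Poisson computation over the subtrees grafted along the spine; in the stable case the grafting intensity is governed by the dual exploration process conditioned on the spine having height $\geq\delta$, whose law is \emph{not} simply the L\'evy measure of a stable subordinator --- one needs the Williams-type decomposition of \cite{abraham2009williams} together with \cite[Theorem 4.6.2]{LeGDuqMono}, precisely because $H$ is not Markovian. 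Second, your appeal to ``independence across disjoint spine windows'' ignores that the event $h\in\mathcal I$ and the event $\chi(h)\geq\bar Z(h)$ are strongly dependent: both constrain the spine Brownian motion to stay above $\bar Z(h)$, and the subtree hitting probabilities depend on the spine positions. The joint estimate is an expectation of the form $\Pi_\eta\bigl[\indic{\xi([0,\delta])\subset(0,\infty)}\exp\{-\tfrac{\alpha+1}{(\alpha-1)^2}\int_0^{\delta}\xi_r^{-2}\,dr\}\bigr]$, which is evaluated via the Bessel absolute-continuity relation of \cref{prop:Bessel} and yields the exponent $\tilde\nu+\tfrac12=\tfrac{\alpha+1}{\alpha-1}$. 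None of this is present in your sketch, so as written the argument has a genuine gap at its decisive step.
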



\textbf{Acknowledgements.} We would like to thank Marie Albenque and Meltem \"Unel for lots of helpful conversations, and Nicolas Curien for pointing out an error in an earlier version of the paper. EA and LM were supported by the ANR grant ProGraM (ANR-19-CE40-0025).

\section{Prerequisites on stable trees and snakes}
\label{section:stable-trees}

\subsection{Stable trees}

\subsubsection{Stable \Levy processes and the \Ito excursion measure}\label{sctn:Levy and Ito}
We consider a spectrally positive stable process $X$ with index $\alpha \in (1,2)$. The law of $X$ started at $0$ is denoted by $P$. This process is characterised by its Laplace transform and the fact that it has no negative jumps; for $\lambda >0$ and $t \ge 0$ we will assume that $X$ is normalised so that
\begin{equation}\label{eqn:Levy process Laplace transform}
\E{ \exp \{- \lambda X_t \}} = \exp \{-t \lambda^\alpha \}.
\end{equation}
(See \cite[Sections VII and VIII]{BertoinLevy} for more background.)

Henceforth we write $\psi(\lambda) = \lambda^{\alpha}$ to denote this Laplace exponent. The function $\psi$ is known as the \textbf{branching mechanism}. By the L\'evy--Khinchin formula, $\psi$ can equivalently be written in the form
\begin{equation}\label{eqn:psi Levy Khinchin}
\psi ( \lambda) = \int_0^{\infty} (e^{-\lambda r} -1 + \lambda r) \pi (dr) = C_{\alpha}\lambda + \int_0^{\infty} (e^{-\lambda r} -1 + \lambda r \indic{y \leq 1}) \pi (dr),
\end{equation}
where $\pi$ is the \textbf{jump measure} of $X$ and $-C_{\alpha}$ is the \textbf{drift coefficient}. The fact that $\psi(\lambda) = \lambda^{\alpha}$ entails that 
\begin{equation}\label{eqn:pi and C def}
C_{\alpha}=\frac{\alpha-1}{\Gamma (2-\alpha)} \quad \text{ and } \quad \pi (dr) = \alpha C_{\alpha} r^{-\alpha-1} dr.
\end{equation}
In the usual terminology, this means that the process $X$ corresponds to the \textbf{\Levy triple} $(0, -C_{\alpha}, \pi)$.

To define a stable tree, we will in fact use a normalised L\'evy excursion rather than a L\'evy process. Let $X$ be an $\alpha$-stable spectrally positive L\'evy process, normalised as in \eqref{eqn:Levy process Laplace transform}, and let $I_t = \inf_{s \in [0,t]} X_s$ denote its running infimum process. Define $g_1$ and $d_1$ by
\begin{align*}
g_1 &= \sup \{ s \leq 1: X_s = I_s \} \\
d_1 &= \inf \{ s > 1: X_s = I_s \}.
\end{align*}

Note that $X_{g_1} = X_{d_1}$ almost surely since $X$ almost surely has no jump at time $g_1$ and $X$ has no negative jumps. We define the normalised excursion $X^{\text{exc}}$ of $X$ above its infimum at time $1$ by
\[
X_s^{\text{exc}} = (d_1 - g_1)^{\frac{-1}{\alpha}} (X_{g_1 + s(d_1 - g_1)} - X_{g_1})
\]
for every $s \in [0,1]$. Then $X^{\text{exc}}$ is almost surely an $\alpha$-stable \cadlag function on $[0,1]$ with $X^{\text{exc}}(s)>0$ for all $s \in (0,1)$, and $X_0^{\text{exc}}=X_1^{\text{exc}}=0$.

We also take this opportunity to introduce the \Ito excursion measure. For full details, see \cite[Chapter IV]{BertoinLevy}, but the measure is defined by applying excursion theory to the process $X - I$, which is strongly Markov and for which the point $0$ is regular for itself. We normalise local time so that $-I$ denotes the local time of $X - I$ at its infimum, and let $(g_j, d_j)_{j \in \mathcal{I}}$ denote the excursion intervals of $X - I$ away from zero. For each $i \in \mathcal{I}$, the process $(e^i)_{0 \leq s \leq d_i-g_i}$ defined by $e^i(s) = X_{g_i + s} - X_{g_i}$ is an element of the excursion space
\[
E = \bigcup_{l > 0} D^{\text{exc}}([0,l], \R^{\geq 0}),
\]
where for every $l >0$, $D^{\text{exc}}([0,l], \R^{\geq 0})$ is the space of all \cadlag nonnegative functions $f$ such that $f(0)=f(l)=0$. We let $\sigma (e) = \sup \{s>0: e(s)>0\}$ denote the \textbf{lifetime} (duration) of the excursion $e$. It was shown in \cite{ItoPP} that the measure on $\mathbb R_+ \times E$
\[
\sum_{i \in \mathcal{I}} \delta (-I_{g_i}, e^i)
\]
is a Poisson point measure of intensity $dt N(de)$, where $N$ is a $\sigma$-finite measure on the set $E$ known as the \textbf{\Ito excursion measure}.

The measure $N(\cdot)$ inherits a scaling property from that of of $X$. Indeed, for any $\lambda > 0$ we define a mapping  $\Phi_{\lambda}: E \rightarrow E$ by  $\Phi_{\lambda}(e)(t) = \lambda^{\frac{1}{\alpha}} e(\frac{t}{\lambda})$, so that $N \circ \Phi_{\lambda}^{-1} = \lambda^{\frac{1}{\alpha}} N$ (e.g. see \cite{WataIto}). It then follows from the results in \cite[Section IV.4]{BertoinLevy} that we can uniquely define a set of conditional measures $(\Ns, s>0)$ on $E$ such that:
\begin{enumerate}[(i)]
\item For every $s > 0$, $\Ns( \sigma=s)=1$.
\item For every $\lambda > 0$ and every $s>0$, $\Phi_{\lambda}(\Ns) = N^{(\lambda s)}$.
\item For every measurable $A \subset E$,
\begin{equation}\label{eqn:Ito measure integrate s}
N(A) = \int_0^{\infty} \frac{\Ns(A)}{\alpha \Gamma(1 - \frac{1}{\alpha}) s^{\frac{1}{\alpha}+1}} ds.
\end{equation}
\end{enumerate}

$\Ns$ is therefore used to denote the law $N( \cdot | \sigma = s)$. The probability distribution $\No$ coincides with the law of $\X$ constructed above.

\subsubsection{Stable height processes and exploration processes}

It is well-known (e.g. see \cite[Figure 1]{LeGDuqMono}) that discrete plane trees can be coded by several different functions. The same is true in the continuum; in the case of $\alpha$-stable trees, we first code using a spectrally positive $\alpha$-stable \Levy process or excursion $X$ (this plays the same role as the \L ukasiewicz path in the discrete setting). The \textbf{height function} $H$ can then be defined by setting it to be the process defined for $t \geq 0$ by
\begin{equation}\label{eqn:height def}
H_t = \lim_{\epsilon \rightarrow 0} \frac{1}{\epsilon} \int_0^t \indic{X_s < I_s^t + \epsilon} ds,
\end{equation}
where $I_{s,t} = \inf_{s \leq r \leq t} X_r$. For each $t \geq 0$, this limit exists in probability by \cite[Lemma 1.2.1]{LeGDuqMono}. Moreover, it follows from \cite[Theorem 1.4.3]{LeGDuqMono} that $H$ almost surely has a continuous modification, and we will assume henceforth that $H$ is indeed continuous.

The height function is not Markovian in general and to overcome this difficulty Le Gall and Le Jan~\cite{le1998branching} introduced the exploration process, a Markovian random measure that encodes the height process. Let $M_f(\R_+)$ denote the set of finite measures on $\R_+$. The \textbf{exploration process} $(\rho_t)_{t\geq 0}$ is a process such that for each $t \geq 0$, $\rho_t$ is the measure in $M_f(\R_+)$ satisfying
\begin{align}\label{def:exploration process}
\langle\rho_t, f\rangle = \int_0^t f(H_s) d_s (I_{s,t})
\end{align}
for all bounded measurable functions $f:\R_+ \to \R$.
Note that this implies that
\begin{align}\label{eqn:rhot 1}
\langle\rho_t, 1\rangle = I_{t,t} - I_{0,t} = X_t - I_t.
\end{align}

Note that it follows from the definition that for each $t \geq 0$, $\text{supp } \rho_t = [0,H_t]$. In particular this means that $H$ can be defined as a functional of $\rho$; for a given realisation $\rho'$ we denote the associated height process by $H(\rho')$. Moreover, by \cite[Proposition 1.2.3]{LeGDuqMono}, the process $(\rho_t)_{t \geq 0}$ is a \cadlag strong Markov process, viewed as a measure taking values in $M_f(\R_+)$ equipped with the topology of weak convergence. In particular this means that the notion of the exploration process started from an initial distribution $\mu$ (provided $\mu$ is a feasible candidate for the exploration process) is well-defined. We denote this law by $\bPb_{\mu} (d \rho)$. This law can also be constructed explicitly using pruning and concatenations as in \cite[Section 2.1]{riera2022structure}. In particular the construction of \cite[Section 2.1]{riera2022structure} makes it clear that this is possible whenever $\supp \mu$ is of the form $[0,a]$ for some $a\in [0, \infty)$, and that $\mu$ is non-atomic.

Finally we introduce the notion of the dual exploration process. Note that it follows from \eqref{def:exploration process} that $\rho_t$ can also be defined by
\begin{align*}
\rho_t(dr) = \sum_{0 < s \leq t: X_{s-} \leq I_{s,t}} (I_{s,t}-X_{s-})  \delta_{H_s}(dr)
\end{align*}
for all $t>0, r \in [0,H_t]$. The \textbf{dual exploration process} is similarly defined by
\begin{align}\label{eqn:dual exp process}
{\hat{\rho}}_t(dr) = \sum_{0 < s \leq t: X_{s-} \leq I_{s,t}} (X_{s} - I_{s,t})  \delta_{H_s}(dr).
\end{align}
The pair $(\rho_t, {\hat{\rho}}_t)_{t \geq 0}$ is also a strong Markov process.

The height function appearing in \eqref{eqn:height def} in fact codes a whole forest of trees. We will sometimes instead code a single tree by replacing $X$ with $\X$ in \eqref{eqn:height def}. This in fact corresponds to the law of $H$ under the measure $\No$. By scaling, this means that we can also make sense of it under $\Ns$ for any $s>0$, and therefore under $N$ using point (iii) of the previous subsection. The pair $(\rho_t, {\hat{\rho}}_t)_{t\geq 0}$ can be similarly defined under $\No$ or $N$. Under $N$, we have that (recall that $\sigma$ denotes the lifetime of the excursion)
\begin{equation} \label{eq:reverserho}
(\rho_t, {\hat{\rho}}_t)_{t\geq 0} \overset{(d)}{=} ({\hat{\rho}}_{(\sigma - t)-}, \rho_{(\sigma - t)-})_{t\geq 0}.
\end{equation}

\subsubsection{Stable trees}\label{sctn:stable tree def}

To define the stable tree under $\No$, we first define $H$ using \eqref{eqn:height def} but with $X$ replaced by $\X$, then for $s<t$ set
\begin{equation}\label{eqn:min st def}
m_{s,t} =  \inf_{s \wedge t \leq r \leq s \vee t} H_r.
\end{equation}
We then define a pseudodistance on $[0,1]$ by
\begin{equation}\label{eqn:distance from height}
\dt(s,t) = H_s + H_t - 2m_{s,t}
\end{equation}
whenever $s \leq t$. We then define an equivalence relation on $[0,1]$ by saying $s \sim t$ if and only if $\dt(s,t) = 0$, and set $\Ta$ to be the quotient space $([0,1]/ \sim, \dt)$. We also define a measure $\lambda$ on $\Ta$ as the image of Lebesgue measure on $[0,1]$ under the quotient operation. 

It follows from the construction that $\Ta$ is an $\R$-tree in the sense of \cite{dress1996t}, essentially meaning that there is a unique path between every pair of points (see \cite[Definition 2.1]{DLG05} for the precise definition).

Properties of $\Ta$ are encoded by the process $\X$. Letting $p: [0, 1] \rightarrow \Ta$ be the canonical projection, we have a distinguished vertex $\rho = p(0)$ which is the root of $\Ta$.

The \textbf{multiplicity} of a vertex $u \in \Ta$ is defined as the number of connected components of $\Ta \setminus \{u\}$. Vertex $u$ is called a \textbf{leaf} if it has multiplicity one, and a \textbf{branch point} if it has multiplicity at least three. It can then be shown that $\Ta$ only has branch points with infinite multiplicity, called \textbf{hubs}, and that $u \in \Ta$ is a hub if and only if there exists $s \in [0, 1]$ such that $p(s)=u$ and $\Delta_s := \X_s - \X_{s^-}>0$. See for instance Theorem 4.6 and Theorem 4.7 of \cite{DLG05}. The quantity $\Delta_s$ gives a measure of the number of children of $u$. 

For fixed $t \geq 0$, the quantity appearing in \eqref{eqn:rhot 1} can be interpreted as the sum of the sizes of the hubs appearing on the right hand side of the branch from the root to $p(t)$. Similarly $\langle{\hat{\rho}}_t, 1\rangle$ can be interpreted as the sum of the sizes of the hubs appearing on the left hand side of the branch from the root to $p(t)$.

We can similarly define $\Ta$ under $N$, rather than $\No$, by replacing $\X$ with an excursion defined under $N$ in the definition of $H$, or similarly define a forest of trees by retaining $X$ in \eqref{eqn:height def} (each excursion of $X-I$ above its infimum then codes a single tree). It is a well-known fact that the genealogy of $\Ta$ encodes a continuous-state branching process with branching mechanism $\psi$ (but we will not use this connection and simply refer to \cite{LeGDuqMono} for further background).

\subsection{Stable snake with Brownian spatial displacements}

\subsubsection{Definition} \label{section:stablesnake}
Here we give a brief introduction to \Levy snakes in the particular case of snakes on stable trees with Brownian spatial displacements. We will often simply refer to this as a ``stable snake" in what follows. A full introduction in the more general setting of \Levy trees is given in \cite[Section 4]{LeGDuqMono}; we refer there for further details and background.

Informally, a stable snake is simply a real-valued stochastic process indexed by a stable tree. Just as we had to introduce the exploration process as a Markovian version of the height process, it will be more convenient to keep track of some of the history of this stochastic process and we therefore in fact consider a path-valued process. In particular, we let $\W$ denote the set of continuous real-valued functions defined on a compact interval of the form $[0, \zeta]$ for some $\zeta>0$. Given $w \in \W$, we denote its domain by $[0, \zeta_w]$ and set $\widehat{w} = w(\zeta_w)$. We call $\zeta_w$ the \textbf{lifetime} of $w$ and $\widehat{w}$ the \textbf{tip} of $w$.

We define a topology on $\W$ using the distance function
\[
d_{\W}(w, w') = |\zeta_w - \zeta_{w'}| + \sup_{r \geq 0} |w(r \wedge \zeta_{w}) - w'(r \wedge \zeta_{w'})|.
\]
Before defining the stable snake, we first define a snake with Brownian spatial displacements driven by a deterministic continuous function $h:\R_+ \to \R_+$ with initial condition $w_0: [0, h(0)] \to \R_+$ as a random variable taking values in $\W$, or in other words in the space of path-valued processes (one should have in mind that $h$ plays the role of a height function of a deterministic forest, and the snake is obtained by adding random spatial displacements). For every $x \in \mathbb R$, we denote by $\Pi_x$ the distribution of a standard linear Brownian motion $\xi = (\xi_t)_{t \geq 0}$ started at $x$. It is shown in \cite[Section 4.1.1]{LeGDuqMono} that for any such choice of $(h, w_0)$ there exists a unique probability measure on $\W$ such that the following three points hold.  
\begin{enumerate}
\item $\zeta_{W_s} = h(s)$ for all $s \in [0, \sigma]$.
\item \label{item:snake-consistency} For all $s, s' \geq 0$, we have that $W_s(t) = W_{s'}(t)$ for all $t \leq m_{s',s}$.
\item Conditionally on $W_{s'}$, the path $W_s:[0, \zeta_s] \to \R$ is such that $(W_s(m_{s',s}+t) - W_s(m_{s',s}))_{t\in [\zeta_s - m_{s',s}]}$ has the same law as the Brownian motion $(\xi_t)_{t\in [0,\zeta_s - m_{s',s}]}$ under $\Pi_{W_{s'}(m_{s',s})}$.
\end{enumerate}

In other words, conditionally on $W_{s'}$, the path $W_s$ satisfies point~\ref{item:snake-consistency} for $t \leq m_{s, s'}$ (which informally corresponds to the snake along the path to the most recent common ancestor of $p(s)$ and $p(s')$), and for $t > m_{s, s'}$ evolves as an independent linear Brownian motion (which corresponds to the rest of the branch to $p(s)$). Given $h$ and $w_0$ as above we let $Q_{w_0}^{h}$ denote the law of this process.

The \textbf{stable snake with Brownian spatial displacements} is the pair $(\rho, W)$ obtained by replacing $h$ with the random height function $H$ defined from a \Levy process $X$ as in \eqref{eqn:height def}. In particular, since $H$ can be written as a function of the exploration process, we consider the pair $(\rho, W)$ with initial condition $(\mu, w)$ to be a random process with law denoted $\Pb_{\mu, w}$ defined by
\begin{align}\label{eqn:law of stable snake def}
\Pb_{\mu, w}(d\rho, dW) = \bPb_{\mu} (d \rho) Q_w^{H(\rho)}(dW).
\end{align}
(Here we assume $\mu \in M_f(\R_+)$ and $w$ clearly needs to be compatible with $\mu$ in the sense that $\supp \mu = [0,\zeta_w]$.) It is shown in \cite[Theorem 4.1.2]{LeGDuqMono} that the process $(\rho_s, W_s)_{s \geq 0}$ is a strong Markov process under the law $\Pb_{\mu, w}$, with respect to $(\F_{s+})_{s \geq 0}$, where $(\F_s)_{s \geq 0}$ is the canonical filtration on the space $\mathbb{D}(\R_+, M_f(\R_+) \times \W)$, the space of \cadlag functions from $\R_+$ to $M_f(\R_+) \times \W$ with respect to the aforementioned topologies and the product topology on the latter space.
We will also use the notation
\begin{equation} \label{eq:tipDef}
\Wt_s = W_s(\zeta_s)
\end{equation}
to denote the tip of the snake at time $s$. 

We will sometimes refer to this construction as simply the ``the snake" or ``the stable snake" in this article. We will also denote the case $\Pb_{0,x}$ by $\Pb_{x}$. In addition, $\Pb^*_{\mu, w}$ will refer to the law of $(\rho, W)$ under $\Pb_{\mu, w}$ but killed when $\rho$ first hits $0$ (in practical terms, this corresponds to starting the process ``partway" through a tree/snake pair and stopping at the moment when the entire tree and snake have been discovered).

\subsubsection{Excursion measures for snakes}

Although the notation $\Pb_{\mu, w}$ always refers to the law of the snake as defined in \eqref{eqn:law of stable snake def}, we will also sometimes need to consider the law of a snake in which $\rho$ is first sampled under $N$ or $\No$ with initial condition consisting of the empty trajectory for which the snake takes initial value $x \in \R$. Informally, under $\No$ this is the law of a stable snake on a single stable tree of total mass $1$, started from the point $x$. Under $N$ this is the law of a stable snake on a single stable tree of total mass sampled according to the It\^o measure, started from the point $x$. We denote these excursion measures for snakes by $\N_x$ and $\Noo_x$, so that
\begin{align}\label{eqn:Ito snake def}
\N_x (d \rho, d W) = N_0 (d \rho) Q_x^{H(\rho)}(dW).
\end{align}
$\Noo_x$ is defined similarly by replacing $N$ with $\No$ above. It is explained in \cite[Section 2.4]{riera2022structure} that the point $(0,x)$ is regular and recurrent for the Markov process $(\rho, W)$, and moreover that the process $(-I_t)_{t\geq 0}$ is a local time at $0$ for this process, and hence this definition does indeed make sense as the excursion measure of $(\rho, W)$ away from $(0,x)$ associated with the local time $-I$.

In other words, by excursion theory, it moreover follows that, if $(\alpha_i, \beta_i)_{i \in \mathcal{I}}$ denote the excursion intervals of $(\rho, W)$ away from $(0,x)$, with corresponding subtrajectories $(\rho^{(i)}, W^{(i)})$, then under $\Pb_x$, the measure
\[
\sum_{i \in \mathcal{I}} \delta (-I_{\alpha_i}, \rho^{(i)}, W^{(i)})
\]
is a Poisson point measure with intensity $\indic{[0, \infty)}(\ell) d \ell \N_x(d \rho, d W)$ (where $d \ell$ denotes Lebesgue measure).

\bigskip

Note that, under $\mathbb N_x$, the function $(\Wt_s)_{s \in [0, \sigma]}$ can also be viewed as a function of the tree $\Ta$ coded by the height function $H(\rho)$ driving the snake that appears in \eqref{eqn:law of stable snake def}. We will follow the convention of \cite{LeGall2007BrownianMapTopological} and denote this function by $(Z_a)_{a \in \Ta}$, so that $Z_a = \Wt_t$ for any $a \in \Ta$ such that $a = p(t)$. We will sometimes abuse notation and use interchangeably $Z_a$, $Z_t$ and $\Wt_t$.
In this setting, we define the \textbf{snake occupation measure} as the random measure $\mathcal I$ satisfying
\begin{equation}\label{eqn:ISE def}
\langle \mathcal I , f \rangle = \int_{\Ta} \lambda(da) \, f(Z_a) = \int_0^\sigma dt \, f(\Wt_t). 
\end{equation}
When $\alpha = 2$, the measure $\mathcal I$ is the so-called Integrated Superbrownian Excursion. The measure $\mathcal I$ is supported on the range of the stable snake, denoted by
\begin{equation} \label{eq:rangeDef}
\mathcal R := \left[ \inf_{a \in \Ta} Z_a, \sup_{a \in \Ta} Z_a \right].
\end{equation}

\bigskip

The measure $\N_x$ also inherits a scaling property from that of of $N(\cdot)$ and in addition the Brownian scaling property. For any $\lambda > 0$ we again define a mapping  $\Phi_{\lambda}: (\rho, W) \to (\rho^{\lambda}, W^{\lambda})$ defined by 
\begin{align}\label{eqn:Ito scaling}
\rho^{\lambda}_t (dr) = \lambda^{\frac{1}{\alpha}} \rho_{t/\lambda} (d(r/\lambda^{1-\frac{1}{\alpha}})), \qquad W_t^{\lambda}(s) = \lambda^{\frac{1}{2}\left(1-\frac{1}{\alpha}\right)} W_{t/\lambda}(s/\lambda^{1-\frac{1}{\alpha}})
\end{align}
so that now the law of $(\rho^{\lambda}, W^{\lambda})$ under $\N_x$ is equal to $\lambda^{1/\alpha}\N_{\lambda^{\frac{1}{2}\left(1-\frac{1}{\alpha}\right)}x}$.

Note also that we can use \eqref{eqn:Ito measure integrate s} and \eqref{eqn:Ito scaling} to calculate that 
\begin{align}\label{eqn:Laplace sigma}
\mathbb N_y \left[ 1 - e^{-\lambda \sigma} \right] = \lambda^{1/ \alpha}
\end{align}
\bigskip

We will use a property of stable snakes known as \textbf{uniform re-rooting invariance}. To define this under $\N_0$ for an excursion with lifetime $\sigma>0$, we follow the presentation of \cite[Section 2.3]{legall-weill} for the Brownian case. As was the case there, we first need to define the notion of a stable tree and of a snake rooted at a time point $s \in [0, \sigma]$. To this end, given $\sigma>0$ first set
\begin{equation*}
s \oplus r = \begin{cases}
s+r &\text{ if } s+r \leq \sigma, \\
s+r - \sigma &\text{ if } s+r > \sigma.
\end{cases}
\end{equation*}
We then define the re-rooted lifetime and snake-tip processes by
\begin{align}\label{eqn:rerooted height and snake tip}
\zetas_r &= \zeta_s + \zeta_{s \oplus r} - 2\inf_{u \in [s \wedge (s \oplus r), s \vee (s \oplus r)]} \zeta_u \qquad \text{ and } \qquad \Wts_r = \Wt_{s \oplus r} - \Wt_s
\end{align}
for all $r \in [0, \sigma]$. Note that the entire sequence of snake trajectories can be recovered directly from \eqref{eqn:rerooted height and snake tip} via
\begin{align*}
\Ws_r (t) = \Wts_{\sup\{u \leq r: \zetas_u=t\}},
\end{align*}
so \eqref{eqn:rerooted height and snake tip} does indeed suffice to define the process $((\Ws_r (t))_{t \in [0,\zetas_r]})_{ r \in [0,\sigma]}$.

Informally, $(\zetas_r)_{r \geq 0}$ codes a stable tree rooted at the vertex previously labelled $s$, and $\Wts_r$ codes the spatial displacements along the branches of the new tree from the new root to the vertex which now has label $r$.

The re-rooting invariance statement is as follows.

\begin{proposition}\label{prop:uniform rerooting}
For every non-negative function $F$ on $\R_+ \times C(\R_+, \W)$ it holds that
\[
\N_0 \left( \int_0^{\sigma} F(s, \Ws) ds \right) = \N_0 \left( \int_0^{\sigma} F(s, W) ds \right).
\]
\end{proposition}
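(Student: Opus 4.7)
The plan is to factor the re-rooting invariance into two independent pieces: (i) re-rooting invariance of the height function $H$ alone under $\N_0$, and (ii) re-rooting invariance, conditionally on $H$, of the Brownian spatial component of the snake. This follows the strategy used by Le Gall and Weill in the Brownian case and exploits the disintegration~\eqref{eqn:Ito snake def}.

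For step (i), I would invoke the known uniform re-rooting invariance of the stable height process: for every non-negative measurable $G$,
\[
\N_0 \left( \int_0^{\sigma} G(s, H^{[s]}) ds \right) = \N_0 \left( \int_0^{\sigma} G(s, H) ds \right),
\]
where $H^{[s]}_r = H_s + H_{s \oplus r} - 2 \inf_{u \in [s \wedge (s \oplus r), s \vee (s \oplus r)]} H_u$ is the height function re-rooted at time $s$. This is Duquesne's uniform re-rooting invariance for stable Lévy trees, which is stated under $\Noo$ and transferred to $\N_0$ via the disintegration~\eqref{eqn:Ito measure integrate s} and the scaling~\eqref{eqn:Ito scaling} (with which the re-rooting operation commutes in a suitable sense).

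For step (ii), I would condition on the exploration process $\rho$ (which determines $H$). The defining properties of the snake — in particular item~\ref{item:snake-consistency} together with the Brownian increment property — identify $W$, conditionally on $H$, as Brownian motion indexed by the $\R$-tree $\Ta$ coded by $H$, started from the value carried at the root. The crucial observation is that an $\R$-tree has an intrinsic metric structure that is independent of the choice of root: re-rooting $\Ta$ at $p(s)$ yields the same $\R$-tree, just with a different distinguished vertex. Consequently, the centred family $\widehat W^{[s]}_r = \widehat W_{s \oplus r} - \widehat W_s$ is exactly Brownian motion indexed by the re-rooted tree started from $0$ at the new root, and the full re-rooted trajectories $(W^{[s]}_r(t))_{t \in [0, \zetas_r]}$ recovered from it via the formula below~\eqref{eqn:rerooted height and snake tip} have, conditionally on $H^{[s]}$, the law $Q_0^{H^{[s]}}$. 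Combining (i) and (ii) via Fubini yields the claim.

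The main technical obstacle is step (ii), specifically verifying carefully that the conditional law of the snake given $H$ can be realised as a functional of the unrooted $\R$-tree coded by $H$, so that re-rooting acts as an isometry of the underlying metric space and leaves the conditional law invariant. At the level of finite-dimensional marginals along the snake this is a standard consequence of the description of the snake tips as a centred Gaussian field with covariance $\text{Cov}(\widehat W_s, \widehat W_t) = m_{s,t}$; the continuity of paths and consistency condition~\ref{item:snake-consistency} then let one reconstruct the whole process $W$ from its tips and $H$, and the identity passes through after integrating against $s$.
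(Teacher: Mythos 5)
Your proposal is correct and follows essentially the same route as the paper: first invoking the uniform re-rooting invariance of the stable height process (\cite[Proposition 4.8]{DLG05}), then observing that, conditionally on the tree, the Gaussian covariance structure of the snake tips is preserved under re-rooting, exactly as in \cite[Theorem 2.3]{legall-weill}. The extra care you take in identifying the conditional law as Brownian motion indexed by the unrooted $\R$-tree is a valid way of packaging the paper's second step.
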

\begin{proof}
This was proved in \cite[Theorem 2.3]{legall-weill} via two steps:
\begin{enumerate}
\item Firstly showing that the process $(\zeta_s)_{s \in [0, \sigma]}$ (coding the underlying tree) satisfies uniform re-rooting invariance; in the stable case this has been shown \cite[Proposition 4.8]{DLG05}.
\item Then observing the Gaussian increments and covariance structure of the snake are preserved under re-rooting in order to extend the invariance of $(\zeta_s)_{s \in [0, \sigma]}$ to joint invariance of the pair $(\zeta_s, W_s)_{s \in [0, \sigma]}$.
\end{enumerate}
The proof in the stable case therefore follows by exactly the same logic and we refer to \cite[Theorem 2.3]{legall-weill} for full details.
\end{proof}

\subsubsection{Spinal decompositions}\label{sctn:spinal decomp}
In this section we present an important spinal decomposition formula, which will allow us to express certain integral functionals of the stable snake under $\N_0$ using a decomposition over subtrees and subtrajectories along a branch to a uniform point in the underlying tree.

We start by giving a Poisson process decomposition of subtrees from a deterministic initial condition. Recall that under $\mathbb P_{\mu,w}^*$, the process $Y_t = \langle\rho_t,1\rangle$ is distributed as the underlying stable process $X_t - I_t$ started at $\langle\mu,1\rangle$ and stopped when it first hits $0$. Also recall that the height process can be recovered by $\mathrm{supp} \, \rho_t = [0,H_t]$. Set $K_t = \inf_{s \leq t} Y_s$ and denote by $(\alpha_i,\beta_i)_{i \in I}$ the excursion intervals of $Y_t - K_t$ away from $0$. For every $i \in I$, we set $h_i = H_{\alpha_i} = H_{\beta_i}$ and define the pair $(\rho^i,W^i)$ by
\begin{equation}
\begin{cases}
\langle \rho^i_s , f \rangle = \int_{h_i}^\infty \rho_{\alpha_i + s}(dr) \, f(r - h_i) & \text{for $0 \leq s \leq \beta_i - \alpha_i$,}\\
\rho^i_s = 0 & \text{for $ s > \beta_i - \alpha_i$,}
\end{cases}
\end{equation}
and
\begin{equation}
\begin{cases}
W^i_s (t) = W_{\alpha_i + s}(h_i + t), \, \zeta^i_s = H_{\alpha_i + s} - h_i & \text{for $0 < s < \beta_i - \alpha_i$,}\\
W^i_s = w(h_i) & \text{for $ s= 0 $ and $ s \geq \beta_i - \alpha_i$}.
\end{cases}
\end{equation}
We have the following Poisson decomposition for the sub-excursions.

\begin{lemma}[Lemma 4.2.4 of \cite{LeGDuqMono}] \label{lemma:onesidePoisson}
Under $\mathbb P^*_{\mu,w}$, the point measure
\[
\sum_{i \in I} \delta_{(h_i,\rho^i,W^i)}
\]
is a Poisson point process with intensity
\[
\mu(dh) \, \mathbb N_{w(h)} \left( d\rho \, dW\right). 
\]
\end{lemma}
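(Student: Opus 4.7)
My plan is to decouple the Poisson structure of the exploration process $\rho$ from the spatial snake $W$, using the product form \eqref{eqn:law of stable snake def}, and handle the two pieces separately.

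First I would prove the tree-valued analogue under $\bPb^*_{\mu}$: that $\sum_{i \in I} \delta_{(h_i, \rho^i)}$ is a Poisson point measure with intensity $\mu(dh)\, N(d\rho)$. The mass process $Y_t = \langle \rho_t, 1 \rangle$ agrees in law with the \Levy process $X - I$ started at $\langle \mu, 1 \rangle$ and killed at its first hit of $0$, and each excursion $(Y_{\alpha_i + s} - K_{\alpha_i})_{0 \leq s \leq \beta_i - \alpha_i}$ canonically encodes the corresponding sub-exploration process $\rho^i$ via \eqref{def:exploration process}. By Itô's excursion formula (as recalled in \Cref{sctn:Levy and Ito}), the measure $\sum_{i} \delta_{(K_{\alpha_i}, \rho^i)}$ is then Poisson with intensity $d\ell \cdot N(d\rho)$ restricted to $\ell \in [0, \langle \mu, 1 \rangle]$. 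It remains to re-parametrise from local time to grafting height: between excursions, $\rho_t$ is a pruned version of $\mu$ obtained by truncating from the top to mass $K_t$, so the quantile identity $K_{\alpha_i} = \mu([0, h_i])$ holds. Pushing forward Lebesgue measure on $[0, \langle \mu, 1 \rangle]$ through $\ell \mapsto h$ converts $d\ell$ into $\mu(dh)$, as required.

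Next I would add the spatial snake. Conditionally on $\rho$, $W$ is sampled as $Q_w^{H(\rho)}$ by \eqref{eqn:law of stable snake def}. The consistency property~\ref{item:snake-consistency} of \Cref{section:stablesnake} forces $W_{\alpha_i + s}(t) = w(t)$ for all $t \leq h_i$ and $s \in [0, \beta_i - \alpha_i]$, so the sub-snake $W^i$ is driven by the height function of $\rho^i$ and starts at the tip value $w(h_i)$. The Brownian increments constituting the $W^i$ at depths exceeding $h_i$ are, conditionally on $\rho$ and on $w$, independent across $i$ by the defining property of $Q$. These two observations identify $(\rho^i, W^i)$ as having conditional law $\mathbb N_{w(h_i)}$ and as being mutually independent across $i$. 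Combined with the Poisson structure of $\{(h_i, \rho^i)\}_{i \in I}$ established in the previous paragraph, this yields the claimed intensity $\mu(dh) \, \mathbb N_{w(h)}(d\rho\, dW)$.

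The main obstacle is the re-parametrisation $\ell \mapsto h$ from local time to height, which relies on the precise way in which $\rho_t$ is obtained from $\mu$ by pruning along the spine --- specifically, that between excursions $\rho_t$ is the restriction of $\mu$ to an initial height interval of total mass exactly $K_t$. This correspondence can be read off from the pruning/concatenation construction of \cite[Section 2.1]{riera2022structure} and benefits from the standing assumption that the feasible initial measures $\mu$ have support of the form $[0,a]$ and are non-atomic, so that the quantile map is a measurable bijection and the pushforward of $d\ell$ is unambiguously $\mu(dh)$.
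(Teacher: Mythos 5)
The paper does not actually prove this lemma: it is imported verbatim as Lemma 4.2.4 of \cite{LeGDuqMono}, so the only thing to compare against is the original argument there. Your reconstruction --- It\^o excursion theory for $Y-K$ giving a Poisson point process in the local-time variable on $[0,\langle\mu,1\rangle]$, the quantile change of variables $\ell \mapsto h$ (valid since $\mu$ is non-atomic with interval support) turning $d\ell$ into $\mu(dh)$, and then marking each excursion with a conditionally independent snake of law $Q^{H(\rho^i)}_{w(h_i)}$ so that $(\rho^i,W^i)\sim\mathbb N_{w(h_i)}$ --- is correct and follows essentially the same route as the proof in \cite{LeGDuqMono}.
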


Note that the above decomposition is one-sided in the sense that it only codes the subtrees on the right hand side of the initial branch. We will also need a two-sided version for the spine to a uniform point. The final formula (\cref{prop:firstmomentrange ds}) combines a few ingredients:
\begin{enumerate}
\item A so-called many-to-one formula which allows us to express integral functions of the snake as the expectation over two subordinators which give the law of the exploration process and its dual projected along a branch to a uniform time point.
\item A Poisson-process decomposition which determines, conditionally on the values of the exploration process and its dual projected along a branch to a uniform time point, the laws of the subtrees (and associated snakes) attached to this branch, and their locations on it.
\item Combining versions of the above two points existing in the literature gives us a special case of the desired formula, when the relevant functionals are in some sense local. We then prove an extension to functionals which can in particular depend on the whole range of the snake.
\end{enumerate}

For the first of these, we let $(U^{(1)}, U^{(2)})$ be a two-dimensional subordinator defined on some probability space $(\Omega_0,\mathcal F_0, P^0)$, started at $(0,0)$ and with Laplace transform (recall that $\psi(\lambda) = \lambda^{\alpha}$)

\begin{align}\label{eqn:U Laplace transform}
E^0[\exp\{-\lambda_1 U^{(1)}_t - \lambda_2 U^{(2)}_t\}] = \begin{cases}
\exp\left\{-\frac{t (\psi(\lambda_1)-\psi(\lambda_2))}{\lambda_1 - \lambda_2}\right\} &\text{ if } \lambda_1 \neq \lambda_2, \\
\exp\{-t \psi'(\lambda_1)\} &\text{ if } \lambda_1 = \lambda_2.
\end{cases}
\end{align}
Note that the marginal of each of $U^{(1)}$ and $U^{(2)}$ is that of a subordinator with Laplace exponent $\tilde{\psi}(\lambda) := \frac{\psi(\lambda)}{\lambda} = \lambda^{\alpha -1}$, and the sum $U^{(1)} + U^{(2)}$ is a subordinator with Laplace exponent $\psi'(\lambda) = \alpha \, \lambda^{\alpha -1}$. Similarly to \eqref{eqn:psi Levy Khinchin}, this entails that
\begin{align}\label{eqn:psi tilde prime Laplace rep}
\begin{split}
\tilde{\psi} ( \lambda) &= \int_0^{\infty} (1-e^{-\lambda r} ) \tilde{\pi} (dr) = \frac{C_{\alpha}\lambda}{2-\alpha} + \int_0^{\infty} (1-e^{-\lambda r} - \lambda r\indic{\{r \leq 1\}}) \tilde{\pi}(dr), \\
\psi' ( \lambda) &= \int_0^{\infty} (1-e^{-\lambda r} ) \pi' (dr) = \frac{\alpha C_{\alpha}\lambda}{2-\alpha} + \int_0^{\infty} (1-e^{-\lambda r}- \lambda r\indic{\{r \leq 1\}}) \pi' (dr).
\end{split}
\end{align}
where $\tilde{\pi} (dr) = C_{\alpha} r^{-\alpha} dr$ and $\pi' (dr) = \alpha C_{\alpha} r^{-\alpha} dr$ (these can be calculated directly from \eqref{eqn:psi Levy Khinchin} and \eqref{eqn:pi and C def} but see \cite[Section III.1]{BertoinLevy} for further background on stable subordinators). We note only that $\tilde{\psi}$ can be easily recovered from $\psi'$ by noting that $U^{(1)}$ is obtained from $U^{(1)} + U^{(2)}$ through the fact that a jump at time $t$ of $U^{(1)}$ corresponds to a uniform portion of the corresponding jump at time $t$ of $U^{(1)} + U^{(2)}$, hence (using Fubini's theorem)
\begin{align}\label{eqn:subordinator uniform split}
\tilde{\psi} ( \lambda) =  \int_0^{\infty} (1-e^{-\lambda \ell} ) \int_0^r \frac{1}{r} d \ell \pi' (dr) = \int_0^{\infty} (1-e^{-\lambda \ell} ) \int_{\ell}^{\infty} \frac{1}{r} \pi' (dr)  d \ell = \int_0^{\infty} (1-e^{-\lambda \ell} ) \tilde{\pi} (d\ell).
\end{align}

We will continue to use the notation $\psi, \psi'$ and $\tilde{\psi}$ throughout the paper to refer to these three functions above.

For each $a>0$, let $(J_a, \J_a)$ denote a pair of random measures given by
\begin{align} \label{eq:defsJ}
(J_a, \J_a) (dt) = (\indic{[0,a]}(t) dU_t^{(1)}, \indic{[0,a]}(t) dU_t^{(2)})
\end{align}

The many-to-one formula is as follows.

\begin{proposition}[Lemma 1 in \cite{riera2022structure}; cf also Lemma 3.4 in \cite{DLG05}] \label{prop:firstmoment ds}
For every $x \in \R$ and every non-negative measurable functional $\phi$ taking values in $M_f(\R_+)^2 \times \W$, we have
\begin{align*}
\N_x \left[ \int_0^{\sigma} \phi (\rho_s, {\hat{\rho}}_s, W_s) ds \right] = \int_0^\infty dh \, E^0 \otimes \Pi_x \left( \phi(J_h, \J_h, (\xi_s)_{0\leq s \leq h}) \right).
\end{align*}
\end{proposition}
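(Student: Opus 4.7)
The strategy is to split the statement into a \emph{tree-side} identity for $(\rho_s, \hat{\rho}_s)$ and a \emph{snake-side} identity for $W_s$ conditional on the tree, then recombine via Fubini. By the factorisation \eqref{eqn:Ito snake def}, the marginal law of $\rho$ under $\mathbb N_x$ (and hence of $\hat{\rho}$ and $H$, which are functionals of $\rho$) is the Itô excursion measure, and conditionally on $\rho$ the snake $W$ has distribution $Q_x^{H(\rho)}$. From the three-point characterisation of $Q_x^h$ recalled in Section \ref{section:stablesnake} (consistency along the spine plus independent Brownian increments beyond), one verifies that for each fixed $s$ the conditional law of $W_s$ given $\rho$ is that of $(\xi_u)_{u \in [0, H_s]}$ under $\Pi_x$. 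Taking expectations and applying Fubini yields
\[
\mathbb N_x\!\left[\int_0^\sigma \phi(\rho_s, \hat{\rho}_s, W_s)\, ds\right] = \mathbb N_x\!\left[\int_0^\sigma \Pi_x\bigl[\phi\bigl(\rho_s, \hat{\rho}_s, (\xi_u)_{u \in [0, H_s]}\bigr)\bigr]\, ds\right].
\]

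It now suffices to prove the pure tree-side ``many-to-one'' identity
\[
\mathbb N_x\!\left[\int_0^\sigma g(\rho_s, \hat{\rho}_s, H_s)\, ds\right] = \int_0^\infty dh\, E^0\bigl[g(J_h, \J_h, h)\bigr]
\]
for every non-negative measurable $g$ on $M_f(\R_+)^2 \times \R_+$: applying this to $g(m_1, m_2, h) := \Pi_x[\phi(m_1, m_2, (\xi_u)_{u \in [0, h]})]$ and using independence of $\xi$ from $(U^{(1)}, U^{(2)})$ — so that $E^0 \otimes \Pi_x$ is their joint law — gives exactly the proposition. This tree-side identity is the Bismut-type decomposition of a Lévy tree at a uniform leaf, proved in full generality as Lemma~3.4 of \cite{DLG05}; the precise form of the Laplace transform \eqref{eqn:U Laplace transform} of $(U^{(1)}, U^{(2)})$ is no accident, since $\frac{\psi(\lambda_1) - \psi(\lambda_2)}{\lambda_1-\lambda_2}$ is exactly the cumulant exponent that arises when one decomposes the jumps of $X$ falling on either side of the running infimum along a spine into the left and right subtree masses attached to it.

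The main technical obstacle is thus the tree-side identity itself. The cleanest route (as in \cite{DLG05}) is to compute both sides as Laplace functionals in dual parameters $(\lambda_1, \lambda_2)$ paired against $(\rho_s, \hat{\rho}_s)$: on the left one combines the strong Markov property of the exploration process with the time-change relating $\rho$ to the reflected Lévy process $X - I$ and the Lévy--Khinchin representation \eqref{eqn:psi Levy Khinchin}, while on the right one reads off the answer directly from \eqref{eqn:U Laplace transform}. The two Laplace functionals match as explicit functions of $(\lambda_1, \lambda_2, h)$, after which a monotone-class argument upgrades the identity from exponential test functions to arbitrary non-negative measurable $g$. Since this computation is carried out for a general branching mechanism $\psi$ in \cite{DLG05}, the stable case requires no new input beyond the specialisation $\psi(\lambda) = \lambda^\alpha$; we therefore simply invoke the cited result.
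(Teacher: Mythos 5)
Your argument is correct: the reduction via the one-dimensional marginal of the snake (conditionally on the driving excursion, $W_s$ is a Brownian path of duration $H_s$ started at $x$) combined with the Bismut-type many-to-one identity for $(\rho_s,\hat{\rho}_s)$ is exactly how this formula is established in the cited references. The paper itself offers no proof — it states the proposition as Lemma 1 of \cite{riera2022structure}, cf.\ Lemma 3.4 of \cite{DLG05} — so your outline is consistent with, and slightly more explicit than, the paper's treatment, which likewise ultimately rests on those citations.
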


We will need an extension of this result when the functional $\phi$ also depends on the range and lifetime of the snake (see Theorem 2.2 of~\cite{legall-weill}), and moreover where we also consider the laws of all the subtrees grafted to the spine from the root to $p(s)$. For $h > 0$, under $E^0 \otimes \Pi_x$, conditionally on the measure $(J_h + \hat J_h)$ and the Brownian trajectory $(\xi_s)_{0\leq s \leq h}$, we consider an independent Poisson point process on $[0,h] \times \mathcal W$, denoted
\[
\mathcal P := \sum_{i\in I} \delta_{a_i,w_i},
\]
with intensity
\begin{equation}
(J_h + \hat J_h)(da) \otimes \mathbb N_{\xi_a} (dw).
\end{equation}
The extension we need is as follows. In the proposition we let $\mathcal{R}$ denote the \textit{range} of the snake $W$, and we let $\mathbb{E}_{\Pcal}$ denote expectation with respect to the point process $\Pcal$ defined above.

\begin{proposition}[cf Theorem 2.2 in \cite{legall-weill}] \label{prop:firstmomentrange ds}
For every $x \in \R$ and every non-negative measurable functional $\phi$ taking values in $M_f(\R_+)^2 \times \W$, we have
\begin{align*}
&\N_x \left[ \int_0^{\sigma} \phi (\rho_s, {\hat{\rho}}_s, W_s,\mathcal R, \sigma) ds \right] \\
&\qquad = \int_0^\infty dh \, E^0 \otimes \Pi_x \left [
\mathbb E_{\mathcal{P}} \left[
\phi \left( J_h, \J_h, (\xi_s)_{0\leq s \leq h}, \overline{\bigcup_{i \in I} \left(\zeta_{a_i} + \mathcal R(w_i) \right)} , \sum_{i \in I} \sigma (w_i)\right)
\middle| (J, \J, \xi)
\right] \right].
\end{align*}
\end{proposition}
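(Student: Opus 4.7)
The plan is to mirror the proof strategy of \cite[Theorem 2.2]{legall-weill}, replacing the Brownian ingredients by their stable analogues from the present paper. The core idea is that for a fixed time $s \in [0,\sigma]$, the entire snake is reconstructible from the spine data $(\rho_s, \hat\rho_s, W_s)$ coding the branch from the root to $p(s)$, together with a Poissonian collection of subtrees and sub-snakes grafted onto this spine on both sides. The range $\mathcal R$ and the lifetime $\sigma$ are global functionals that can be read off from this decomposition, and the joint distribution of the spine data under $ds$-integration is already described by \cref{prop:firstmoment ds}; combining these two ingredients will yield the claim.

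More concretely, I would proceed in three steps. First, I would establish a two-sided Poisson decomposition of the snake relative to the spine at time $s$: conditionally on $(\rho_s, \hat\rho_s, W_s)$, the subtrees/sub-snakes explored during $[s,\sigma]$ (to the right of the spine) form a Poisson point process with intensity $\rho_s(da)\otimes \N_{W_s(a)}(dw)$, by the strong Markov property at time $s$ together with \cref{lemma:onesidePoisson}. The subtrees/sub-snakes explored during $[0,s]$ (to the left of the spine) form an independent Poisson point process with intensity $\hat\rho_s(da)\otimes \N_{W_s(a)}(dw)$, obtained by applying the same argument to the time-reversed snake via \eqref{eq:reverserho}. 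Superposing the two collections yields a single Poisson point process $\Pcal^{(s)}$ on $[0,\zeta_s]\times\W$ with intensity $(\rho_s+\hat\rho_s)(da)\otimes \N_{W_s(a)}(dw)$. Second, I would express the global functionals through $\Pcal^{(s)}$: since the spine corresponds to a Lebesgue-null subset of $[0,\sigma]$, one has $\sigma = \sum_i \sigma(w_i)$, and since each sub-snake $w_i$ is started from the spine value $W_s(a_i)$, one has $\mathcal R = \overline{\bigcup_i (W_s(a_i) + \mathcal R(w_i))}$. Third, I would introduce the auxiliary functional
\[
\widetilde\phi(\mu,\nu,w) \,=\, \mathbb{E}_{\Pcal}\!\left[\phi\!\left(\mu,\nu,w,\,\overline{\bigcup_i (w(a_i)+\mathcal R(w_i))},\,\sum_i \sigma(w_i)\right)\,\middle|\,\mu,\nu,w\right],
\]
where $\Pcal = \sum_i \delta_{(a_i,w_i)}$ is an auxiliary Poisson point process on $[0,\zeta_w]\times\W$ with intensity $(\mu+\nu)(da)\otimes \N_{w(a)}(dw)$. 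By Steps 1 and 2,
\[
\N_x\!\left[\int_0^\sigma \phi(\rho_s,\hat\rho_s,W_s,\mathcal R,\sigma)\,ds\right]\,=\,\N_x\!\left[\int_0^\sigma \widetilde\phi(\rho_s,\hat\rho_s,W_s)\,ds\right],
\]
and applying \cref{prop:firstmoment ds} to $\widetilde\phi$ yields the stated identity, since under $E^0\otimes\Pi_x$ the intensity driving the auxiliary $\Pcal$ becomes precisely $(J_h+\J_h)(da)\otimes \N_{\xi_a}(dw)$.

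The main obstacle is the rigorous justification of the two-sided Poisson decomposition in Step 1: \cref{lemma:onesidePoisson} only provides the one-sided (right) decomposition, and to obtain the left-hand subtrees one has to apply the time-reversal duality \eqref{eq:reverserho} and then verify the conditional independence of the two collections given the spine using the strong Markov property at $s$. A secondary but more routine issue is the measurability of $(\mu,\nu,w)\mapsto\widetilde\phi(\mu,\nu,w)$, which can be handled by a monotone class argument.
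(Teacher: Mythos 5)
Your proposal is correct and follows essentially the same route as the paper: a two-sided Poissonian spinal decomposition, expressing $\mathcal R$ and $\sigma$ as functionals of the grafted sub-snakes, and then an application of the many-to-one formula of \cref{prop:firstmoment ds} to the resulting conditional expectation, exactly as in the deduction of \cite[Theorem 2.2]{legall-weill} from its range-free analogue. The only difference is presentational: the paper outsources your Step 1 (the two-sided decomposition, including the conditional independence of the left and right subtrees given the spine) to \cite[Theorem 4.5]{DLG05}, whose proof is precisely the one-sided-lemma-plus-time-reversal argument you sketch.
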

\begin{proof}
The statement without the range follows from combining \cref{prop:firstmoment ds} with \cite[Theorem 4.5]{DLG05} (additionally integrating over $a$ in the statement of \cite[Theorem 4.5]{DLG05}). In particular, repeat the proof of \cite[Theorem 4.5]{DLG05}, and use \cref{prop:firstmoment ds} as an input in place of \cite[Lemma 3.4]{DLG05}, along with \cref{lemma:onesidePoisson}. Then integrate over $a$ in the final statement. The stated result then follows from \cref{prop:firstmoment ds} using the same proof as Theorem 2.2 in \cite{legall-weill}, which deduces the Brownian version of this proposition from the Brownian analogue without the range.
\end{proof}

In order to carry out calculations for Poisson processes such as those appearing in \cref{lemma:onesidePoisson} and \cref{prop:firstmomentrange ds}, and with respect to the measures $J$ and $\hat{J}$, we remind the reader of two standard formulas.

Firstly note that, using the Markov property and stationarity of increments, the formula \eqref{eqn:U Laplace transform} easily generalises to the following for continuous nonnegative functions $f$.
\begin{align}\label{eqn:U Laplace transform cts}
\begin{split}
E^0\left[\exp\left\{-\int_0^t f(s) d(U^{(1)} + U^{(2)})_s \right\}\right] &= 
\exp\left\{-\int_0^t \psi'(f(s)) ds \right\} \\
E^0\left[\exp\left\{-\int_0^t f(s) dU^{(1)}_s \right\}\right] &= 
\exp\left\{-\int_0^t \tilde{\psi}(f(s)) ds \right\}.
\end{split}
\end{align}

Secondly, we will use the following formula for Poisson point processes several times.

\begin{proposition}\label{prop:Poisson master formula}
Let $P = \sum_{i \in \mathcal{I}} \delta (\omega_i)$ denote a Poisson point process on a space $\Omega$ with intensity measure $\hat{\mu}$ and let $f$ be a function $\Omega \to \R$. Then
\[
\E{\prod_{i \in \mathcal{I}} f (\omega_i)} = \exp \left\{\int_{\Omega} (f (\omega)-1) \hat{\mu}(d \omega)\right\}.
\]
\end{proposition}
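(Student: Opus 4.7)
The plan is to reduce to the case of a finite intensity measure, where the Poisson point process admits an explicit description, and then to use independence on disjoint pieces to extend to the $\sigma$-finite case.

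First suppose $\hat{\mu}(\Omega) < \infty$. Then $N := \# \mathcal{I}$ is Poisson distributed with parameter $\hat{\mu}(\Omega)$, and conditionally on $\{N = n\}$ the atoms $(\omega_i)_{1 \leq i \leq n}$ are i.i.d.\ with common law $\hat{\mu}/\hat{\mu}(\Omega)$. Conditioning on $N$ and summing the resulting exponential series yields
\[
\E{\prod_{i \in \mathcal{I}} f(\omega_i)} = \sum_{n=0}^{\infty} e^{-\hat{\mu}(\Omega)} \frac{\hat{\mu}(\Omega)^n}{n!} \left( \frac{1}{\hat{\mu}(\Omega)} \int_{\Omega} f\, d\hat{\mu} \right)^{\!n} = \exp\left( \int_{\Omega} (f-1)\, d\hat{\mu} \right),
\]
which is the desired identity.

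In the general $\sigma$-finite case, decompose $\Omega = \bigsqcup_{k \geq 1} \Omega_k$ into pieces of finite intensity. The restrictions $P|_{\Omega_k}$ are then independent Poisson point processes on $\Omega_k$ with intensities $\hat{\mu}|_{\Omega_k}$, so the product $\prod_{i \in \mathcal{I}} f(\omega_i)$ factors as $\prod_k \prod_{i:\,\omega_i \in \Omega_k} f(\omega_i)$ and its expectation equals the product of the corresponding expectations. Applying the first step on each $\Omega_k$ and summing the exponents yields the claim.

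The main obstacle, if one wishes to treat $f$ in full generality, is justifying the passage to the limit in the $\sigma$-finite step when $\hat{\mu}(\Omega) = \infty$. This is automatic under either of the natural hypotheses $0 \leq f \leq 1$ (both sides then lie in $[0,1]$ by monotone convergence as $\bigsqcup_{k \leq K} \Omega_k \uparrow \Omega$) or $f > 0$ with $(1-f) \in L^1(\hat{\mu})$ (dominated convergence). Every instance of the formula needed later in the paper falls into one of these regimes, and the identity is in any case a standard fact of Poisson point process theory (see, e.g., Kingman's monograph \emph{Poisson Processes}); we have included the short derivation only for completeness.
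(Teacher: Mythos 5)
Your proof is correct, but note that the paper itself offers no proof of this proposition at all: it is stated as a standard fact of Poisson point process theory (exactly the ``exponential formula'' in Kingman's book, or Theorem 3.9 of Last--Penrose), and the paper implicitly relies on that citation. Your two-step route --- the mixed-binomial representation (Poisson number of i.i.d.\ atoms) for finite intensity, followed by the restriction/independence theorem and a limiting argument for the $\sigma$-finite case --- is precisely the textbook argument, so there is no conflict, only added self-containedness. Two remarks. First, your observation that the statement as written (arbitrary $f:\Omega\to\R$) needs qualification is a genuine improvement: with infinitely many atoms the product need not even be well defined for sign-changing $f$, and all of the paper's applications (in Lemma 3.1, Proposition 5.2 and Section 6) indeed use $f$ with values in $[0,1]$, where both sides make sense with the convention $e^{-\infty}=0$. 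Second, your appeal to ``dominated convergence'' in the regime $f>0$, $(1-f)\in L^1(\hat\mu)$ is the one place that deserves a sentence more: the partial products are not monotone there, and the natural dominating variable is $\prod_{i:\,f(\omega_i)>1} f(\omega_i)$, whose expectation is finite by first applying the monotone (increasing) case on the set $\{f>1\}$, where $\int_{\{f>1\}}(f-1)\,d\hat\mu<\infty$; combining the two independent restrictions to $\{f\le 1\}$ and $\{f>1\}$ then closes the argument. Since you correctly flag that this regime is never needed for the paper, this is a cosmetic rather than substantive gap.
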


\subsubsection{Exit measures and the special Markov property}\label{sctn:exit measures and SMP}

In this section we consider the laws of subtrees and associated snakes that branch out from certain level sets of the snake (specifically the times when the snake first exits some domain $D$). In order to formalise this, we will need some notation, following the presentation of \cite[Section 2.4]{legall-weill} and \cite[Section 3]{riera2022structure} for the stable case. We also refer to \cite[Section 4.3]{LeGDuqMono} for some further background on exit measures.

We fix some $x \in \R$ and let $D$ denote a domain in $\R$ such that $x \in D$. For $w \in \W$ we define 
\[
\tau^D(w) = \inf \{t \geq 0: w(t) \notin D\}
\]
(where we interpret the infimum of the empty set as infinity). We now consider the pair $(\rho, W)$ sampled under $\N_x$. Note that the random set $\{s \geq 0 : \zeta_s > \tau^D(W_s)\}$ is open $\N_x$-almost everywhere and can therefore ($\N_x$-almost everywhere) be written as a disjoint union of open intervals of the form $(a_i, b_i)_{i \in I}$ satisfying, for all $s \in (a_i, b_i)$,
\[
\tau^D(W_{s}) = \tau^D(W_{a_i}) = \tau^D(W_{b_i}) = \zeta_{a_i} = \zeta_{b_i}.
\]
Moreover for all $s \in [a_i, b_i]$, the paths $W_s$ are identical up until their first exit time from $D$.

We first define a process that keeps track of the evolution of snake trajectories before their exit times from $D$; this is defined as a time-change of the original snake by setting 
\begin{equation} \label{eq:exit-theta}
\theta^D_s = \inf \left\{ r \geq 0: \int_0^r \indic{\tau^D(W_u) > \zeta_u} du > s \right\} \qquad \text{ and } \qquad \widetilde{W}^D_s = W_{\theta^D_s}.
\end{equation}
We then let $\mathcal{E}^D$ denote the $\sigma$-algebra generated by $\widetilde{W}^D$, completed with the set of $\N_x$-negligible sets.

Our aim is to consider the laws of the ``excursions" of the snake during the time intervals $(a_i, b_i)_{i \in I}$. We define the collection of excursions outside of $D$ as $(\rho^{(i)},W^{(i)})_{i \in I}$ taking values in $M_f(\R_+) \times \W$ such that for all $s \in [a_i, b_i]$ (for all bounded measurable functions $f:\R_+ \to \R$),
\begin{align*}
\langle \rho^{(i)}_s, f \rangle &= \int f(h-\tau^D(W_{a_i})) \indic{h>\tau^D(W_{a_i})}  \rho_{a_i+s}(dh) \\
W^{(i)}_s(t) &= W_{(a_i+s) \wedge b}(t+\zeta_{a_i}), \qquad 0 \leq t \leq \zeta_{(a_i+s) \wedge b} - \zeta_{a_i}.
\end{align*}

In order to properly index these excursions, we also need the notion of \textbf{exit local time}, defined for each $s \geq 0$ via the approximation
\begin{equation}\label{eqn:exit measure local time def}
L_s^D = \lim_{\epsilon \downarrow 0} \frac{1}{\epsilon} \int_0^s \indic{ \tau^D(W_r) < H_r < \tau^D(W_r) + \epsilon }dr
\end{equation}
(indeed it follows from \cite[Propositions 4.3.1 and 4.3.2]{LeGDuqMono} that the limit exists for all $s \geq 0$). By \cite[Proposition 3]{riera2022structure}, the time-changed process $(L_{\theta_s^D})_{s \geq 0}$ is measurable with respect to $\mathcal{E}^D$, and the measure $d L^D_s$ is supported on the set $\{s\geq 0: \Wts \in \partial D\}$. The \textbf{exit measure from $D$} is the measure $\Z^D$ satisfying, for all bounded measurable $f: \partial D \to \R_+$,
\[
\langle \Z^D, f \rangle = \int_0^{\sigma} f(\Wt_s) dL_s^D. 
\]

Note that the total mass of $\Z^D$ is $L^D_{\sigma}$. The main proposition is as follows.

\begin{proposition}[Corollary 1 in~\cite{riera2022structure}]\label{prop:special MP}
Set $T_D = \inf \{ t \geq 0: \tau^D(W_t) < \infty\}$ (again interpreting the infimum of the empty set as infinity). Then, under $\N_x(\cdot | T_D < \infty)$ and conditionally on $\mathcal{E}^D$, the point measure
\[
\sum_{i \in I} \delta(\rho^{(i)}, W^{(i)}) (d \rho, d \omega)
\]
is a Poisson random measure with intensity $\int \N_y(d\rho, d\omega) \Z^D(dy)$.
\end{proposition}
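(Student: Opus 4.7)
The plan is to adapt the classical proof of the special Markov property for Brownian snakes (Le Gall) to the stable setting, using the excursion-theoretic apparatus developed in \cite{riera2022structure} and \cite{LeGDuqMono}. Since the existence of the exit local time $L^D$ via the approximation \eqref{eqn:exit measure local time def} and the measurability of $\Z^D$ with respect to $\mathcal{E}^D$ have been established in \cite[Propositions 4.3.1 and 4.3.2]{LeGDuqMono} and \cite[Proposition 3]{riera2022structure}, I would take these as inputs and focus on identifying the conditional law.

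First I would verify that $T_D$ is a stopping time for the canonical filtration $(\F_{s+})_{s\geq 0}$ of the snake, and that on $\{T_D < \infty\}$ the random variable $W_{T_D}$ has lifetime $\tau^D(W_{T_D}) = \zeta_{T_D}$ with $\widehat{W}_{T_D} \in \partial D$. Applying the strong Markov property of $(\rho, W)$ at $T_D$ (Theorem 4.1.2 of \cite{LeGDuqMono}), the post-$T_D$ process has law $\mathbb{P}_{\rho_{T_D}, W_{T_D}}$, so the subsequent subtree excursions above level $\zeta_{T_D}$ are governed by the one-sided Poisson decomposition of Lemma \ref{lemma:onesidePoisson} applied with $\mu = \rho_{T_D}$ and $w = W_{T_D}$. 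This gives the first excursion outside $D$ and the remaining snake trajectory rooted on the old spine.

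Next I would iterate. The key observation is that the truncated process $\widetilde{W}^D$ is itself Markovian (this can be extracted by combining the strong Markov property of $(\rho,W)$ with the time change \eqref{eq:exit-theta}), and that each time $\widetilde{W}^D$ visits $\partial D$ initiates a fresh excursion outside $D$ which, by the strong Markov property applied at $a_i$ and Lemma \ref{lemma:onesidePoisson}, is distributed as $\N_{\widehat{W}_{a_i}}$ and is conditionally independent of $\mathcal{E}^D$ and of the other excursions. The collection $(\rho^{(i)}, W^{(i)})_{i\in I}$ thus forms, conditionally on $\mathcal{E}^D$, a Poisson point measure whose intensity is concentrated on trajectories starting from $\partial D$ and whose ``clock'' is the exit local time $L^D$; identifying this clock uses the definition of $L^D$ as a local time at the boundary, together with the fact that $dL^D_s$ charges precisely the instants $a_i$ where new excursions begin. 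This yields intensity $\int \N_y(d\rho, d\omega)\, \Z^D(dy)$.

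The main obstacle is making the identification of the Poisson intensity rigorous: one must justify that the normalising constant in the local-time approximation \eqref{eqn:exit measure local time def} is the correct one so that the measure $\Z^D$ counts excursions with the right weight, and that conditionally on $\mathcal{E}^D$ the excursions are genuinely independent rather than merely exchangeable. The cleanest way I see to handle this is to first prove the statement for a Laplace functional $\exp\bigl(-\sum_i F(\rho^{(i)}, W^{(i)})\bigr)$ by an optional stopping argument using the martingales associated to the snake (as in \cite[Section 4.3]{LeGDuqMono}), and then identify both sides via \cref{prop:Poisson master formula}. The cited Corollary 1 of \cite{riera2022structure} carries out exactly this program in the stable setting.
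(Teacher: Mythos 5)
The paper does not actually prove \cref{prop:special MP}: it is imported verbatim as Corollary 1 of \cite{riera2022structure}, so there is no in-paper argument to compare yours against. Your proposal is a roadmap of the known proof strategy rather than a proof, and you say as much at the end; but two of its steps are genuine gaps rather than routine details. First, applying \cref{lemma:onesidePoisson} at the stopping time $T_D$ does not ``give the first excursion outside $D$'': that lemma decomposes the subtrees grafted to the right of the single spine $[0,\zeta_{T_D}]$ carried by $\rho_{T_D}$, indexed by height along that spine, whereas the excursions $(\rho^{(i)},W^{(i)})$ are indexed by the connected components of the open set $\{s:\zeta_s>\tau^D(W_s)\}$ and are scattered throughout $[0,\sigma]$, with debuts that may accumulate; they do not arise one at a time along a single branch, so there is no ``first'' one to isolate and no clean iteration of the strong Markov property over successive excursion starts. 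Second, the assertion that the truncated process $\widetilde W^D$ is Markovian and that this ``can be extracted by combining the strong Markov property of $(\rho,W)$ with the time change \eqref{eq:exit-theta}'' badly understates the difficulty in the stable setting: because the height process is not Markovian, one must construct a truncated exploration process and show that the time-changed pair is again a L\'evy snake driven by a modified L\'evy process, and this is one of the main technical contributions of \cite{riera2022structure} (building on \cite[Section 4.3]{LeGDuqMono}), not a corollary of the time change.

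Beyond these two points, the step you yourself identify as the main obstacle --- showing that, conditionally on $\mathcal E^D$, the excursions are genuinely independent and that the intensity is exactly $\int \N_y(\cdot)\,\Z^D(dy)$ with the normalisation of \eqref{eqn:exit measure local time def} --- is left unresolved; the proposed route via Laplace functionals, optional stopping and \cref{prop:Poisson master formula} is the correct one, but as written it is a pointer to the literature rather than an argument, and the concluding appeal to ``the cited Corollary 1'' is circular for the purposes of proving that corollary. In short: the outline is directionally sound and names the right ingredients (exit local time, truncated snake, Laplace functionals), but it does not constitute a proof, and the two specific claims flagged above would need to be repaired, not merely fleshed out.
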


We will also use the following first moment formula (Proposition 4.3.2 of \cite{LeGDuqMono} or Equation (3.4) of \cite{riera2022structure}), which slightly generalises \cref{prop:firstmoment ds}.

\begin{proposition} \label{prop:firstmomentdLs}
For every domain $D$, every $x\in D$, and every non-negative measurable functional $\phi$ taking values in $M_f(\R_+)^2 \times \W$, we have
\begin{align*}
\N_x \left[ \int_0^{\sigma} d L_s^D \, \phi (\rho_s, {\hat{\rho}}_s, W_s) \right] =  E^0 \otimes \Pi_x \left( \indic{\tau^D<\infty}  \phi(J_{\tau_D}, \J_{\tau_D}, (\xi_s)_{0\leq s \leq \tau_D}) \right).
\end{align*}
\end{proposition}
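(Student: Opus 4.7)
The statement is essentially Proposition 4.3.2 of \cite{LeGDuqMono} (equivalently, Equation~(3.4) in \cite{riera2022structure}) translated into the notation of \cref{prop:firstmoment ds}, so the plan is to give a self-contained derivation by combining that many-to-one formula with the defining approximation of the exit local time
\[
L_s^D = \lim_{\epsilon \downarrow 0} \frac{1}{\epsilon} \int_0^s \indic{\tau^D(W_r) < H_r < \tau^D(W_r) + \epsilon}\, dr.
\]

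First I would apply \cref{prop:firstmoment ds} to the functional
\[
\tilde{\phi}_\epsilon(\rho, \hat\rho, w) := \frac{1}{\epsilon}\indic{\tau^D(w) < \zeta_w < \tau^D(w) + \epsilon}\, \phi(\rho, \hat\rho, w),
\]
which yields
\[
\N_x\!\left[\int_0^\sigma \tilde\phi_\epsilon(\rho_s, \hat\rho_s, W_s)\, ds\right] = E^0 \otimes \Pi_x\!\left[\indic{\tau^D(\xi) < \infty}\cdot \frac{1}{\epsilon} \int_{\tau^D(\xi)}^{\tau^D(\xi)+\epsilon} \phi\bigl(J_h, \J_h, (\xi_u)_{0 \leq u \leq h}\bigr)\, dh\right],
\]
where $\tau^D(\xi) := \inf\{t \geq 0 : \xi_t \notin D\}$, and where I have used that for $h > \tau^D(\xi)$ the first exit time of the restricted path $(\xi_u)_{0\leq u \leq h}$ coincides with $\tau^D(\xi)$. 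As $\epsilon \downarrow 0$ the inner $dh$-average on the right converges pointwise to $\phi(J_{\tau^D(\xi)}, \J_{\tau^D(\xi)}, (\xi_u)_{0 \leq u \leq \tau^D(\xi)})$, since the map $h \mapsto (\xi_u)_{0\leq u \leq h}$ is right-continuous in the topology on $\W$, and since $(U^{(1)}, U^{(2)})$ is independent of $\xi$ so that almost surely $\tau^D(\xi)$ is not a jump time of either coordinate, making $h \mapsto (J_h, \J_h)$ continuous at $h = \tau^D(\xi)$ in the weak topology on $M_f(\R_+)$.

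To exchange limit and expectation on the right-hand side I would first restrict to bounded continuous $\phi$, where dominated convergence applies because the $dh$-averages are bounded uniformly in $\epsilon$ by $\|\phi\|_\infty\,\indic{\tau^D(\xi)<\infty}$; the extension to general non-negative measurable $\phi$ is then a routine monotone class argument. On the left-hand side, the corresponding convergence
\[
\N_x\!\left[\int_0^\sigma \tilde\phi_\epsilon(\rho_s,\hat\rho_s,W_s)\, ds\right] \longrightarrow \N_x\!\left[\int_0^\sigma \phi(\rho_s, \hat\rho_s, W_s)\, dL_s^D\right]
\]
is precisely the exit local time approximation underlying \cite[Propositions 4.3.1--4.3.2]{LeGDuqMono}. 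The main obstacle is the joint compatibility of these two limit passages: one cannot take $\epsilon \downarrow 0$ inside \cref{prop:firstmoment ds} naïvely because the $\epsilon^{-1}$ factor makes $\tilde\phi_\epsilon$ unbounded, so care is needed to run the bounded-continuous argument first and then identify both sides with the claimed formula by monotone extension, exactly as in the argument of \cite[Proposition 4.3.2]{LeGDuqMono}.
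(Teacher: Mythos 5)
The paper does not actually prove this proposition: it is imported with the citation ``Proposition 4.3.2 of \cite{LeGDuqMono} or Equation (3.4) of \cite{riera2022structure}'' standing in place of a proof. So there is no internal argument to match, and the relevant comparison is with the proof in those sources; your sketch is indeed the standard derivation used there. Your first two steps are sound: applying \cref{prop:firstmoment ds} to $\tilde\phi_\epsilon$ and identifying the indicator correctly (for $h>\tau^D(\xi)$ the restricted path's exit time equals $\tau^D(\xi)$), then passing to the limit on the subordinator--Brownian side by dominated convergence. (In fact your jump-time argument is unnecessary: right-continuity of $h\mapsto (J_h,\J_h)$ at $\tau^D$ is automatic, since the total mass of $J_h-J_{\tau^D}$ is $U^{(1)}_h-U^{(1)}_{\tau^D}\to 0$ by right-continuity of the subordinator.) The monotone class extension at the end is also routine, as you say.

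The genuine gap is the left-hand limit, and your handling of it is circular as written. You justify
\[
\N_x\!\left[\int_0^\sigma \tilde\phi_\epsilon(\rho_s,\hat\rho_s,W_s)\,ds\right] \longrightarrow \N_x\!\left[\int_0^\sigma \phi(\rho_s,\hat\rho_s,W_s)\,dL_s^D\right]
\]
by invoking ``Propositions 4.3.1--4.3.2 of \cite{LeGDuqMono}'', but Proposition 4.3.2 there \emph{is} the statement being proved; once it is allowed, the whole derivation is superfluous. What you may legitimately use is only the convergence of the \emph{unweighted} approximation to $L^D$ (Proposition 4.3.1 of \cite{LeGDuqMono}, or the paper's defining limit \eqref{eqn:exit measure local time def}), and neither gives the displayed convergence for free: the pointwise limit alone yields at best a Fatou-type inequality, and even uniform $L^1$ convergence of the approximating processes must be combined with an extra argument to insert the weight $\phi(\rho_s,\hat\rho_s,W_s)$, which is only c\`adl\`ag in $s$ --- for instance, approximate it uniformly by step functions in $s$ (whose weighted integrals converge by the uniform $L^1$ control and telescoping over finitely many endpoints), use that the approximating occupation measures have uniformly bounded total mass in $\N_x$-mean, and localize on events of finite $\N_x$-measure since $\N_x$ is $\sigma$-finite. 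This exchange of limits is exactly the technical content that separates the proposition from \cref{prop:firstmoment ds}, so you should either carry it out explicitly or do what the paper does and simply quote the result.
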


\subsubsection{Re-rooting at the minimum}

Finally, we note the following important consequence of the special Markov property.

\begin{proposition}\label{prop:minimum unique}
$\N_0$-almost everywhere, the time $s^* \in [0,\sigma]$ at which $\Wt$ attains its minimum is unique.
\end{proposition}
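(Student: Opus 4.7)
The aim is to show that $M := \{s \in [0, \sigma] : \Wt_s = m\}$, where $m := \inf_{s \in [0, \sigma]} \Wt_s$, is $\N_0$-almost everywhere a singleton. Under $\N_0$ the snake path rooted at time $0$ is a (nontrivial) Brownian motion started at $0$, so $\Wt$ takes strictly negative values in every right neighbourhood of $0$; hence $m < 0$ $\N_0$-a.e.\ and $M \subset (0, \sigma)$. The strategy is to exhibit, for a countable sequence of levels, a nested family of intervals containing $M$ whose lengths shrink to $0$.

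For each rational $q < 0$ apply the special Markov property (\cref{prop:special MP}) to $D_q = (q, +\infty)$. Snake paths are continuous, so the exit measure $\Z^{D_q}$ is concentrated on $\{q\}$, and conditionally on $\mathcal{E}^{D_q}$ the excursions of the snake below $q$ form a Poisson point process of trajectories each distributed as $\N_q$; on $\{m < q\}$, the global minimum is attained inside one of these excursions. A direct application of the scaling relation \eqref{eqn:Ito scaling} yields
\[
\N_0(\min \Wt \leq -y) = C \, y^{-2/(\alpha - 1)} \qquad \text{for all } y > 0
\]
and some constant $C > 0$, so by translation the law of $\min \Wt$ under $\N_q$ is atomless. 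Consequently, $\N_0$-a.e.\ on $\{m < q\}$ the minima of the Poisson excursions below $q$ are pairwise distinct, and $M$ lies inside a single such excursion, whose time interval we denote $(a_q, b_q)$.

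Now fix a decreasing sequence of rationals $q_n \downarrow m$. By maximality of excursions below the level, the intervals $(a_{q_n}, b_{q_n})$ are nested in $n$ and each contains $M$. It remains to show $b_{q_n} - a_{q_n} \to 0$. Jointly scaling $(\sigma, \min \Wt)$ via \eqref{eqn:Ito scaling} and disintegrating $\N_{q_n}$ over $\min \Wt$, the regular conditional law of $\sigma$ under $\N_{q_n}$ given $\min \Wt = m$ is that of $(q_n - m)^{2\alpha/(\alpha-1)}$ times a positive random variable whose distribution is independent of $n$; by the independence of marks in the Poisson decomposition, this is precisely the conditional law of the lifetime $b_{q_n} - a_{q_n}$ of the excursion carrying the argmin given that its minimum equals $m$. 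Therefore $b_{q_n} - a_{q_n} \to 0$ in probability as $n \to \infty$, and by monotonicity in $n$ also $\N_0$-a.e., so $\diam(M) \leq b_{q_n} - a_{q_n} \to 0$ and $M$ is a singleton. The most delicate step is this disintegration: one must verify carefully that within the Poisson collection the distinguished excursion realising the global minimum genuinely inherits the scaled $\N_q$-conditional law $(\sigma \mid \min \Wt = m)$, which relies on the independence of the marks of a Poisson point process together with an atomlessness argument to select this excursion unambiguously.
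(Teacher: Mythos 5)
Your proof is correct in its essentials, but it follows a genuinely different route from the paper, which simply transplants the argument of \cite[Proposition 2.5]{legall-weill}. That argument runs by contradiction: assuming the minimum is attained at two times $s_1<s_2$, the same Poisson-plus-atomlessness step forces $s_1$ and $s_2$ into a single excursion below every rational level $q>m$, whence $p(s_1)=p(s_2)$ is a non-leaf point of the tree, and one then separately proves that the minimum is a.e.\ attained only at leaves (using that the skeleton is a countable union of segments between rational-indexed points). Your proof shares the first half --- the special Markov property for $D_q=(q,\infty)$ (\cref{prop:special MP}) combined with the atomless law of the minimum, which you correctly extract from the scaling relation \eqref{eqn:Ito scaling} --- but replaces the leaf dichotomy with a quantitative shrinking argument: the argmin set is trapped in nested excursion intervals whose conditional lifetime given $m$ scales like $(q_n-m)^{2\alpha/(\alpha-1)}$. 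This buys a self-contained proof avoiding the ``minimum is attained at a leaf'' lemma, at the price of the disintegration step you rightly flag; that step is sound, since the argmin atom of a Poisson process with $\sigma$-finite, atomless mark intensity has, conditionally on the value of the minimum, the conditioned intensity as its law (a consequence of Mecke's formula), and the scaling covariance of $\N_0$ then yields the claimed conditional law of $\sigma$. Two details to tighten in a final write-up: (i) you need $\N_0\bigl(\min\Wt\le -y\bigr)<\infty$ for the scaling identity to be meaningful and for the Poisson computation (this is true, and is consistent with \cref{thm:range snake intro}, whose proof does not rely on the present proposition, so there is no circularity); and (ii) ``convergence in probability'' under the $\sigma$-finite measure $\N_0$ should be carried out on finite-measure events such as $\{-K<m<-1/K\}$, where dominated convergence --- using only that your variable $\Sigma$ is a.e.\ finite, with no tail estimate required --- gives $\N_0\bigl(\lim_n(b_{q_n}-a_{q_n})>\delta,\,-K<m<-1/K\bigr)=0$ for every $\delta,K>0$.
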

\begin{proof}
This follows by exactly the same argument given in \cite[Proposition 2.5]{legall-weill} in the Brownian case.
\end{proof}

In particular, the previous proposition allows us to define the law of the pair $(\rho,W)$ and the associated trees and snakes re-rooted at the minimum under $\N_0$ or $\Noo_0$ as follows. We first sample $(\rho,W)$, let $s^*$ be as in \cref{prop:minimum unique}, and then consider the processes $(\zetass_r)_{r \in [0, \sigma]}$ and $(\Wtss_r)_{r \in [0, \sigma]}$ as defined in \eqref{eqn:rerooted height and snake tip}. Note that, by \cref{prop:minimum unique}, $\Wtss_0 = \Wtss_{\sigma} = 0$, and $\Wtss_r > 0$ for all $r \in (0, \sigma)$, $\N$-almost everywhere.
 
We then let $\overline{\Ta}$ denote the tree coded by the height function $\zetass$ (i.e. using \eqref{eqn:distance from height}), and let $(\overline{Z}_a)_{a \in \overline{\Ta}}$ denote the corresponding snake. We also generalise the definition of $\I$ from \eqref{eqn:ISE def} by setting
\begin{equation}\label{eqn:ISE def rerooted}
\langle \overline{\mathcal I} , f \rangle = \int_{\overline{\Ta}} \lambda(da) \, f(\overline Z_a).
\end{equation}

\section{Snake range and lifetime}\label{sctn:snake range and lifetime}
In this section we fix a domain $D=(a,b)$ (we allow $a$ or $b$ to be infinite). We will use several quantities defined in \cref{sctn:exit measures and SMP} in terms of $D$, but for ease of notation we will suppress the dependence on $D$ (which will always be $(a,b)$).

The next two lemmas give the stable analogues of \cite[Lemma 6]{delmas2003computation} and \cite[Lemma 7]{delmas2003computation}. Note that we work under a slightly different normalisation convention (we take $\psi (\lambda) = \lambda^{\alpha}$ whereas Delmas takes $\psi (\lambda) = 2\lambda^{2}$, meaning that some constants are different).

\begin{lemma} \label{lemma:vlambdamu}
Let $L_\sigma$ denote the total exit local time from $(a,b)$ of the stable snake, i.e. $L_\sigma^{(a,b)}$ in the notation of \eqref{eqn:exit measure local time def}. For $\lambda,\mu \geq0$ and $x \in (a,b)$ we set
\begin{equation*}
v_{\lambda,\mu,a,b} (x) = \mathbb N_x \left[ 1 - \exp(-\lambda \sigma - \mu L_\sigma)\right].
\end{equation*}
The mapping $x \mapsto v_{\lambda,\mu,a,b} (x)$ satisfies the ODE
\begin{equation}\label{eqn:ODE ab local}
\begin{cases}
\frac{1}{2} v'' = v^\alpha - \lambda \quad \text{on $(0,+\infty)$},\\
\lim_{x \downarrow a} v(x) = \mu \indic{a>-\infty} + \lambda^{1/\alpha},\\
\lim_{x \uparrow b} v(x) = \mu \indic{b<+\infty} +  \lambda^{1/\alpha}.
\end{cases}
\end{equation}
\end{lemma}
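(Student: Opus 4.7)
My plan follows the strategy of \cite[Lemma 6]{delmas2003computation} for the Brownian snake, adapted to the $\alpha$-stable setting. The argument has two parts: deriving the ODE on $(a,b)$ via a local application of the special Markov property, and verifying the boundary conditions separately.

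\textbf{ODE on the interior.} I fix $x_0 \in (a,b)$ and take $\epsilon > 0$ small enough that $D_\epsilon := (x_0-\epsilon, x_0+\epsilon) \subset (a,b)$. Since $D_\epsilon$ sits strictly inside $(a,b)$, the exit local time $L_\sigma^{(a,b)}$ cannot accumulate while the snake remains inside $D_\epsilon$. Applying the special Markov property (Proposition~\ref{prop:special MP}) to $D_\epsilon$ together with the Poisson master formula (Proposition~\ref{prop:Poisson master formula}) on the sub-excursions exiting $D_\epsilon$, I get
\begin{equation*}
v(x_0) = \mathbb N_{x_0}\!\left[\, 1 - \exp\!\left(-\lambda \sigma_{\mathrm{in}} - v(x_0-\epsilon)\, Z^- - v(x_0+\epsilon)\, Z^+\right)\right],
\end{equation*}
where $\sigma_{\mathrm{in}}$ is the time the snake spends strictly inside $D_\epsilon$ and $Z^\pm := \Z^{D_\epsilon}(\{x_0 \pm \epsilon\})$. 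Taylor-expanding $v(x_0 \pm \epsilon) = v(x_0) \pm \epsilon v'(x_0) + \tfrac{\epsilon^2}{2} v''(x_0) + o(\epsilon^2)$, the $\epsilon v'$ contribution vanishes by the symmetry of $D_\epsilon$ (since $Z^+$ and $Z^-$ are exchangeable under $\mathbb N_{x_0}$). The first-moment computations $\mathbb N_{x_0}[\sigma_{\mathrm{in}}] = \epsilon^2$ and $\mathbb N_{x_0}[Z^\pm] = 1/2$, obtained from Propositions~\ref{prop:firstmoment ds} and~\ref{prop:firstmomentdLs} applied to Brownian motion exiting a symmetric interval, identify the $\lambda$ and $v''/2$ contributions. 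The nonlinearity $v^\alpha$ emerges by comparing the above identity with the \emph{unperturbed} version in which the moving boundary values $v(x_0 \pm \epsilon)$ are replaced by the constant $v(x_0)$ and $\lambda$ by $0$; this unperturbed quantity is $\mathbb N_{x_0}[1 - \exp(-v(x_0)\, L_\sigma^{D_\epsilon})]$, and a direct computation using the $\alpha$-stable scaling~\eqref{eqn:Ito scaling} applied to $D_\epsilon$ yields the expansion $v(x_0) - v(x_0)^\alpha \epsilon^2 + o(\epsilon^2)$. Subtracting and dividing by $\epsilon^2$ produces the ODE $\tfrac{1}{2} v'' = v^\alpha - \lambda$ in the limit $\epsilon \to 0$.

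\textbf{Boundary conditions.} For an infinite boundary, say $a = -\infty$, as $x \to -\infty$ the snake range stays compact on $\mathbb N_x$-almost every excursion (Theorem~\ref{thm:range snake intro} gives $\mathbb N_0[R > M] \to 0$ as $M \to \infty$), and translation invariance in the starting point forces $L_\sigma \to 0$ in $\mathbb N_x$-mass, so $v(x) \to \mathbb N_x[1 - e^{-\lambda\sigma}] = \lambda^{1/\alpha}$ by~\eqref{eqn:Laplace sigma}. For a finite boundary $a$, as $x \downarrow a$ the snake exits $(a,b)$ immediately; using the decomposition
\begin{equation*}
v(x) = \mathbb N_x[1 - e^{-\lambda\sigma}] + \mathbb N_x\!\left[e^{-\lambda\sigma}(1 - e^{-\mu L_\sigma})\right] = \lambda^{1/\alpha} + w(x),
\end{equation*}
a short-excursion analysis analogous to the one in \cite[Lemma 6]{delmas2003computation} (applying the special Markov property with $D = (a+\delta, b)$ and sending $\delta \downarrow 0$) shows that $w(x) \to \mu$, yielding the stated boundary value $\mu + \lambda^{1/\alpha}$.

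\textbf{Main obstacle.} The delicate point is producing the $v^\alpha$ nonlinearity in the ODE derivation: since the exit-measure masses $Z^\pm$ are of order $1$ rather than $\epsilon$, a naive Taylor expansion of the exponential is not valid, and the nonlinearity only appears via the subtraction against the unperturbed functional, whose $\epsilon^2$-expansion in turn requires the $\alpha$-stable scaling of $L_\sigma^{D_\epsilon}$. Controlling all error terms uniformly in $\epsilon$—in particular the fractional-moment behaviour of $L_\sigma^{D_\epsilon}$ under $\mathbb N_{x_0}$—is where the most care is required, and the finite-boundary limit $w(x) \to \mu$ also requires a delicate short-excursion analysis.
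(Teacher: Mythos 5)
Your local (infinitesimal-domain) strategy is genuinely different from the paper's proof, which is global: the paper conditions once on the $\sigma$-algebra $\mathcal{E}^{(a,b)}$ to replace the contribution of $\lambda\sigma$ outside $(a,b)$ by $\lambda^{1/\alpha}L_\sigma$, rewrites $1-e^{-\widetilde L_\sigma}$ as $\int_0^\sigma d\widetilde L_s\,e^{-\int_s^\sigma d\widetilde L_u}$ for a suitable additive functional, applies the one-sided Poisson decomposition (\cref{lemma:onesidePoisson}) and the many-to-one formulas (\cref{prop:firstmoment ds} and \cref{prop:firstmomentdLs}) to obtain the closed integral equation $v(x)=(\lambda^{1/\alpha}+\mu)-\Pi_x\left[\int_0^\tau\psi(v(\xi_h))\,dh\right]+\lambda\,\Pi_x[\tau]$, from which both the ODE and the boundary values follow at once. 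Your set-up of the identity $v(x_0)=\N_{x_0}\left[1-\exp(-\lambda\sigma_{\mathrm{in}}-v(x_0-\epsilon)Z^--v(x_0+\epsilon)Z^+)\right]$ via the special Markov property is correct, and the first-moment computations $\N_{x_0}[\sigma_{\mathrm{in}}]=\epsilon^2$ and $\N_{x_0}[Z^\pm]=1/2$ are fine.

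However, there is a genuine gap exactly where the nonlinearity enters, which you have flagged but not resolved. The claimed expansion $\N_{x_0}\left[1-\exp(-v(x_0)L_\sigma^{D_\epsilon})\right]=v(x_0)-v(x_0)^\alpha\epsilon^2+o(\epsilon^2)$ is precisely the $\lambda=0$, constant-boundary-data case of the lemma you are proving, applied to the domain $D_\epsilon$; the scaling relation \eqref{eqn:Ito scaling} only reduces it to the unit-scale domain and does not produce the coefficient $v(x_0)^\alpha$ without knowing the exact $\alpha$-stable tail behaviour of $L_\sigma^{D_1}$ under $\N_0$ near $0^+$ in its Laplace variable --- which is equivalent information to the ODE itself. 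So as written the argument is circular, or at best incomplete, at its key step. The remaining error terms are also more than routine: since $\psi(\lambda)=\lambda^\alpha$ with $\alpha<2$, the exit mass $Z^++Z^-$ has infinite second moment under $\N_{x_0}$, so controlling the cross terms such as $\N_{x_0}\left[(Z^+-Z^-)^2e^{-v(Z^++Z^-)}\right]$ and $\N_{x_0}\left[(Z^++Z^-)(1-e^{-v(Z^++Z^-)})\right]$ uniformly in $\epsilon$ requires quantitative tail estimates you do not supply. Finally, your verification of the boundary condition at an infinite endpoint invokes \cref{thm:range snake intro}, which in the paper is a downstream consequence of this lemma; that circularity is repairable (one only needs $\N_0(\sup\mathcal{R}\ge M)\downarrow 0$, which can be obtained independently), but the finite-endpoint limit $w(x)\to\mu$ is likewise only asserted. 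The paper's integral-equation route avoids all of these issues because the nonlinearity $\psi(v)$ is produced exactly by the subordinator Laplace transforms in \eqref{eqn:U Laplace transform cts} rather than by an asymptotic expansion.
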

\begin{proof}
We follow the proof of the Brownian case given in \cite[Lemma 6]{delmas2003computation}.
Suppose $\lambda$ and $\mu >0$.
As in \cref{sctn:exit measures and SMP}, for any $w \in \mathcal W$, we denote by $\tau(w)$ the first exit time from $(a,b)$ of the path $w$ (we set $\tau(w) = +\infty$ if $w$ does not exit $(a,b)$). Recall also from \cref{sctn:exit measures and SMP} the definition of the time change $(\theta_s)_s$ , the exit local times $(L_s)_s$ and of the snake $\widetilde W$ obtained from the trajectories of $W$ before they exit $(a,b)$. From the definition of $\theta$, one can see that the quantity
\[
\int_0^\sigma \indic{\tau(W_s) = +\infty } ds
\]
is $\mathbb N_x$-a.s. equal to the lifetime of the snake $\widetilde W$, and is therefore measurable with respect to the $\sigma$-algebra $\mathcal E$ (recall this is the canonical $\sigma$-algebra for $\widetilde W$).
Therefore, for $\lambda,\mu,x >0$ we have:
\begin{align*}
v_{\lambda,\mu,a,b} (x) &=
\mathbb N_x \left[ 1 - \exp \left(-\lambda \int_0^\sigma \indic{\tau(W_s) = +\infty } ds -\lambda \int_0^\sigma \indic{ \tau(W_s) < +\infty } ds - \mu L_\sigma \right) \right], \\
&=
\mathbb N_x \left[ 1 - \exp \left(-\lambda \int_0^\sigma \indic{\tau(W_s) = +\infty } ds  - \mu L_\sigma \right) 
\mathbb N_x \left[ \exp \left( -\lambda \int_0^\sigma \indic{ \tau(W_s) < +\infty} ds \right) \middle| \mathcal E \right]
\right].
\end{align*}
Denoting by $\mathcal Z$ the exit measure from $(a,b)$ of the stable snake, \cref{prop:Poisson master formula} and \cref{prop:special MP} (the special Markov property) imply that
\begin{align*}
\mathbb N_x \left[ \exp \left( -\lambda \int_0^\sigma \indic{ \tau(W_s) < +\infty} ds \right) \middle| \mathcal E \right]
 = \exp \left( - \int \mathcal Z(dy) \, \mathbb N_y \left[ 1 - e^{-\lambda \sigma} \right] \right)
& = \exp \left( - \lambda^{1/ \alpha} L_\sigma \right),
\end{align*}
where we used \eqref{eqn:Laplace sigma}.

We now define an additive functional of the stable snake $(\widetilde L_s)_{s \geq 0}$ by setting
\begin{equation}\label{eqn:additive functional def}
d \widetilde L_s = \lambda \indic{\tau(W_s) = +\infty } ds  + \left( \lambda^{1/\alpha} + \mu \right) dL_s.
\end{equation}
By the previous calculations, we have that (note that the latter expression can be integrated directly)
\begin{align*}
v_{\lambda,\mu,a,b} (x) &=
\mathbb N_x \left[ 1 - \exp \left(- \widetilde L_\sigma \right) \right] =
\mathbb N_x \left[
\int_0^\sigma d\widetilde L_s \exp \left(- \int_s^\sigma d \widetilde L_u \right)
\right].
\end{align*}
Now recall from Section \ref{section:stablesnake} that $\Pb^*_{\mu, w}$ denotes the law of $(\rho, W)$ under $\Pb_{\mu, w}$ killed when $\rho$ first hits $0$. Using linearity of expectation and the Markov property, the previous identity can be rewritten as
\[
v_{\lambda,\mu,a,b} (x) =
\mathbb N_x \left[
\int_0^\sigma d\widetilde L_s \,  \mathbb E_{\rho_s,W_s}^* \left [ \exp \{ - \widetilde L_\sigma \}\right]
\right].
\]
The Poisson point process decomposition of Lemma \ref{lemma:onesidePoisson} along with \cref{prop:Poisson master formula} then yields
\begin{align*}
 \mathbb E_{\rho_s,W_s}^* \left [ \exp \{ - \widetilde L_\sigma \} \right] &= \exp \left( - \int_0^\infty \rho_s(dh) \mathbb N_{W_s(h)} \left[1 - e^{-\widetilde L_\sigma} \right]\right),\\
& = \exp \left( - \int_0^\infty \rho_s(dh) \, v_{\lambda,\mu,a,b}(W_s(h)) \right).
\end{align*}
Substituting back into the previous display and then using \eqref{eqn:additive functional def}, we now have
\begin{align*}
v_{\lambda,\mu,a,b} (x) &=
\mathbb N_x \left[
\int_0^\sigma d\widetilde L_s \, 
\exp \left( - \int_0^\infty \rho_s(dh) \, v_{\lambda,\mu,a,b}(W_s(h)) \right)
\right],\\
&=
\left( \lambda^{1/\alpha} + \mu \right) \, \mathbb N_x \left[
\int_0^\sigma d L_s \, 
\exp \left( - \int_0^\infty \rho_s(dh) \, v_{\lambda,\mu,a,b}(W_s(h)) \right)
\right] \\
& \quad \quad \quad +
\lambda \, \mathbb N_x \left[
\int_0^\sigma ds \, \indic{\tau(W_s) = + \infty}
\exp \left( - \int_0^\infty \rho_s(dh) \, v_{\lambda,\mu,a,b}(W_s(h)) \right)
\right].
\end{align*}
Letting $J$ be as in \cref{sctn:spinal decomp}, we can express the two expectations with respect to $\mathbb N_x$ involved in the last display using Proposition \ref{prop:firstmomentdLs} and Proposition \ref{prop:firstmoment ds} respectively and therefore write:
\begin{align*}
v_{\lambda,\mu,a,b} (x) &=
\left( \lambda^{1/\alpha} + \mu \right) \, 
E^0 \otimes \Pi_x \left[ \indic{\tau < +\infty} \exp \left( - \int_0^\infty J_{\tau}(dh) v_{\lambda,\mu,a,b} (\xi_h) \right) \right] \\
& \quad \quad \quad +
\lambda \, \int_0^\infty dh \, E^0 \otimes \Pi_x \left[ \indic{\tau > h} \exp \left( - \int_0^\infty J_h(dy) \, v_{\lambda,\mu,a,b}(\xi_y) \right) \right].
\end{align*}
Notice that in the first term, $\Pi_x$-a.s., one has $\tau < +\infty$.
In addition, the two expectations with respect to the random measures $J$ can be expressed with the help of the Laplace transform of their underlying subordinators (recall \eqref{eqn:U Laplace transform} and \eqref{eqn:U Laplace transform cts}) that have exponent $\widetilde \psi (\lambda) = \lambda^{\alpha -1}$:
\begin{align} \label{eq:vexprint}
\begin{split}
v_{\lambda,\mu,a,b} (x)  =
&\left( \lambda^{1/\alpha} + \mu \right) \, 
\Pi_x \left[ \exp \left( - \int_0^\tau \widetilde \psi \left( v_{\lambda,\mu,a,b} (\xi_h) \right) \, dh \right) \right] \\
&\qquad +
\lambda \, \Pi_x \left[ \int_0^\tau dh \, \exp \left( - \int_0^h dy \, \widetilde \psi \left( v_{\lambda,\mu,a,b}(\xi_y) \right) \right) \right].
\end{split}
\end{align}

The ODE of the proposition can now be obtained using standard Feynman-Kac calculus and properties of Brownian motion. We reproduce these arguments here for the sake of completeness. Firstly, one can write
\begin{align*}
v_{\lambda,\mu,a,b} (x) &=
\left( \lambda^{1/\alpha} + \mu \right)
 - \left( \lambda^{1/\alpha} + \mu \right) \, 
\Pi_x \left[ 
\int_0^\tau dh \, \widetilde \psi \left( v_{\lambda,\mu,a,b} (\xi_h) \right)
\exp \left( - \int_h^\tau \widetilde \psi \left( v_{\lambda,\mu,a,b} (\xi_y) \right) \, dy \right) \right] \\
& \qquad +
\lambda \, \Pi_x \left[ \int_0^\tau dh \, \exp \left( - \int_0^h dy \, \widetilde \psi \left( v_{\lambda,\mu,a,b}(\xi_y) \right) \right) \right], \\
&=
\left( \lambda^{1/\alpha} + \mu \right)
 -  
\Pi_x \left[ 
\int_0^\tau dh \, \widetilde \psi \left( v_{\lambda,\mu,a,b} (\xi_h) \right) \left( \lambda^{1/\alpha} + \mu \right) \,
\Pi_{\xi_h} \left[ 
\exp \left( - \int_0^\tau \widetilde \psi \left( v_{\lambda,\mu,a,b} (\xi_y) \right) \, dy \right) \right] \right] \\
& \qquad +
\lambda \, \Pi_x \left[ \int_0^\tau dh \, \exp \left( - \int_0^h dy \, \widetilde \psi \left( v_{\lambda,\mu,a,b}(\xi_y) \right) \right) \right],
\end{align*}
where we only used the Markov property of Brownian motion and linearity of expectation to get the second identity. Using Equation \eqref{eq:vexprint} with $x=\xi_h$ in the last display to substitute for $\left( \lambda^{1/\alpha} + \mu \right)
\Pi_{\xi_h} \left[ 
\exp \left( - \int_0^\tau \widetilde \psi \left( v_{\lambda,\mu,a,b} (\xi_y) \right) \, dy \right) \right]$ in the first step, and then re-indexing time in the second step, we get
\begin{align*}
v_{\lambda,\mu,a,b} (x) &=
\left( \lambda^{1/\alpha} + \mu \right)
  - \Pi_x \left[ 
\int_0^\tau dh \, \widetilde \psi \left( v_{\lambda,\mu,a,b} (\xi_h) \right) \cdot v_{\lambda,\mu,a,b} (\xi_h)
 \right]\\
& \qquad \qquad +
\lambda \, \Pi_x \left[ \int_0^\tau dh \, \widetilde \psi \left( v_{\lambda,\mu,a,b} (\xi_h) \right) \, 
\Pi_{\xi_h} \left[
\int_0^\tau db
\, \exp \left( - \int_0^b da \, \widetilde \psi \left( v_{\lambda,\mu,a,b}(\xi_a) \right) \right) \right] \right]\\
& \qquad \qquad +
\lambda \, \Pi_x \left[ \int_0^\tau dh \, \exp \left( - \int_0^h da \, \widetilde \psi \left( v_{\lambda,\mu,a,b}(\xi_a) \right) \right) \right], \\
&=
\left( \lambda^{1/\alpha} + \mu \right)
  - \Pi_x \left[ 
\int_0^\tau dh \, \psi \left( v_{\lambda,\mu,a,b} (\xi_h) \right)
 \right]\\
& \qquad \qquad +
\lambda \, \Pi_x \left[ \int_0^\tau dh \, \widetilde \psi \left( v_{\lambda,\mu,a,b} (\xi_h) \right) \, 
\int_h^\tau dz
\, \exp \left( - \int_h^z dy \, \widetilde \psi \left( v_{\lambda,\mu,a,b}(\xi_y) \right) \right) \right]\\
& \qquad \qquad +
\lambda \, \Pi_x \left[ \int_0^\tau dh \, \exp \left( - \int_0^h dy \, \widetilde \psi \left( v_{\lambda,\mu,a,b}(\xi_y) \right) \right) \right].
\end{align*}
From there, a straightforward application of Fubini's theorem to the second integral gives
\begin{align*}
 &\Pi_x \left[ \int_0^\tau dh \, \widetilde \psi \left( v_{\lambda,\mu,a,b} (\xi_h) \right) \, 
\int_h^\tau dz
\, \exp \left( - \int_h^z dy \, \widetilde \psi \left( v_{\lambda,\mu,a,b}(\xi_y) \right) \right) \right] \\
&\qquad \qquad =  \Pi_x \left[ \int_0^\tau dz \, \int_0^z dh \, \widetilde \psi \left( v_{\lambda,\mu,a,b} (\xi_h) \right)
\, \exp \left( - \int_h^z dy \, \widetilde \psi \left( v_{\lambda,\mu,a,b}(\xi_y) \right) \right) \right] \\
&\qquad \qquad =  \Pi_x \left[ \int_0^\tau dz \, 
\,\left(1 - \exp \left( - \int_0^z dy \, \widetilde \psi \left( v_{\lambda,\mu,a,b}(\xi_y) \right) \right)\right)\right].
\end{align*}
Substituting back into the previous display we deduce that
\begin{align*}
v_{\lambda,\mu,a,b} (x) &= \left( \lambda^{1/\alpha} + \mu \right)
  - \Pi_x \left[ 
\int_0^\tau dh \, \psi \left( v_{\lambda,\mu,a,b} (\xi_h) \right)
 \right] + \lambda \Pi_x \left[ \tau \right].
\end{align*}
The proposition then follows using the links between Green's functions, harmonic analysis and Brownian motion (see for instance page 72 of \cite{LeGallBook} for a similar calculation), and recalling \eqref{eqn:Laplace sigma}.

The case $\lambda$ or $\mu$ equal to $0$ follows from a direct application of monotone convergence.
\end{proof}

Theorem~\ref{theorem:main} is now a rather easy consequence of Lemma~\ref{lemma:vlambdamu}:

\begin{proof}[Proof of Theorem~\ref{theorem:main}]
Since $\mathbb N_x$-a.s. one has $\{\mathcal R \subset (a,b)  \} = \{ L_\sigma = 0 \}$, we have by monotone convergence for all $x>0$
\[
v_{\lambda,a,b} (x ) = \lim_{\mu \to + \infty} v_{\lambda,\mu, a, b} (x ).
\]
Since set of non-negative solutions of the ODE $\frac{1}{2} v'' = v^\alpha - \lambda$ is closed under pointwise convergence (see Proposition 9 in Section V.3 of \cite{LeGallBook}), we deduce from \cref{lemma:vlambdamu} that $v_{\lambda,a,b} $ satisfies the same ODE. Calculating the boundary values of $v_{\lambda,a,b}$ is straightforward, as well as the fact that the ODE with these boundary values has a unique non-negative solution.
\end{proof}

The case $a=0$ and $b = + \infty$ is of special interest:

\begin{corollary}
For $\lambda \geq 0$ and $x>0$ we set
\begin{equation}
v_{\lambda} (x) = \mathbb N_x \left[ 1 - \indic{0 \notin \mathcal R } \exp(-\lambda \sigma)\right].
\end{equation}
The mapping $x \mapsto v_{\lambda} (x)$ is the unique non-negative solution of the ODE
\begin{equation} \label{eq:ODEvlamda}
\begin{cases}
\frac{1}{2} v'' = v^\alpha - \lambda \quad \text{on $(0,+\infty)$},\\
\lim_{x \to 0+} v(x) = + \infty,\\
\lim_{x \to +\infty} v(x) = \lambda^{1/\alpha}.
\end{cases}
\end{equation}
\end{corollary}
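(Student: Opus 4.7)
The plan is to derive this corollary as an immediate specialization of Theorem~\ref{theorem:main} to the domain $(a,b)=(0,+\infty)$. The first step is to identify $v_\lambda$ with $v_{\lambda,0,+\infty}$. Since $\mathbb{N}_x$-almost everywhere the range $\mathcal{R}=[\inf_{a\in\Ta}Z_a,\sup_{a\in\Ta}Z_a]$ is a bounded closed interval (in particular $\sup\mathcal{R}<+\infty$ $\mathbb{N}_x$-a.e.), the event $\{\mathcal{R}\subset(0,+\infty)\}$ reduces to the single non-trivial constraint $\inf\mathcal{R}>0$, i.e.\ $0\notin\mathcal{R}$. Thus $v_\lambda(x)=v_{\lambda,0,+\infty}(x)$ for every $x>0$, and Theorem~\ref{theorem:main} applies verbatim with these parameters.

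With this identification in hand, Theorem~\ref{theorem:main} directly yields the ODE $\tfrac{1}{2}v''=v^\alpha-\lambda$ on $(0,+\infty)$, together with the two boundary conditions: the finite lower endpoint $a=0$ gives $\lim_{x\downarrow 0}v_\lambda(x)=+\infty$, and the infinite upper endpoint $b=+\infty$ gives $\lim_{x\uparrow +\infty}v_\lambda(x)=\lambda^{1/\alpha}$. Both are consistent with direct heuristics: for large $x$ the snake avoids $0$ with vanishing $\mathbb{N}_x$-mass (cf.\ Theorem~\ref{thm:range snake intro}), so $v_\lambda(x)$ degenerates to $\mathbb{N}_x[1-e^{-\lambda\sigma}]=\lambda^{1/\alpha}$ via \eqref{eqn:Laplace sigma}; for $x$ close to $0$, the snake hits $0$ on a set of arbitrarily large $\mathbb{N}_x$-mass, and the contribution of the infinite mass $\mathbb{N}_x[1]$ takes over.

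Uniqueness of the non-negative solution to the ODE \eqref{eq:ODEvlamda} with the prescribed boundary values is classical and is already the key tool inside the proof of Theorem~\ref{theorem:main}: one applies \cite[Proposition~9, Section V.3]{LeGallBook} (the same closedness/uniqueness result used to pass from the $\mu$-dependent ODE in Lemma~\ref{lemma:vlambdamu} to the ODE in Theorem~\ref{theorem:main} as $\mu\to\infty$). Hence no new analytic argument is needed. The only small subtlety in the whole proof is the measure-theoretic identification $\{\mathcal{R}\subset(0,+\infty)\}=\{0\notin\mathcal{R}\}$ of the first step, but as noted this follows immediately from the $\mathbb{N}_x$-a.e.\ boundedness of $\mathcal{R}$; there is no genuine obstacle.
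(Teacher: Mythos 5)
Your proposal is correct and matches the paper's treatment: the paper states this corollary without proof, as an immediate specialization of Theorem~\ref{theorem:main} to $(a,b)=(0,+\infty)$, with the identification $\{\mathcal R\subset(0,+\infty)\}=\{0\notin\mathcal R\}$ (which holds simply because $\mathcal R$ is an interval containing $x>0$, so the only constraint is $\inf\mathcal R>0$; boundedness above is not actually needed since $b=+\infty$). The uniqueness assertion is likewise handled exactly where you say it is, inside the proof of Theorem~\ref{theorem:main} and again via the explicit $F^{-1}$ construction in Corollary~\ref{cor:v(x) expansion}.
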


The special case $\lambda =0$ of ODE~\eqref{eq:ODEvlamda} gives the first statement of Theorem~\ref{thm:range snake intro}, for $x >0$ (this can be verified by direct substitution):
\begin{equation}\label{eq:rangesnake}
\mathbb N_x \left[ \indic{0 \in \mathcal R } \right] = \left( \frac{\alpha + 1}{(\alpha - 1)^2} \, \frac{1}{x^2} \right)^{\frac{1}{\alpha -1}}.
\end{equation}
Unfortunately, the solution of ODE~\eqref{eq:ODEvlamda} does not have an explicit expression for $\lambda \neq 0$. However, we can obtain its asymptotic behaviour near $0$ using only elementary calculus:
\begin{corollary}\label{cor:v(x) expansion}
For every $\lambda>0$, we have, as $x \to 0^+$:
\begin{equation}\label{eq:v0expr}
v_{\lambda} (x) = \left( \frac{\alpha + 1}{(\alpha - 1)^2} \, \frac{1}{x^2}\right)^{\frac{1}{\alpha -1}} \, \left( 1 + 
\frac
{\lambda \, (\alpha -1)^{\frac{2 \alpha}{\alpha -1}}}
{(3 \alpha -1) (\alpha + 1)^{\frac{1}{\alpha -1}}}
\, x^{\frac{2 \alpha}{\alpha -1}} + o \left( x^{\frac{2 \alpha}{\alpha -1}} \right) \right).
\end{equation}
\end{corollary}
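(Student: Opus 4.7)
The plan is to perform a perturbative expansion of $v_\lambda$ around the explicit $\lambda=0$ solution $v_0(x) := cx^{-2/(\alpha-1)}$ with $c := \bigl(\tfrac{\alpha+1}{(\alpha-1)^2}\bigr)^{1/(\alpha-1)}$, which by direct substitution solves $\tfrac12 v_0'' = v_0^\alpha$ and matches the boundary conditions of~\eqref{eq:ODEvlamda} at $0$ and $+\infty$ when $\lambda=0$.

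A first coarse sandwich is obtained as follows. Monotonicity of $\lambda \mapsto v_\lambda$, which is immediate from the probabilistic representation $v_\lambda(x) = \mathbb N_x[1 - \mathbf 1_{0 \notin \mathcal R}\,e^{-\lambda \sigma}]$, gives $v_\lambda \ge v_0$. Moreover, the inequality $(v_0 + \lambda^{1/\alpha})^\alpha \ge v_0^\alpha + \lambda$ (valid for $\alpha>1$) implies that $\widetilde v := v_0 + \lambda^{1/\alpha}$ is a super-solution of~\eqref{eq:ODEvlamda} with matching boundary values, so the standard comparison principle (the difference $v_\lambda - \widetilde v$ cannot have an interior positive maximum) yields $v_0 \le v_\lambda \le v_0 + \lambda^{1/\alpha}$. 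Writing $v_\lambda = v_0(1+u)$ with $u \ge 0$, this gives the preliminary bound $u(x) = O(x^{2/(\alpha-1)})$ as $x \to 0^+$.

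Next I would derive the leading-order linearised equation for $u$. Substituting $v_\lambda = v_0(1+u)$ into $\tfrac12 v_\lambda'' = v_\lambda^\alpha - \lambda$, using $\tfrac12 v_0'' = v_0^\alpha$ and $(1+u)^\alpha - (1+u) = (\alpha-1)u + O(u^2)$, and then multiplying through by $\tfrac{2}{c}x^{2/(\alpha-1)}$, one obtains
\[
u'' - \tfrac{4}{\alpha-1}\,x^{-1} u' - \tfrac{2(\alpha+1)}{\alpha-1}\,x^{-2} u \;=\; -\tfrac{2\lambda}{c}\,x^{2/(\alpha-1)} + O\!\bigl(x^{(6-2\alpha)/(\alpha-1)}\bigr),
\]
where the error term uses the preliminary bound together with $v_0^{\alpha-1} = O(x^{-2})$, and its exponent exceeds $2/(\alpha-1)$ precisely because $\alpha<2$. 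The Euler operator on the left has indicial polynomial $(\alpha-1)r^2 - (\alpha+3)r - 2(\alpha+1)$, whose discriminant is the perfect square $(3\alpha+1)^2$ with roots $r_+ = 2(\alpha+1)/(\alpha-1)$ and $r_- = -1$. As the exponent $2\alpha/(\alpha-1)$ of the targeted particular solution is not a root, the ansatz $u_p(x) = Bx^{2\alpha/(\alpha-1)}$ works: substituting it in, the bracket collapses to $-2B(3\alpha-1)/(\alpha-1)^2$, so matching the coefficient of $x^{2/(\alpha-1)}$ gives
\[
B \;=\; \frac{\lambda(\alpha-1)^2}{c(3\alpha-1)} \;=\; \frac{\lambda\,(\alpha-1)^{2\alpha/(\alpha-1)}}{(3\alpha-1)(\alpha+1)^{1/(\alpha-1)}},
\]
exactly the constant in the claim.

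The main obstacle is rigorously upgrading this formal match to an actual $o$-expansion. My plan is to exploit the Euler structure directly: the general solution of the linearised equation $Lu = f$ has the form $u_p + C_+ x^{r_+} + C_- x^{r_-}$, and the boundary behaviour $u(0^+) = 0$ (from the preliminary bound) forces $C_- = 0$ since $r_- = -1<0$, while the remaining homogeneous contribution $C_+ x^{r_+}$ is automatically $o(x^{2\alpha/(\alpha-1)})$ because $r_+ > 2\alpha/(\alpha-1)$. The nonlinear error $O(u^2)$, controlled by the preliminary bound, contributes to $f$ at order $x^{(6-2\alpha)/(\alpha-1)}$ and thus to $u$ at order $x^{4/(\alpha-1)} = o(x^{2\alpha/(\alpha-1)})$, and a short Picard-type bootstrap on the integral form $u = L^{-1}f$ closes the argument. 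An equivalent and perhaps more transparent alternative would be to verify directly that $v_0(x)\bigl(1+(B\pm\varepsilon)x^{2\alpha/(\alpha-1)}\bigr)$ are super/sub-solutions on some $(0,x_\varepsilon)$ --- the $\pm\varepsilon$ contribution produces a definite sign at order $x^{2/(\alpha-1)}$ that dominates the nonlinear error --- and invoke the comparison principle.
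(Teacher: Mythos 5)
Your approach is correct in substance but genuinely different from the paper's. The paper never linearises: it multiplies the ODE by $v'$ to obtain the first integral $\tfrac14 (v')^2 = \tfrac{v^{\alpha+1}}{\alpha+1} - \lambda v + \tfrac{\alpha}{\alpha+1}\lambda^{(\alpha+1)/\alpha}$ (the constant being fixed by $v \to \lambda^{1/\alpha}$, $v' \to 0$ at $+\infty$), writes $v_\lambda = F^{-1}$ with $F(y) = \int_y^\infty \bigl(2\sqrt{\tfrac{u^{\alpha+1}}{\alpha+1} - \lambda u + \tfrac{\alpha}{\alpha+1}\lambda^{(\alpha+1)/\alpha}}\bigr)^{-1}du$, Taylor-expands the integrand at large $u$ to get $F(y) = \tfrac{(\alpha+1)^{1/2}}{\alpha-1}y^{-(\alpha-1)/2} + \tfrac{\lambda(\alpha+1)^{3/2}}{2(3\alpha-1)}y^{-(3\alpha-1)/2} + o(\cdot)$, and inverts $F(v_\lambda(x))=x$. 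That quadrature route gives the two-term expansion with essentially no analytic overhead (and simultaneously re-proves existence and uniqueness for~\eqref{eq:ODEvlamda}); your linearisation around $v_0$ is more work but is the kind of argument that survives when the equation has no first integral. I checked your indicial roots $r_+ = 2(\alpha+1)/(\alpha-1)$, $r_- = -1$, the collapse of the bracket to $-2(3\alpha-1)/(\alpha-1)^2$, and the resulting $B$: all correct, and the bookkeeping of error exponents ($4/(\alpha-1) > 2\alpha/(\alpha-1)$ and $r_+ > 2\alpha/(\alpha-1)$, both using $\alpha<2$) is right, so the variation-of-parameters/bootstrap route does close.

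Two caveats. First, your justification of the upper bound $v_\lambda \le v_0 + \lambda^{1/\alpha}$ via the comparison principle is circular at the left endpoint: to run the maximum-principle argument on $(0,\infty)$ you need $\limsup_{x\to 0^+}(v_\lambda - v_0 - \lambda^{1/\alpha}) \le 0$, which is essentially the inequality you are trying to prove, since both functions blow up at $0$. Fortunately the bound is immediate probabilistically: $1 - \indic{0\notin\mathcal R}e^{-\lambda\sigma} \le (1-e^{-\lambda\sigma}) + \indic{0\in\mathcal R}$, and integrating under $\mathbb N_x$ gives $v_\lambda \le \lambda^{1/\alpha} + v_0$ directly from \eqref{eqn:Laplace sigma} and \eqref{eq:rangesnake}. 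Use that instead. Second, the same boundary issue afflicts your proposed sub/super-solution alternative: comparing $v_\lambda$ with $v_0(1+(B\pm\varepsilon)x^{2\alpha/(\alpha-1)})$ on $(0,x_\varepsilon)$ requires an ordering as $x\to 0^+$ that is not available at the relevant order without first improving the bound on $u$ (the difference is only $O(1)$ there a priori), so that variant would need an extra singular corrector such as a multiple of $v_0\, x^{r_-}$; stick with the variation-of-parameters argument, where the a priori bound $u(0^+)=0$ genuinely kills the $C_- x^{-1}$ mode.
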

\begin{proof}
In the whole proof, $\lambda >0$ is fixed. For $y > \lambda^{1/\alpha}$, we set
\[
F(y) = \int_y^\infty \frac{du}{2 \sqrt{ \frac{u^{\alpha + 1}}{\alpha +1} - \lambda u + \frac{\alpha}{\alpha +1} \lambda^{\frac{\alpha +1}{\alpha}}}}.
\]
One can check that this function is well-defined, non-negative, decreasing, that $\lim_{y \to +\infty} f(y) = 0$ and that $\lim_{y \downarrow \lambda^{1/\alpha}} f(y) = + \infty$. We now set for $x >0$
\[
v(x) = F^{-1}(x).
\]
A direct calculation shows that the function $v$ is then the unique non-negative solution of \eqref{eq:ODEvlamda} and hence we can obtain an asymptotic expansion for $v$ at $0^+$ through an expansion for $F$ at $+ \infty$.

Using a Taylor expansion for the integrand and then integrating we see that, as $y \to +\infty$,
\[
F(y) = \frac{(\alpha + 1)^{1/2}}{\alpha -1} \, \frac{1}{y^{\frac{\alpha -1}{2}}} + \frac{\lambda \, (\alpha + 1)^{3/2}}{2 (3 \alpha -1)} \, \frac{1}{y^{\frac{3 \alpha -1}{2}}} + o \left( \frac{1}{y^{\frac{3 \alpha -1}{2}}} \right).
\]
Using this expansion in the identity $F(v(x)) = x$ for $x >0$ yields the desired result.
\end{proof}

\section{Computation of moments}

In this section we calculate some of the moments of the range of the snake, following the calculations of Delmas \cite{delmas2003computation} in the Brownian case.
Define
\[
R=\sup \{x \in \R: x \in \mathcal{R}\}, \quad L=\inf \{x \in \R: x \in \mathcal{R}\},
\]
and for $\lambda \geq 0$,
\begin{align*}
H_{\lambda}(x) = \int_0^{\infty}e^{-\lambda s} \frac{\N^{(1)}\left[ R \geq xs^{-\frac{\alpha-1}{2\alpha}}  \right]}{\alpha \Gamma(1 - \frac{1}{\alpha}) s^{\frac{1}{\alpha}+1}} ds.
\end{align*}

We start with a lemma, analogous to Proposition~1 of \cite{delmas2003computation}.

\begin{lemma} \label{lem:almost Laplace transform range} 
For any $\lambda\geq 0$,
\begin{align*}
H_{\lambda}(x) = v_{\lambda}(x) - \lambda^{1/\alpha}.
\end{align*}
\end{lemma}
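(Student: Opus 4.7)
The strategy is to rewrite $v_{\lambda}(x) - \lambda^{1/\alpha}$ as an expression that directly mirrors the integrand in $H_\lambda(x)$, using only translation and reflection invariance of the snake, then the disintegration and scaling of $\N_0$.

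First I would split the defining integrand of $v_\lambda$ as
\begin{equation*}
1 - \indic{0 \notin \mathcal R}\, e^{-\lambda \sigma} = \bigl(1 - e^{-\lambda \sigma}\bigr) + \indic{0 \in \mathcal R}\, e^{-\lambda \sigma},
\end{equation*}
so that by \eqref{eqn:Laplace sigma},
\begin{equation*}
v_\lambda(x) - \lambda^{1/\alpha} = \N_x\!\left[ \indic{0 \in \mathcal R}\, e^{-\lambda \sigma}\right].
\end{equation*}
The next step is to transfer this quantity from $\N_x$ to $\N_0$. From the construction in Section \ref{section:stablesnake}, the law of $(Z_t - x)_t$ under $\N_x$ equals that of $(Z_t)_t$ under $\N_0$ (the underlying tree and $\sigma$ are unchanged), so the range under $\N_x$ is $x + \mathcal R$ and $\{0 \in \mathcal R \text{ under } \N_x\} = \{-x \in \mathcal R \text{ under } \N_0\}$. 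For $x > 0$, since $Z_0 = 0 \in \mathcal R$ under $\N_0$, this event reduces to $\{L \leq -x\} = \{-L \geq x\}$. Reflecting the Brownian spatial displacements (which leaves the underlying tree, hence $\sigma$, unchanged) shows that the pair $(-L, \sigma)$ has the same law as $(R, \sigma)$ under $\N_0$. Hence
\begin{equation*}
v_\lambda(x) - \lambda^{1/\alpha} = \N_0\!\left[\indic{R \geq x}\, e^{-\lambda \sigma}\right].
\end{equation*}

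Finally I would disintegrate via \eqref{eqn:Ito measure integrate s}:
\begin{equation*}
\N_0\!\left[\indic{R \geq x}\, e^{-\lambda \sigma}\right] = \int_0^\infty \frac{e^{-\lambda s}\, \N^{(s)}[R \geq x]}{\alpha \Gamma\!\left(1 - \tfrac{1}{\alpha}\right) s^{\frac{1}{\alpha}+1}}\, ds,
\end{equation*}
and then use the scaling relation \eqref{eqn:Ito scaling}. Taking the scaling parameter equal to $1/s$, the map $\Phi_{1/s}$ pushes $\N^{(s)}$ forward to $\N^{(1)}$ and sends $R$ to $s^{-(\alpha-1)/(2\alpha)} R$, so $\N^{(s)}[R \geq x] = \N^{(1)}[R \geq x\, s^{-(\alpha-1)/(2\alpha)}]$. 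Substituting this identity back yields exactly $H_\lambda(x)$.

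The argument is largely bookkeeping: the only points that demand care are the joint invariance of $(L,\sigma)$ and $(R,\sigma)$ under the global sign flip of the Brownian displacements (which I expect to be the main, though entirely standard, obstacle), and the correct identification of the scaling exponent $(\alpha-1)/(2\alpha)$ for $R$ under $\Phi_{1/s}$. The case $\lambda = 0$ follows from the $\lambda > 0$ case by monotone convergence, or equally from the same computation with $e^{-\lambda\sigma}$ replaced by $1$.
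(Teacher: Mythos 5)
Your proposal is correct and follows essentially the same route as the paper: the paper's proof begins by writing $v_\lambda(x)=\mathbb N_0\bigl[1-\indic{R<x}e^{-\lambda\sigma}\bigr]$ (implicitly using the translation and reflection invariance you spell out), then disintegrates over $\sigma$ via \eqref{eqn:Ito measure integrate s}, rescales $\N^{(s)}$ to $\N^{(1)}$ with the same exponent $s^{-\frac{\alpha-1}{2\alpha}}$, and identifies the $\lambda^{1/\alpha}$ term from \eqref{eqn:Laplace sigma}, handling $\lambda=0$ by monotone convergence. The only difference is that you make explicit the symmetry step that the paper leaves tacit.
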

\begin{proof}
Take $\lambda>0$. Using the \Ito scaling property (point (ii) just above \eqref{eqn:Ito measure integrate s}) and  \eqref{eqn:Laplace sigma} gives
\begin{align*}
v_{\lambda} (x) = \mathbb N_0 \left[ 1 - \indic{R<x } \exp(-\lambda \sigma)\right] &= \int_0^{\infty} \frac{\N^{(s)}\left[ 1 - \indic{R<x } \exp(-\lambda \sigma)\right]}{\alpha \Gamma(1 - \frac{1}{\alpha}) s^{\frac{1}{\alpha}+1}} ds \\
&=  \int_0^{\infty} \frac{\N^{(1)}\left[ 1 - \indic{R<xs^{-\frac{2\alpha}{\alpha-1}} } \exp(-\lambda s)\right]}{\alpha \Gamma(1 - \frac{1}{\alpha}) s^{\frac{1}{\alpha}+1}} ds \\
&=  \int_0^{\infty} \frac{\left[ 1 - \exp(-\lambda s)\right]}{\alpha \Gamma(1 - \frac{1}{\alpha}) s^{\frac{1}{\alpha}+1}} ds +  \int_0^{\infty}e^{-\lambda s} \frac{\N^{(1)}\left[ R \geq xs^{-\frac{\alpha-1}{2\alpha}}  \right]}{\alpha \Gamma(1 - \frac{1}{\alpha}) s^{\frac{1}{\alpha}+1}} ds \\
&=  \lambda^{1/\alpha} +  \int_0^{\infty}e^{-\lambda s} \frac{\N^{(1)}\left[ R \geq xs^{-\frac{\alpha-1}{2\alpha}}  \right]}{\alpha \Gamma(1 - \frac{1}{\alpha}) s^{\frac{1}{\alpha}+1}} ds.
\end{align*}
The same holds for $\lambda=0$ using monotone convergence.
\end{proof}

This allows us to bound and compute some of the moments of $R$ in the following two corollaries.

\begin{corollary}
For all $\beta>0$, $\Noo (R^{\beta}) < \infty$. Moreover, for $\beta> \frac{2}{\alpha-1}$, 
\[
\Noo (R^{\beta}) = \frac{\alpha\Gamma\left( 1 -\frac{1}{\alpha}\right)\beta C_{\beta}}{\Gamma \left(\frac{\beta (\alpha-1)-2}{2\alpha}\right)},
\]
where $ C_{\beta} =\int_0^{\infty} y^{\beta-1}(v_{1}(y)-1) dy$.
\end{corollary}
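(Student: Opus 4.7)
The plan is to compute the Mellin-type integral $\int_0^\infty x^{\beta-1} H_\lambda(x)\, dx$ in two different ways, using Lemma~\ref{lem:almost Laplace transform range} as the key input. One expression will involve $C_\beta$ (via a scaling that reduces $v_\lambda$ to $v_1$), and the other will involve $\Noo(R^\beta)$ (via the definition of $H_\lambda$ together with Fubini--Tonelli); equating these yields the announced formula for $\beta > 2/(\alpha-1)$. Finiteness for all $\beta > 0$ will then follow by a simple monotonicity argument reducing to the case of large $\beta$.

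The first ingredient is a scaling symmetry of the ODE~\eqref{eq:ODEvlamda}. Plugging the ansatz $v_\lambda(x) = \lambda^{a} g(\lambda^{b} x)$ into the ODE and matching exponents one finds $a = 1/\alpha$ and $b = (\alpha-1)/(2\alpha)$, with $g$ forced to solve the ODE for $v_1$ with the same boundary behaviour; uniqueness of the non-negative solution of~\eqref{eq:ODEvlamda} then gives
\begin{equation*}
v_\lambda(x) = \lambda^{1/\alpha}\, v_1\bigl(\lambda^{(\alpha-1)/(2\alpha)} x\bigr).
\end{equation*}
Combining this with Lemma~\ref{lem:almost Laplace transform range} and the change of variables $y = \lambda^{(\alpha-1)/(2\alpha)} x$ produces
\begin{equation*}
\int_0^\infty x^{\beta-1} H_\lambda(x)\, dx = C_\beta\, \lambda^{-(\beta(\alpha-1)-2)/(2\alpha)},
\end{equation*}
the manipulation being valid precisely for $\beta > 2/(\alpha-1)$.

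For the second computation, one plugs the definition of $H_\lambda$ into the same integral and exchanges the $x$- and $s$-integrals by Fubini--Tonelli (legitimate as the integrand is non-negative). After substituting $y = x s^{-(\alpha-1)/(2\alpha)}$ in the inner $x$-integral, the moment $\Noo(R^\beta)/\beta = \int_0^\infty y^{\beta-1} \Noo(R \geq y)\, dy$ factorises out, and the remaining $s$-integral is a standard Gamma integral, yielding
\begin{equation*}
\int_0^\infty x^{\beta-1} H_\lambda(x)\, dx = \frac{\Noo(R^\beta)\,\Gamma\!\left(\tfrac{\beta(\alpha-1)-2}{2\alpha}\right)}{\beta\,\alpha\,\Gamma(1-1/\alpha)}\, \lambda^{-(\beta(\alpha-1)-2)/(2\alpha)}.
\end{equation*}
Equating with the previous expression and cancelling the common power of $\lambda$ gives the claimed identity.

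Finally, for the finiteness of $\Noo(R^\beta)$ for all $\beta > 0$, one uses the bound $R^\beta \leq 1 + R^{\beta'}$ with any fixed $\beta' > 2/(\alpha-1)$ to reduce to the case already handled; for this case one needs $C_{\beta'} < \infty$, which follows from the singular asymptotic $v_1(y) - 1 \sim c\, y^{-2/(\alpha-1)}$ near $0$ (provided by Corollary~\ref{cor:v(x) expansion}) combined with exponential decay of $v_1(y) - 1$ at infinity, itself obtained by linearising~\eqref{eq:ODEvlamda} around its limit $1$. The main technical point is simply to keep track of the scaling exponents correctly through the Fubini step; beyond that, the argument is essentially mechanical.
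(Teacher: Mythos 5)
Your argument is correct and follows essentially the same route as the paper: both take the Mellin transform $\int_0^\infty x^{\beta-1}H_\lambda(x)\,dx$ of the identity in Lemma~\ref{lem:almost Laplace transform range}, evaluate the $H_\lambda$ side by Fubini and the substitution $y=xs^{-\frac{\alpha-1}{2\alpha}}$ to extract $\Noo(R^\beta)/\beta$ times a Gamma integral, and control the $v_\lambda$ side using the $y^{-2/(\alpha-1)}$ singularity at $0$ from Corollary~\ref{cor:v(x) expansion} together with exponential decay at infinity. The only cosmetic differences are that you carry a general $\lambda$ and cancel it via the scaling relation $v_\lambda(x)=\lambda^{1/\alpha}v_1(\lambda^{\frac{\alpha-1}{2\alpha}}x)$ where the paper simply sets $\lambda=1$, and that you obtain the exponential decay of $v_1-1$ by an ODE comparison (linearisation) where the paper uses the explicit representation $v_\lambda=F_\lambda^{-1}$ and the logarithmic asymptotics of $F_\lambda$ near $\lambda^{1/\alpha}$; both are valid.
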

\begin{proof}
We emulate the proof of \cite[Corollary 3]{delmas2003computation}. Multiplying $H_{\lambda}(x)$ by $x^{\beta-1}$ and integrating both sides of Lemma \ref{lem:almost Laplace transform range} over $x$ (here one can make the substitution $y=xs^{-\frac{\alpha-1}{2\alpha}}$ on the left hand side) gives, for any $\lambda>0$,

\begin{align}\label{eqn:int lambda moments}
\frac{\Noo (R^{\beta})}{\beta\alpha\Gamma\left(1-\frac{1}{\alpha}\right)} \int_0^{\infty} r^{\frac{\beta (\alpha-1)}{2\alpha}-\frac{\alpha+1}{\alpha}} e^{-\lambda r} dr  = \int_0^{\infty} y^{\beta-1}(v_{\lambda}(y)-\lambda^{1/\alpha}) dy,
\end{align}
so it is sufficient to show that this latter integral is finite (the claimed upper bound then follows by taking $\lambda=1$). For this we use the representation 
\[
v_{\lambda}(x) = F_{\lambda}^{-1}(x),
\]
where $F_{\lambda}=F$ in the notation of Corollary \ref{cor:v(x) expansion}. We claim that for any $\lambda>0$, there exists $C \in (0, \infty)$ such that
\begin{align}\label{eqn:F asymp}
F_{\lambda}(\lambda^{1/\alpha}+\epsilon) \sim c \log \epsilon^{-1}
\end{align}
as $\epsilon \downarrow 0$. In particular this implies that $
F_{\lambda}^{-1}(c \log \epsilon^{-1})  \sim (\lambda^{1/\alpha}+\epsilon) $, or, changing parametrisation, 
\begin{align} \label{eq:Hexp}
H_\lambda(K) = v_{\lambda}(K) - \lambda^{1/\alpha} = F_{\lambda}^{-1}(K) - \lambda^{1/\alpha} \sim e^{-K/c}
\end{align}
as $K \uparrow \infty$. Hence $v_\lambda(y) - \lambda^{1/\alpha}$ decays exponentially in $y$ as $y \to \infty$. In addition, when $y \to 0^+$, Corollary~\ref{cor:v(x) expansion} gives that $v_\lambda(y)$ is of order $y^{-\frac{2}{\alpha -1}}$, and the integral in \eqref{eqn:int lambda moments} is thus finite for $\beta >\frac{2}{\alpha -1}$. This yields the formula of the corollary and also the finiteness of the moments for all $\beta >0$. 

To establish \eqref{eqn:F asymp}, we use the formula for $F_{\lambda}$ to deduce that, as $\epsilon \downarrow 0$ (with $\lambda>0$ fixed):
\begin{align}\label{eqn:integral for moments}
\begin{split}
F_{\lambda}(\lambda^{1/\alpha}+\epsilon) &\sim \int_{\lambda^{1/\alpha}+\epsilon}^1 \frac{du}{2 \sqrt{ \frac{u^{\alpha + 1}}{\alpha +1} - \lambda u + \frac{\alpha}{\alpha +1} \lambda^{\frac{\alpha +1}{\alpha}}}} \\
&= \lambda^{-\frac{\alpha -1}{2\alpha}} \int_{\epsilon \lambda^{-1/\alpha} }^1 \frac{dv}{2 \sqrt{ \frac{(v+1)^{\alpha + 1}}{\alpha +1} - (v+1) + \frac{\alpha}{\alpha +1}}} \\
&\sim \lambda^{-\frac{\alpha -1}{2\alpha}} \int_{\epsilon \lambda^{-1/\alpha} }^1 \frac{dv}{\sqrt{2 \alpha} v} \sim \frac{1}{\sqrt{2\alpha}\lambda^{\frac{\alpha -1}{2\alpha}}}\log (\epsilon^{-1}).\qedhere
\end{split}
\end{align}
\end{proof}

We can in addition compute the following, which includes the second statement of Theorem~\ref{thm:range snake intro}.

\begin{corollary}\label{cor:moment R}
For any $\lambda>0$,
\[
\N^{(1)}\left[ R \right]  = \frac{ C(3-\alpha)\Gamma(1 - \frac{1}{\alpha})}{2 \Gamma (\frac{3\alpha-3}{2\alpha})}  \quad \text{and} \quad \N^{(1)}[R^{\frac{2}{\alpha-1}}] = \Gamma \left(1 - \frac{1}{\alpha} \right) \left( \frac{\alpha + 1}{(\alpha - 1)^2} \right)^{\frac{1}{\alpha -1}},
\]
where $C = \int_0^{\infty} (H_{0}(x)-H_{1}(x)) dx$.
\end{corollary}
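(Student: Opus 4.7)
The plan is to treat the two formulas separately, both essentially by matching the two expressions for $H_\lambda(x)$ (one coming from its definition, one from Lemma~\ref{lem:almost Laplace transform range}) at well-chosen values of $\lambda$.

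For the second formula, I would take $\lambda = 0$. On one hand, Lemma~\ref{lem:almost Laplace transform range} combined with \eqref{eq:rangesnake} gives the closed form $H_0(x) = v_0(x) = \left(\frac{\alpha+1}{(\alpha-1)^2}\right)^{1/(\alpha-1)} x^{-2/(\alpha-1)}$. On the other hand, starting from the definition of $H_0$ and making the change of variable $t = x s^{-\frac{\alpha-1}{2\alpha}}$ converts the integral into one against $dt$ weighted by $\N^{(1)}[R\geq t]$; applying Fubini in the form $\int_0^\infty \N^{(1)}[R\geq t] t^\gamma \, dt = \frac{1}{\gamma+1}\N^{(1)}[R^{\gamma+1}]$ with $\gamma = \frac{3-\alpha}{\alpha-1}$ (so $\gamma+1 = \frac{2}{\alpha-1}$) produces $H_0(x) = \frac{\N^{(1)}[R^{2/(\alpha-1)}]}{\Gamma(1-1/\alpha)}\, x^{-2/(\alpha-1)}$. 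Equating the two expressions yields the second formula. Finiteness of the relevant moment was already handled by the previous corollary.

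For the first formula, the same trick cannot be applied directly since $v_\lambda$ has no closed form when $\lambda > 0$; instead I would work with $C = \int_0^\infty (H_0(x) - H_1(x))\, dx$. A preliminary observation is that while $H_0$ and $H_1$ are each non-integrable near $0$ (they blow up like $x^{-2/(\alpha-1)}$), their difference is bounded near $0$ by Corollary~\ref{cor:v(x) expansion} and decays exponentially at $+\infty$ by \eqref{eq:Hexp}, so $C$ is finite. Performing the same substitution $t = x s^{-\frac{\alpha-1}{2\alpha}}$ in the integrals defining $H_0$ and $H_1$ gives a representation
\[
H_0(x)-H_1(x) = \frac{2}{(\alpha-1)\Gamma(1-1/\alpha)}\, x^{-\frac{2}{\alpha-1}} \int_0^\infty \bigl(1-e^{-(x/t)^{2\alpha/(\alpha-1)}}\bigr) \N^{(1)}[R\geq t]\, t^{\frac{3-\alpha}{\alpha-1}} dt,
\]
whose integrand is nonnegative, so Tonelli applies when integrating over $x$. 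The inner integral in $x$ can then be evaluated (via $u = (x/t)^{2\alpha/(\alpha-1)}$) using the standard identity $\int_0^\infty (1-e^{-u}) u^{-\beta-1} du = \frac{\Gamma(1-\beta)}{\beta}$ with $\beta = \frac{3-\alpha}{2\alpha} \in (0,1)$ for $\alpha\in(1,2)$. The remaining $t$-integral collapses to $\int_0^\infty \N^{(1)}[R\geq t]\, dt = \N^{(1)}[R]$, and collecting all the constants produces the claimed identity.

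The only delicate point is checking that the Fubini interchange is justified and that all the boundary integrals converge in the expected ranges of parameters (in particular that the exponent $\beta = \frac{3-\alpha}{2\alpha}$ really lands in $(0,1)$ for every admissible $\alpha$, so that the Gamma-integral identity applies). After the sign-cancelling substitution the integrand is nonnegative, so Tonelli is harmless; the only genuinely nontrivial prerequisite is the pointwise control of $H_0 - H_1$ near $0$, which is already supplied by Corollary~\ref{cor:v(x) expansion}.
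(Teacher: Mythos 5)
Your proposal is correct and follows essentially the same route as the paper: the $\frac{2}{\alpha-1}$ moment is obtained by matching the scaling form of $H_0$ (after the substitution $t = x s^{-\frac{\alpha-1}{2\alpha}}$ and an integration by parts/Fubini step) against the closed formula $v_0(x) = \bigl(\tfrac{\alpha+1}{(\alpha-1)^2}x^{-2}\bigr)^{\frac{1}{\alpha-1}}$, and the first moment by integrating $H_0 - H_1$ over $x$, applying Tonelli to the nonnegative integrand, and evaluating the resulting Gamma integral with exponent $\beta = \tfrac{3-\alpha}{2\alpha} \in (0,1)$ — the only difference from the paper being the order in which the changes of variable are performed. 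One minor slip worth fixing: $H_0 - H_1$ does not decay exponentially at $+\infty$ (only $H_1$ approaches $\lambda^{1/\alpha}$ exponentially fast by \eqref{eq:Hexp}, while $H_0(x) = v_0(x)$ decays only like $x^{-\frac{2}{\alpha-1}}$), but since $\tfrac{2}{\alpha-1} > 2$ this polynomial decay is still integrable and the finiteness of $C$ is unaffected.
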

\begin{proof}
Here again, we follow the proof given by Delmas in \cite[Corollary 2]{delmas2003computation}.

\textbf{First moment.}
Take any $\lambda >0$. From our expression for $v_0$ and Corollary~\ref{cor:v(x) expansion}, the function $H_{0}(x)-H_{\lambda}(x)$ is bounded for $x>0$ and the bound~\eqref{eq:Hexp} yields that it is of order $1/x^2$ when $x\to \infty$. Therefore, the integral
\[
\int_0^{\infty} (H_{0}(x)-H_{\lambda}(x)) dx
\]
is finite. By Fubini's theorem, setting $y=xs^{-\frac{\alpha-1}{2\alpha}} $ then $r=\lambda s$ gives
\begin{align*}
\int_0^{\infty} (H_{0}(x)-H_{\lambda}(x)) dx &= \int_0^{\infty} \int_0^{\infty}(1-e^{-\lambda s}) \frac{\N^{(1)}\left[ R \geq xs^{-\frac{\alpha-1}{2\alpha}}  \right]}{\alpha \Gamma(1 - \frac{1}{\alpha}) s^{\frac{1}{\alpha}+1}} ds dx \\
&= \int_0^{\infty} \int_0^{\infty} s^{\frac{\alpha-1}{2\alpha}}(1-e^{-\lambda s}) \frac{\N^{(1)}\left[ R \geq y \right]}{\alpha \Gamma(1 - \frac{1}{\alpha})  s^{\frac{1}{\alpha}+1}} ds dy \\
&= \frac{1}{\alpha \Gamma(1 - \frac{1}{\alpha})}\lambda^{\frac{3-\alpha}{2\alpha}} \N^{(1)}\left[ R \right] \int_0^{\infty} r^{\frac{-\alpha-3}{2\alpha}}(1-e^{-r})  dr \\
&= \frac{2}{(3-\alpha)\Gamma(1 - \frac{1}{\alpha})}\lambda^{\frac{3-\alpha}{2\alpha}} \N^{(1)}\left[ R \right] \Gamma \left(\frac{3\alpha-3}{2\alpha}\right).
\end{align*}

\textbf{The $\frac{2}{\alpha-1}$ moment.} By \cref{lem:almost Laplace transform range}, $H_0(x) = v_0(x)$ and so
\begin{align*}
\int_0^{\infty}\frac{\N^{(1)}\left[ R \geq s^{-\frac{\alpha-1}{2\alpha}}  \right]}{\alpha \Gamma(1 - \frac{1}{\alpha}) s^{\frac{1}{\alpha}+1}} ds = \left( \frac{\alpha + 1}{(\alpha - 1)^2} \right)^{\frac{1}{\alpha -1}}.
\end{align*} Substituting $u= s^{-\frac{\alpha-1}{2\alpha}} $ gives
\begin{align*}
\int_0^{\infty}\frac{2\alpha}{\alpha-1}u^{-\frac{3\alpha-1}{\alpha-1}}\frac{\N^{(1)}\left[ R \geq u  \right]}{\alpha \Gamma(1 - \frac{1}{\alpha}) u^{-\frac{2(1+\alpha)}{\alpha-1}}} du = 
\int_0^{\infty} \frac{2}{\alpha-1}u^{\frac{3-\alpha}{\alpha-1}}\frac{\N^{(1)}\left[ R \geq u  \right]}{ \Gamma(1 - \frac{1}{\alpha}) } du = \left( \frac{\alpha + 1}{(\alpha - 1)^2} \right)^{\frac{1}{\alpha -1}}.
\end{align*}
Integrating by parts then gives $\N^{(1)}[R^{\frac{2}{\alpha-1}}] = {\Gamma(1 - \frac{1}{\alpha})} \left( \frac{\alpha + 1}{(\alpha - 1)^2} \right)^{\frac{1}{\alpha -1}}$.
\end{proof}

We now study some two-sided estimates. Recall
\[
v_{\lambda, a,b}(0) = \mathbb N_0 \left[ 1 - \indic{\mathcal R \subset (a,b) } \exp(-\lambda \sigma)\right] = \mathbb N_0 \left[ 1 - \indic{R<b, L>a } \exp(-\lambda \sigma)\right],
\]
and set
\begin{align*}
I_{\lambda}(a,b) :&= \int_0^{\infty}e^{-\lambda s} \frac{\N^{(1)}\left[ R \geq bs^{-\frac{\alpha-1}{2\alpha}} \text{ or } L \leq as^{-\frac{\alpha-1}{2\alpha}}  \right]}{\alpha \Gamma(1 - \frac{1}{\alpha}) s^{\frac{1}{\alpha}+1}} ds, \\
J_{\lambda}(a,b) :&= \int_0^{\infty}e^{-\lambda s} \frac{\N^{(1)}\left[ R \geq bs^{-\frac{\alpha-1}{2\alpha}} \text{ and } L \leq as^{-\frac{\alpha-1}{2\alpha}}  \right]}{\alpha \Gamma(1 - \frac{1}{\alpha}) s^{\frac{1}{\alpha}+1}} ds.
\end{align*}
Note that, by symmetry, we have for any $\lambda \geq 0$ that
\begin{align}\label{eqn:HIJ relation}
I_{\lambda}(-b,b) = 2H_{\lambda}(b) - J_{\lambda}(-b,b).
\end{align}
 
Then exactly the same proof as in \cref{lem:almost Laplace transform range} gives the following.

\begin{lemma}\label{lem:sort of Laplace double}
For $a<0< b$:
\begin{align*}
I_{\lambda}(a,b) = v_{\lambda,a ,b}(0) - \lambda^{1/\alpha}.
\end{align*}
\end{lemma}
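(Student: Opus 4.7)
The plan is to copy the argument of \cref{lem:almost Laplace transform range} essentially verbatim, adapted to the two-sided event $\{\mathcal R \subset (a,b)\}$ in place of $\{0\notin\mathcal R\}$. First I would rewrite
\[
v_{\lambda,a,b}(0) = \mathbb N_0\!\left[1 - \indic{\mathcal R\subset(a,b)}\,e^{-\lambda\sigma}\right] = \mathbb N_0\!\left[1 - \indic{L>a,\,R<b}\,e^{-\lambda\sigma}\right],
\]
since by definition $\mathcal R = [L,R]$ (see \eqref{eq:rangeDef}).

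Next I would disintegrate with respect to $\sigma$ via \eqref{eqn:Ito measure integrate s}, and then apply the scaling relation \eqref{eqn:Ito scaling}: rescaling to unit lifetime multiplies the range by $s^{-(\alpha-1)/(2\alpha)}$, so for each $s>0$,
\[
\mathbb N^{(s)}\!\left[L>a,\,R<b\right] = \mathbb N^{(1)}\!\left[L > a s^{-\frac{\alpha-1}{2\alpha}},\; R < b s^{-\frac{\alpha-1}{2\alpha}}\right].
\]
This yields
\[
v_{\lambda,a,b}(0) = \int_0^{\infty} \frac{\mathbb N^{(1)}\!\left[1 - \indic{L > a s^{-\frac{\alpha-1}{2\alpha}},\,R < b s^{-\frac{\alpha-1}{2\alpha}}}\,e^{-\lambda s}\right]}{\alpha\,\Gamma(1-\tfrac{1}{\alpha})\,s^{\frac{1}{\alpha}+1}}\,ds.
\]

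Finally I would use the elementary decomposition
\[
1 - \indic{A}\,e^{-\lambda s} = \left(1 - e^{-\lambda s}\right) + e^{-\lambda s}\,(1 - \indic{A})
\]
with $A = \{L > a s^{-(\alpha-1)/(2\alpha)},\,R < b s^{-(\alpha-1)/(2\alpha)}\}$, so that $1-\indic{A}$ is the indicator of the event appearing in $I_\lambda(a,b)$. The first piece integrates to $\lambda^{1/\alpha}$ by \eqref{eqn:Laplace sigma} (exactly as in the proof of \cref{lem:almost Laplace transform range}), and the second piece is, by definition, $I_\lambda(a,b)$. The case $\lambda = 0$ follows by monotone convergence. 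There is no real obstacle here: the only thing to verify is that the scaling of both endpoints $a$ and $b$ happens with the same exponent $s^{-(\alpha-1)/(2\alpha)}$, which is immediate from \eqref{eqn:Ito scaling} since the snake values themselves scale by $\lambda^{(1-1/\alpha)/2}$.
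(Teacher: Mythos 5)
Your proposal is correct and is exactly what the paper intends: the paper's "proof" is literally the one-line remark that the argument of \cref{lem:almost Laplace transform range} applies verbatim, and you have carried out precisely that adaptation (disintegration via \eqref{eqn:Ito measure integrate s}, scaling of both endpoints by $s^{-\frac{\alpha-1}{2\alpha}}$ from \eqref{eqn:Ito scaling}, the split $1-\indic{A}e^{-\lambda s}=(1-e^{-\lambda s})+e^{-\lambda s}\indic{A^c}$, and \eqref{eqn:Laplace sigma} for the first piece). No gaps.
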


As a result, we deduce the following.

\begin{corollary}
\begin{align*}
\Noo_0 (\min(R, -L)^{\frac{2}{\alpha-1}}) = \Gamma \left(1 - \frac{1}{\alpha}\right)\left[ 2\left( \frac{\alpha + 1}{(\alpha - 1)^2} \,  \right)^{\frac{1}{\alpha -1}} - \alpha_0^{\frac{2}{\alpha-1}}\right] ,
\end{align*}
where $\alpha_0 = \frac{\sqrt{\alpha+1}}{2}\int_{1}^{\infty} \frac{du}{\sqrt{u^{\alpha+1} -1}} $.
\end{corollary}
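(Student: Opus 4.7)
The strategy is to mimic the proof of \cref{cor:moment R}, but using the two-sided object $J_0(-b,b)$ together with the identity \eqref{eqn:HIJ relation} at $\lambda=0$. The key observation is that $\min(R,-L) \ge u$ is the same event as $\{R\ge u,\, L\le -u\}$, so $J_0(-b,b)$ encodes exactly the tail of $\min(R,-L)$ under $\N^{(1)}$. By the same change of variables $u = b s^{-\frac{\alpha-1}{2\alpha}}$ and integration by parts that were used in the $\frac{2}{\alpha-1}$-moment computation of \cref{cor:moment R}, one obtains
\begin{equation*}
J_0(-b,b) \;=\; \frac{1}{\Gamma(1-\tfrac{1}{\alpha})} \, b^{-\frac{2}{\alpha-1}} \, \N^{(1)}\bigl[\min(R,-L)^{\frac{2}{\alpha-1}}\bigr].
\end{equation*}
Thus the problem reduces to computing $J_0(-b,b)$, for which \eqref{eqn:HIJ relation} gives $J_0(-b,b) = 2H_0(b) - I_0(-b,b)$, where $H_0(b) = v_0(b) = \bigl(\frac{\alpha+1}{(\alpha-1)^2}\bigr)^{1/(\alpha-1)} b^{-2/(\alpha-1)}$ by \eqref{eq:rangesnake} and $I_0(-b,b) = v_{0,-b,b}(0)$ by \cref{lem:sort of Laplace double}.

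It then suffices to solve the ODE \eqref{eqn:ODE ab local mu infinite} with $\lambda=0$ on $(-b,b)$ and with infinite boundary conditions at both endpoints. By symmetry the solution is even and minimised at $0$; writing $y_0 := v_{0,-b,b}(0)$, the classical trick of multiplying $v'' = 2v^\alpha$ by $v'$ and integrating once gives the first integral
\begin{equation*}
(v')^2 \;=\; \frac{4}{\alpha+1}\bigl(v^{\alpha+1} - y_0^{\alpha+1}\bigr).
\end{equation*}
Separating variables and integrating from $0$ to $b$ yields
\begin{equation*}
b \;=\; \frac{\sqrt{\alpha+1}}{2} \int_{y_0}^{\infty} \frac{du}{\sqrt{u^{\alpha+1} - y_0^{\alpha+1}}},
\end{equation*}
and the substitution $u = y_0 w$ collapses this to $b = \alpha_0 \, y_0^{-(\alpha-1)/2}$, giving $v_{0,-b,b}(0) = \alpha_0^{2/(\alpha-1)} b^{-2/(\alpha-1)}$.

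Combining these pieces yields
\begin{equation*}
J_0(-b,b) \;=\; \left[ 2\left( \frac{\alpha+1}{(\alpha-1)^2} \right)^{\frac{1}{\alpha-1}} - \alpha_0^{\frac{2}{\alpha-1}} \right] b^{-\frac{2}{\alpha-1}},
\end{equation*}
and substituting this into the tail-identity for $\N^{(1)}[\min(R,-L)^{2/(\alpha-1)}]$ gives the announced formula. The only step requiring a little care is the reduction of the ODE to an explicit algebraic expression for $v_{0,-b,b}(0)$, since one must check that the even solution obtained by the first-integral argument is indeed the unique non-negative solution with the prescribed boundary behaviour; uniqueness follows exactly as in the derivation of \cref{theorem:main} from \cref{lemma:vlambdamu}, so this is not a genuine obstacle.
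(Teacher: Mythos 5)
Your proposal is correct and follows essentially the same route as the paper: both reduce the problem via \eqref{eqn:HIJ relation} and \cref{lem:sort of Laplace double} to evaluating $v_{0,-b,b}(0)$, obtain $v_{0,-b,b}(0)=(\alpha_0/b)^{2/(\alpha-1)}$ from the explicit integral representation of the symmetric solution of the ODE, and convert $J_0$ into the $\tfrac{2}{\alpha-1}$-moment by the same change of variables and integration by parts as in \cref{cor:moment R}. The only cosmetic differences are that you work directly on $(-b,b)$ and derive the first integral $(v')^2=\tfrac{4}{\alpha+1}(v^{\alpha+1}-y_0^{\alpha+1})$ explicitly, whereas the paper shifts to the interval $(0,2r)$ and verifies the inverse-function formula \eqref{eqn:G def} by substitution.
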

\begin{proof}
We follow the proof of \cite[Proposition 5]{delmas2003computation} which treats the Brownian case. For $x \in (0, 2r)$ we set $w_{\lambda,r}(x) = v_{\lambda, 0, 2r}(x)$. Note that, by symmetry, we have for all $a,b>0$ that 
\[
v_{\lambda,a,b}(0) = w_{\lambda,\frac{b-a}{2}}(\min(-a,b)) \quad \text{and} \quad  w_{\lambda,r}'(r)=0.
\]
Since $w_{0,r}$ is a solution of ODE~\eqref{eqn:ODE ab local mu infinite}, one can then verify by direct substitution that $w_{0,r}(t) = G^{-1}(t)$ where
\begin{align}\label{eqn:G def}
G(y) = \int_y^{\infty} \frac{du}{2\sqrt{\frac{u^{\alpha+1}}{\alpha+1} - \frac{w_{0,r}(r)^{\alpha+1}}{\alpha+1}}}.
\end{align}
To prove the corollary, we will use \eqref{eqn:G def} to evaluate $w_{0,r}(r)$, Lemma \ref{lem:sort of Laplace double} to evaluate $I_0(-r,r)$ and then \eqref{eqn:HIJ relation} to evaluate $J_0(-r,r)$. We will then proceed similarly to the proof of \cref{cor:moment R} to evaluate the $\frac{2}{\alpha-1}^{th}$ moment. To this end, fix $r>0$, let $\theta=w_{0,r}(r)$ and note that by \eqref{eqn:G def} we have for any $t>0$ that
\begin{align*}
\int_{w_{0,r}(t)}^{\infty} \frac{du}{2\sqrt{\frac{u^{\alpha+1}}{\alpha+1} - \frac{\theta^{\alpha+1}}{\alpha+1}}} = t.
\end{align*}
Substituting $v = \frac{u}{\theta}$ and then taking $t=r$ we deduce that 
\begin{align*}
\frac{\sqrt{\alpha+1}}{2}\int_{1}^{\infty} \frac{dv}{\sqrt{v^{\alpha+1} -1}} = r\theta^{\frac{\alpha-1}{2}},
\end{align*}
and hence it follows from Lemma \ref{lem:sort of Laplace double} (recall the definition of $\alpha_0$ in the statement of the corollary) that
\begin{align*}
I_0(-r,r) = w_{0,r}(r) = \theta = \left( \frac{\alpha_0}{r}\right)^{\frac{2}{\alpha-1}}.
\end{align*}
Hence, by \eqref{eqn:HIJ relation}, \cref{lem:almost Laplace transform range} and \eqref{eq:rangesnake} we have
\begin{align*}
J_0(-r,r) = 2H_0(r) - I_0(-r,r) 
=\left[ 2\left( \frac{\alpha + 1}{(\alpha - 1)^2} \,  \right)^{\frac{1}{\alpha -1}} - \alpha_0^{\frac{2}{\alpha-1}}\right] r^{-\frac{2}{\alpha-1}}.
\end{align*}
Now proceeding as in the proof of \cref{cor:moment R} we see that
\begin{align*}
J_{0}(-1,1) &= \int_0^{\infty} \frac{\N^{(1)}\left[ \min (R, -L) \geq s^{-\frac{\alpha-1}{2\alpha}} \right]}{\alpha \Gamma(1 - \frac{1}{\alpha}) s^{\frac{1}{\alpha}+1}} ds = \frac{2\alpha}{\alpha-1} \int_0^{\infty} t^{\frac{2}{\alpha-1}-1} \frac{\N^{(1)}\left[ \min (R, -L) \geq t \right]}{\alpha \Gamma(1 - \frac{1}{\alpha}) } dt \\
&= \frac{1}{\Gamma(1 - \frac{1}{\alpha})}  \N^{(1)}\left[ (\min (R, -L))^{\frac{2}{\alpha-1}} \right].
\end{align*}
Tying everything together gives the result.
\end{proof}

\section{Occupation measure}\label{sctn:occ measure}

Let $\Ta$ be an $\alpha$-stable tree with total mass $1$, that is a tree sampled according to $N^{(1)}$. Recall that the uniform measure $\lambda$ on $\Ta$ is the pushforward of Lebesgue measure on $[0,1]$ by the canonical projection $p : [0,1] \to \Ta$.
We recall from \eqref{eqn:ISE def} and \eqref{eqn:ISE def rerooted} that we defined the random measure $\mathcal I$ of the stable snake on $\mathbb R$ and its version shifted by the infimum of the snake by 
\begin{equation*}
\langle \mathcal I , f \rangle = \int_{\Ta} \lambda(da) \, f(Z_a) \qquad \text{and} \qquad \langle \overline{\mathcal I} , f \rangle = \int_{\Ta} \lambda(da) \, f(\overline Z_a).
\end{equation*}

The aim of this section is to calculate the first moment
\[
\mathbb N_0 \left( (1 - e^{-\lambda \sigma}) \, \overline{\mathcal I} \left( [0,\varepsilon] \right) \right),
\]
where $\overline{\mathcal I} \left( [0,\varepsilon] \right)= \langle \overline{\mathcal I} , \indic{[0,\varepsilon]} \rangle$, and to give a tight upper bound as $\varepsilon \downarrow 0$. For the Brownian case (in which case $\alpha =2$), this was done in \cite[Section 3]{legall-weill} and our techniques will be similar, albeit a bit more technical.

\bigskip

Since our calculations will involve Bessel processes, let us introduce some notation. For every $d >0$ and $x >0$, we will denote a $d$-dimensional Bessel process started at $x$ by $(R_t)_{t \geq 0}$. We also denote the law of this process by $P_x^{(d)}$. Absolute continuity relations between Bessel processes have been established by Yor~\cite{Yor}. The special case we will use in this work can be found in \cite[Exercise XI.1.22]{Revuz-Yor} and \cite[Proposition 2.6]{legall-weill}:

\begin{proposition} \label{prop:Bessel}
Let $t >0$ and $F$ be a nonnegative measurable function on $C([0,t],\mathbb R)$. Then, for every $x>0$ and $\tilde{\lambda} >0$, writing $\tilde{\nu} = \sqrt{\tilde{\lambda}^2 + \frac{1}{4}}$, one has
\[
\Pi_x \left[\indic{\xi[0,t] \subset (0,\infty) } \exp \left( - \frac{\tilde{\lambda}^2}{2} \int_0^t \frac{dr}{\xi_r^2}\right) \, F \left( (\xi_r)_{0\leq r \leq t}\right) \right]
= 
x^{\tilde{\nu} +\frac{1}{2}} \, E_x^{(2(\tilde{\nu}+1))} \left[ (R_t)^{-\tilde{\nu} - \frac{1}{2}} F \left( (R_r)_{0\leq r \leq t}\right)\right].
\]
\end{proposition}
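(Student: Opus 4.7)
This is a classical absolute continuity relation between Brownian motion (weighted by an exponential functional of $\int_0^t \xi_r^{-2}\,dr$) and a Bessel process. My plan is to prove it via Girsanov's theorem, by recognising the prefactor $(\xi_t/x)^{\tilde\nu+1/2}\exp(-\tfrac{\tilde\lambda^2}{2}\int_0^t \xi_r^{-2}\,dr)$ as the Radon--Nikodym density of a Bessel process of dimension $d=2(\tilde\nu+1)$ started at $x$, against the law of Brownian motion started at $x$ and killed on hitting $0$.

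The core computation is to rewrite this prefactor in Doléans--Dade form. Applying Itô's formula to $\log \xi_t$ on the event $\{t < T_0\}$ (with $T_0 = \inf\{r \geq 0 : \xi_r = 0\}$) yields the identity $\int_0^t \xi_r^{-1}\,d\xi_r = \log(\xi_t/x) + \tfrac{1}{2}\int_0^t \xi_r^{-2}\,dr$. Substituting this into $(\xi_t/x)^{\tilde\nu+1/2}$ and regrouping, the prefactor becomes
\begin{equation*}
\exp\left(\int_0^t \frac{2\tilde\nu+1}{2\xi_r}\,d\xi_r - \frac{1}{2}\int_0^t \frac{(2\tilde\nu+1)^2}{4\xi_r^2}\,dr\right),
\end{equation*}
where the matching of the two coefficients multiplying $\int_0^t \xi_r^{-2}\,dr$ is equivalent to the algebraic identity $\tilde\nu^2 = \tilde\lambda^2 + \tfrac{1}{4}$ --- the single ``miracle'' of the argument, and the reason $\tilde\nu$ is prescribed as in the statement.

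Next I would invoke Girsanov's theorem: under the tilted probability measure defined by this stochastic exponential, the canonical process solves $d\xi_t = dB_t + \tfrac{2\tilde\nu+1}{2\xi_t}\,dt$, which is the SDE of a Bessel process of dimension $d = 2(\tilde\nu+1)$. Since $\tilde\lambda > 0$ gives $\tilde\nu > \tfrac{1}{2}$, so $d > 3$, this diffusion almost surely never hits $0$; hence the local martingale is in fact a true martingale on any bounded time interval and the change of measure is legitimate on $\{t < T_0\}$. This yields
\begin{equation*}
E_x^{(2(\tilde\nu+1))}\bigl[G(R)\bigr] = \Pi_x\!\left[\indic{\xi[0,t] \subset (0,\infty)}\,(\xi_t/x)^{\tilde\nu+1/2}\exp\!\left(-\tfrac{\tilde\lambda^2}{2}\int_0^t \xi_r^{-2}\,dr\right) G(\xi)\right]
\end{equation*}
for every non-negative measurable $G$ on $C([0,t],\mathbb R)$. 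Specialising $G(\xi) = x^{\tilde\nu+1/2}\xi_t^{-\tilde\nu-1/2} F(\xi)$ cancels the factor $(\xi_t/x)^{\tilde\nu+1/2}$ on the right and produces $x^{\tilde\nu+1/2} R_t^{-\tilde\nu-1/2} F(R)$ on the left, giving exactly the identity in the proposition. The only non-routine step is the algebraic identification in the first paragraph; once that is in place, the Girsanov argument is standard and is made comfortable by the fact that $d > 2$ keeps the diffusion strictly away from the boundary.
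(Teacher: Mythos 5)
Your proof is correct. The paper does not actually prove this proposition --- it simply cites Yor's absolute continuity relations via \cite[Exercise XI.1.22]{Revuz-Yor} and \cite[Proposition 2.6]{legall-weill} --- and your Girsanov argument is exactly the standard derivation of that cited fact: the key identity $(\tilde{\nu}+\tfrac{1}{2})(\tilde{\nu}-\tfrac{1}{2})=\tilde{\nu}^2-\tfrac{1}{4}=\tilde{\lambda}^2$ checks out, the drift $\tfrac{2\tilde{\nu}+1}{2\xi_t}=\tfrac{d-1}{2\xi_t}$ correctly identifies the dimension $d=2(\tilde{\nu}+1)>3$, and the specialisation $G(\xi)=x^{\tilde{\nu}+1/2}\xi_t^{-\tilde{\nu}-1/2}F(\xi)$ yields the stated formula. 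The only point worth tightening is the justification that the nonnegative local martingale is a true martingale: the clean way is to localise at the hitting times $T_{1/n}$ and use that the Bessel process of dimension $d\geq 2$ never reaches $0$, which is what your remark amounts to.
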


We will also use a classic stochastic monotonicity result regarding Bessel processes: if $x<y$, then for any $t \geq 0$ the law of $R_t$ under $P_x^{(d)}$ is stochastically dominated by the law of $R_t$ under $P_y^{(d)}$.

\bigskip

We are now ready to state our formula for the first moment of the occupation measure $\overline{\mathcal I}$ in terms of Bessel processes:

\begin{proposition} \label{prop:momentI}
Fix $\lambda >0$ and recall that the function $v_\lambda$ is the unique solution to the ODE~\eqref{eq:ODEvlamda}.
For every $\varepsilon >0$, one has that $\mathbb N_0 \left( (1 - e^{-\lambda \sigma}) \, \overline{\mathcal I} \left( [0,\varepsilon] \right) \right)$ is equal to
\begin{equation}
\varepsilon^{\frac{2 \alpha}{\alpha -1}} \int_0^{\infty} da \,  E_\varepsilon^{(\frac{5 \alpha - 1}{\alpha -1})} \left[
R_a^{-\frac{2 \alpha}{\alpha -1}} \, \left( 1 - \exp \left\{ - \int_0^a dt \, \left( \psi' \left( v_\lambda(R_t) \right) - \frac{\alpha (\alpha +1)}{(\alpha-1)^2} \frac{1}{R_t^2}\right) \right\} \right)
\right].
\end{equation}
\end{proposition}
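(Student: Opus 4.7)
The plan is to adapt the Brownian argument in Section~3 of~\cite{legall-weill} to the stable setting, combining the spinal decomposition of Proposition~\ref{prop:firstmomentrange ds}, the Poisson master formula (Proposition~\ref{prop:Poisson master formula}), the explicit formula~\eqref{eq:rangesnake} for $v_0$, and finally the Bessel absolute continuity of Proposition~\ref{prop:Bessel}. First I would rewrite $\overline{\mathcal I}([0,\varepsilon]) = \int_0^\sigma \indic{\widehat W_t - \min \mathcal R \leq \varepsilon}\, dt$ and apply Proposition~\ref{prop:firstmomentrange ds} with $x=0$ and integrand $\phi = (1-e^{-\lambda\sigma})\indic{\widehat W_s - \min \mathcal R \leq \varepsilon}$. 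This reduces the problem to computing, for each spine length $h$, an expectation under $E^0 \otimes \Pi_0$ of a conditional Poisson expectation over subtrees $w_i$ sampled from $\mathbb N_{\xi_{a_i}}$, with $\widehat W_s$ replaced by the spine endpoint $\xi_h$ and $\min \mathcal R$ by $\min_i \inf \mathcal R(w_i)$.

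Next I would carry out the Poisson expectation. Writing $(1 - e^{-\lambda \sigma_{\text{tot}}})\prod_i \indic{\inf \mathcal R(w_i) \geq \xi_h - \varepsilon}$ as a difference of two products, and using translation invariance together with the identities
\[
\mathbb N_{\xi_a}\bigl[1 - e^{-\lambda\sigma}\indic{\inf \mathcal R \geq \xi_h - \varepsilon}\bigr] = v_\lambda(\xi_a - \xi_h + \varepsilon), \qquad \mathbb N_{\xi_a}\bigl[\indic{\inf \mathcal R < \xi_h - \varepsilon}\bigr] = v_0(\xi_a - \xi_h + \varepsilon)
\]
(with the convention $v_\bullet \equiv +\infty$ on $(-\infty, 0]$, which automatically kills the integrand whenever the spine dips below $\xi_h - \varepsilon$), Proposition~\ref{prop:Poisson master formula} yields after subtraction
\[
\exp\!\left(-\int_0^h v_0(\xi_a - \xi_h + \varepsilon)(J+\hat J)(da)\right) - \exp\!\left(-\int_0^h v_\lambda(\xi_a - \xi_h + \varepsilon)(J+\hat J)(da)\right).
\]
Integrating out $(J+\hat J)$ using the Laplace transform~\eqref{eqn:U Laplace transform cts} then replaces $(J+\hat J)(da)$ inside each exponent by $\psi'(v_\bullet(\cdot))\,da$.

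Third I would time-reverse the spine via $\tilde\xi_r = \xi_h - \xi_{h - r}$ (law-preserving under $\Pi_0$) and shift to $\eta_r = \varepsilon - \tilde\xi_r$, a Brownian motion under $\Pi_\varepsilon$ killed on hitting $0$. The remaining quantity becomes
\[
\int_0^\infty dh \, \Pi_\varepsilon\!\left[\indic{\eta_{[0,h]} > 0}\left(\exp\!\left(-\int_0^h \psi'(v_0(\eta_r))\,dr\right) - \exp\!\left(-\int_0^h \psi'(v_\lambda(\eta_r))\,dr\right)\right)\right].
\]
Using~\eqref{eq:rangesnake} one has $\psi'(v_0(y)) = \alpha v_0(y)^{\alpha - 1} = \frac{\alpha(\alpha+1)}{(\alpha-1)^2}\,y^{-2}$, so factoring out the first exponential brings the integrand into exactly the form required by Proposition~\ref{prop:Bessel} with $\tilde\lambda^2/2 = \frac{\alpha(\alpha+1)}{(\alpha-1)^2}$. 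The identity $\sqrt{\tilde\lambda^2 + 1/4} = \frac{3\alpha+1}{2(\alpha-1)}$ then yields exactly the Bessel dimension $2(\tilde\nu + 1) = \frac{5\alpha - 1}{\alpha - 1}$ and the prefactor $\varepsilon^{\tilde\nu + 1/2} = \varepsilon^{2\alpha/(\alpha-1)}$ appearing in the statement.

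The main obstacle will be bookkeeping around the infinite-measure nature of $\mathbb N_x$: one has to verify that Proposition~\ref{prop:Poisson master formula} applies (which requires $\int |f-1|\,d\hat\mu < \infty$ for each of the relevant exponents) and carefully reconcile the convention $v_\bullet = +\infty$ on $(-\infty, 0]$ with the actual event that every subtree on the spine has infimum above $\xi_h - \varepsilon$. Once these technicalities are settled, the remainder is routine computation, with the clean emergence of the Bessel dimension $\frac{5\alpha-1}{\alpha-1}$ out of $\psi(\lambda) = \lambda^\alpha$ providing a useful sanity check.
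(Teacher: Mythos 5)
Your proposal is correct and reaches the stated formula, and its skeleton --- spinal decomposition, Poisson master formula (\cref{prop:Poisson master formula}), Laplace transform of the subordinator $U^{(1)}+U^{(2)}$ via \eqref{eqn:U Laplace transform cts}, the explicit expression \eqref{eq:rangesnake} for $v_0$, and the Bessel absolute continuity of \cref{prop:Bessel} with $\tilde\lambda^2=\tfrac{2\alpha(\alpha+1)}{(\alpha-1)^2}$ --- is the same as the paper's. The one genuine difference is the very first step: the paper invokes uniform re-rooting invariance (\cref{prop:uniform rerooting}) to replace $\indic{\widehat W_s-\underline{W}\le\varepsilon}$ by $\indic{\underline{W}\ge-\varepsilon}$, so that after the spinal decomposition the barrier sits at the fixed level $-\varepsilon$ and only a spatial shift $\Pi_0\to\Pi_\varepsilon$ is needed; you instead feed the original functional directly into \cref{prop:firstmomentrange ds}, which leaves a barrier at the spine-dependent level $\xi_h-\varepsilon$, and you compensate with a time reversal of the spine. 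Both routes work, and your order of operations is the right one: you must integrate out $(J_h+\hat J_h)$ before reversing, so that the quantity being reversed is a functional of $(\xi_a)_{a\le h}$ alone and the reversal is just $(\xi_h-\xi_{h-r})_{0\le r\le h}\overset{(d)}{=}(\xi_r)_{0\le r\le h}$ under $\Pi_0$. The technicalities you flag are real but benign: the convention $v_\bullet=+\infty$ on $(-\infty,0]$ does kill the integrand whenever the spine touches the barrier, because the jump measure of $U^{(1)}+U^{(2)}$ is infinite and so $(J_h+\hat J_h)$ almost surely charges every subinterval of $[0,h]$; and the $\mathbb N$-null distinction between $\{\inf\mathcal R\ge y\}$ and $\{y\notin\mathcal R\}$ follows from continuity of $y\mapsto\mathbb N_x(y\in\mathcal R)$. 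What the paper's re-rooting buys is precisely that none of this barrier bookkeeping depends on $\xi_h$, at the cost of having to quote the re-rooting invariance; what your route buys is that \cref{prop:uniform rerooting} is not needed at all.
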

\begin{proof}
The proof is similar to that of \cite[Lemma 3.2]{legall-weill}. Denote by $\underline{W}$ the minimum of the snake $W$ and by $W^{[s]}$ the snake $W$ rerooted at $s$. By definition one has
\begin{align*}
\mathbb N_0 \left( (1 - e^{-\lambda \sigma}) \, \overline{\mathcal I} \left( [0,\varepsilon] \right) \right)
&=
\mathbb N_0 \left( (1 - e^{-\lambda \sigma}) \, \int_0^\sigma ds \, \indic{ \widehat W_s - \underline{W} \leq \varepsilon} \right) \\ 
& = \mathbb N_0 \left( (1 - e^{-\lambda \sigma}) \, \int_0^\sigma ds \, \indic{ \underline{W}^{[s]} \geq - \varepsilon} \right)  = \mathbb N_0 \left( (1 - e^{-\lambda \sigma}) \, \int_0^\sigma ds \, \indic{ \underline{W} \geq - \varepsilon} \right),
\end{align*}
where we used the rerooting invariance of Proposition~\ref{prop:uniform rerooting} for the last equality.

We will use Proposition~\ref{prop:firstmoment ds} to calculate this quantity. Recall that, for $h>0$, under $E^0\otimes\Pi_0$, conditionally on the measure $J_h+\hat J_h$ and the Brownian trajectory $(\xi_a)_{0 \leq a \leq h}$, we are given a Poisson point process $\sum_{i\in I} \delta_{a_i,\omega_i}$ with intensity $(J_h + \hat J_h)(da) \otimes \mathbb N_{\xi_a} (dw)$. We denote this by $\Pcal$ and its law and expectation by $\Pb_{\Pcal}$ and $\mathbb{E}_{\Pcal}$. Proposition~\ref{prop:firstmoment ds} and \cref{prop:Poisson master formula} then give
\begin{align*}
\mathbb N_0 &\left( (1 - e^{-\lambda \sigma}) \, \int_0^\sigma ds \, \indic{ \underline{W} \geq - \varepsilon} \right)\\
&=
\int_0^\infty dh \, E^0\otimes \Pi_0 \left[ \mathbb{E}_{\Pcal}\left[
\indic{\underline{\xi}_h>-\varepsilon} \,
\indic{\forall i , -\varepsilon -\xi_{a_i} \notin \mathcal R(w_i)}
\left( 1 - e^{- \lambda \sum_{i\in I} \sigma(w_i)}\right)
\middle| J_h+\hat J_h, (\xi_a)_{0 \leq a \leq h} \right]\right],\\
&=
\int_0^\infty dh \, E^0\otimes \Pi_0 \left[
\indic{\underline{\xi}_h>-\varepsilon} \,
\exp \left( - \int_{[0,h]\times \mathcal W} \left( 1 - \indic{ -\varepsilon -\xi_{a} \notin \mathcal R(w)}\right) 
(J_h + \hat J_h(da)) \otimes \mathbb N_{\xi_a}(dw)
\right) \right.\\
& \qquad \qquad 
- \left.
\indic{\underline{\xi}_h>-\varepsilon} \,
\exp \left( - \int_{[0,h]\times \mathcal W} \left( 1 - \indic{ -\varepsilon -\xi_{a} \notin \mathcal R(w)} e^{-\lambda \sigma(w)} \right) 
(J_h + \hat J_h(da)) \otimes \mathbb N_{\xi_a}(dw)
\right)
\right],\\
&=
\int_0^\infty dh \, E^0\otimes \Pi_\varepsilon \left[ 
\indic{\underline{\xi}_h>0} \,
\exp \left( - \int_{[0,h]} \mathbb N_{\xi_a} \left( 1 - \indic{ 0 \notin \mathcal R}\right) 
(J_h + \hat J_h(da))
\right) \right.\\
& \qquad \qquad \qquad \qquad \qquad
- \left.
\indic{\underline{\xi}_h>0} \,
\exp \left( - \int_{[0,h]} \mathbb N_{\xi_a} \left( 1 - \indic{ 0 \notin \mathcal R} e^{-\lambda \sigma} \right) 
(J_h + \hat J_h(da))
\right)
\right],\\
&=
\int_0^\infty dh \,\Pi_\varepsilon \left[ 
\indic{\underline{\xi}_h>0} \,
\exp \left( - \int_{[0,h]} da \, \psi' \left( v_0(\xi_a) \right)
\right) 
-
\indic{\underline{\xi}_h>0} \,
\exp \left( - \int_{[0,h]} da \, \psi' \left( v_{\lambda}(\xi_a) \right)
\right)
\right].
\end{align*}
Using the expression~\eqref{eq:rangesnake} for $v_0$ and replacing $\psi' (x)$ by $\alpha x^{\alpha-1}$ we get:
\begin{align*}
\mathbb N_0 &\left( (1 - e^{-\lambda \sigma}) \, \int_0^\sigma ds \, \indic{\underline{W} \geq - \varepsilon} \right)\\
&=
\int_0^\infty dh \,\Pi_\varepsilon \left[ 
\indic{\underline{\xi}_h>0} \,
\exp \left( - \frac{\alpha(\alpha +1)}{(\alpha -1)^2}\int_{[0,h]} \frac{da}{\xi_a^2} \right)\right. \\
& \qquad \qquad \qquad \qquad  \times
\left. \left(1
-
\exp \left( - \int_{[0,h]} da \left( \psi' \left( v_\lambda(\xi_a) \right)
- \frac{\alpha(\alpha +1)}{(\alpha -1)^2} \frac{1}{\xi_a^2}
\right)
\right)
\right)
\right].
\end{align*}
The proposition then follows from the absolute continuity relation between Bessel processes of Proposition~\ref{prop:Bessel} with $\tilde{\lambda}^2 = \frac{2 \alpha (\alpha +1)}{(\alpha -1)^2}$ and $\tilde{\nu} = \frac{3 \alpha +1}{2(\alpha -1)}$.
\end{proof}

We are now able to prove \cref{cor:I exp vol UB}.

\begin{proof}[Proof of \cref{cor:I exp vol UB}]
We start with the formula obtained in Proposition~\ref{prop:momentI}:
\begin{align*}
\varepsilon^{-\frac{2 \alpha}{\alpha -1}}  \mathbb N_0 & \left( (1 - e^{-\lambda \sigma}) \, \overline{\mathcal I} \left( [0,\varepsilon] \right) \right)\\
&=
\int_0^{\infty} da \,  E_\varepsilon^{(\frac{5 \alpha -1}{\alpha -1})} \left[
R_a^{-\frac{2 \alpha}{\alpha -1}} \, \left( 1 - \exp \left\{- \int_0^a dt \, \left( \psi' \left( v_\lambda(R_t) \right) - \frac{\alpha (\alpha +1)}{(\alpha-1)^2} \frac{1}{R_t^2}\right) \right\} \right)
\right].
\end{align*}
To bound the integral in the previous display, we start by splitting it two. Fix $\delta >0$ and set
\begin{align*}
I_1 &:= \int_0^{\delta} da \,  E_\varepsilon^{(\frac{5 \alpha-1}{\alpha -1})} \left[
R_a^{-\frac{2 \alpha}{\alpha -1}} \, \left( 1 - \exp \left\{ - \int_0^a dt \, \left( \psi' \left( v_\lambda(R_t) \right) - \frac{\alpha (\alpha +1)}{(\alpha-1)^2} \frac{1}{R_t^2}\right) \right\} \right)
\right], \\
I_2 &:= \int_\delta^{\infty} da \,  E_\varepsilon^{(\frac{5 \alpha-1}{\alpha -1})} \left[
R_a^{-\frac{2 \alpha}{\alpha -1}} \, \left( 1 - \exp \left\{ - \int_0^a dt \, \left( \psi' \left( v_\lambda(R_t) \right) - \frac{\alpha (\alpha +1)}{(\alpha-1)^2} \frac{1}{R_t^2}\right) \right\} \right)
\right].
\end{align*}
Let us first focus on $I_2$:
\begin{align*}
I_2 \leq \int_\delta^{\infty} da \,  E_\varepsilon^{(\frac{5 \alpha-1}{\alpha -1})} \left[
R_a^{-\frac{2 \alpha}{\alpha -1}} \right]
= \int_\delta^{\infty} \frac{da}{a^{\frac{\alpha}{\alpha -1}}} \,  E_{\varepsilon/\sqrt{a}}^{(\frac{5 \alpha-1}{\alpha -1})} \left[
R_1^{-\frac{2 \alpha}{\alpha -1}} \right]
\leq \int_\delta^{\infty} \frac{da}{a^{\frac{\alpha}{\alpha -1}}} \,  E_{0}^{(\frac{5 \alpha-1}{\alpha -1})} \left[
R_1^{-\frac{2 \alpha}{\alpha -1}} \right],
\end{align*}
using the scaling property and the stochastic monotonicity with respect to the starting point for Bessel processes (see the discussion directly underneath \cref{prop:Bessel}). The expectation in the integral is finite since, for every $d > 0$ and every $a > -d$ one has $E_{0}^{(d)} \left[R_1^{a} \right] < \infty$ by \cite[Equation (15)]{lawler2018notes}. Therefore, there exists a constant $c_\alpha >0$ depending only on $\alpha$ such that for every $\delta >0$,
\[
I_2 \leq c_\alpha \, \delta^{-\frac{1}{\alpha -1}}.
\]

We now turn to $I_1$. From Corollary~\ref{cor:v(x) expansion}, we know that there exists another constant $C_{\alpha,\lambda} >0$, depending only on $\alpha$ and $\lambda$, such that, for every $x >0$,
\[
0 \leq \psi' \left( v_\lambda(x) \right) - \frac{\alpha (\alpha +1)}{(\alpha-1)^2} \frac{1}{x^2} \leq  C_{\alpha,\lambda} \, x^{\frac{2}{\alpha -1}}.
\]
This gives
\[
I_1 \leq C_{\alpha,\lambda} \,\int_0^{\delta} da \, \int_0^a dt \, E_\varepsilon^{(\frac{5 \alpha-1}{\alpha -1})} \left[
R_a^{-\frac{2 \alpha}{\alpha -1}} \,  R_t^{\frac{2}{\alpha -1}}
\right]
=
C_{\alpha,\lambda} \,\int_0^{\delta} da \, \int_0^a dt \, E_\varepsilon^{(\frac{5 \alpha-1}{\alpha -1})} \left[
R_t^{\frac{2}{\alpha -1}} E_{R_t}^{(\frac{5 \alpha-1}{\alpha -1})} \left[
R_{a-t}^{-\frac{2 \alpha}{\alpha -1}} \right]
\right]
,
\]
where we used the Markov property of Bessel processes for the last identity. To conclude, we adapt arguments found at the end of the proof of Lemma 3.2 in \cite{legall-weill}. First, using scaling and stochastic monotonicity of Bessel processes we have that there exists $C<\infty$ (as above, $C$ is finite since for every $d > 0$ and every $a > -d$ one has $E_{0}^{(d)} \left[R_1^{a} \right] < \infty$) such that for any $0 < \varepsilon , \delta < 1$ and $0 < t < a < \delta$
\begin{align*}
E_\varepsilon^{(\frac{5 \alpha-1}{\alpha -1})} \left[
R_t^{\frac{2}{\alpha -1}} E_{R_t}^{(\frac{5 \alpha-1}{\alpha -1})} \left[
R_{a-t}^{-\frac{2 \alpha}{\alpha -1}} \right] \right]
& \leq 
E_\varepsilon^{(\frac{5 \alpha-1}{\alpha -1})} \left[
R_t^{\frac{2}{\alpha -1}} E_0^{(\frac{5 \alpha-1}{\alpha -1})} \left[
R_{a-t}^{-\frac{2 \alpha}{\alpha -1}} \right] \right] \\
& = E_\varepsilon^{(\frac{5 \alpha-1}{\alpha -1})} \left[
R_t^{\frac{2}{\alpha -1}}  \right] \frac{C}{(a-t)^{\frac{\alpha}{\alpha -1}}} \\
& \leq E_1^{(\frac{5 \alpha-1}{\alpha -1})} \left[
R_1^{\frac{2}{\alpha -1}}  \right] \frac{C}{(a-t)^{\frac{\alpha}{\alpha -1}}} \\
& = \frac{C'}{(a-t)^{\frac{\alpha}{\alpha -1}}},
\end{align*}
with constants $C,C' >0$ depending only on $\alpha$. Unfortunately, this bound is only good for $t$ bounded away from $a$ and we need a better bound for $t$ close to $a$. Applying Proposition~\ref{prop:Bessel} with $F=1$ gives, for every $x,r >0$ 
\[
E_{x}^{(\frac{5 \alpha-1}{\alpha -1})} \left[
R_{r}^{-\frac{2 \alpha}{\alpha -1}} \right] \leq x^{-\frac{2 \alpha}{\alpha -1}}.
\]
We get, for every $\varepsilon >0$ and $0 < t < a$
\begin{align*}
E_\varepsilon^{(\frac{5 \alpha-1}{\alpha -1})} \left[
R_t^{\frac{2}{\alpha -1}} E_{R_t}^{(\frac{5 \alpha-1}{\alpha -1})} \left[
R_{a-t}^{-\frac{2 \alpha}{\alpha -1}} \right] \right]
 \leq 
E_\varepsilon^{(\frac{5 \alpha-1}{\alpha -1})} \left[
R_t^{\frac{2}{\alpha -1}} \,
R_{t}^{-\frac{2 \alpha}{\alpha -1}}\right] 
= E_\varepsilon^{(\frac{5 \alpha-1}{\alpha -1})} \left[
R_t^{-2}  \right] = \frac{C''}{t},
\end{align*}
where the constant $C''$ depends only on $\alpha$. This finally gives the following upper bound for $I_1$, uniform over $0<\varepsilon<1$:
\begin{align*}
I_1 & \leq C_{\alpha,\lambda} \, \int_0^{\delta} da \, \int_0^a dt \,
\min \left( \frac{C'}{(a-t)^{\frac{\alpha}{\alpha -1}}} , \frac{C''}{t} \right) \\
& \leq C_{\alpha,\lambda} \, \int_0^{\delta} da \,
\left( \int_0^{a^{\frac{\alpha}{\alpha -1}}} dt \, \frac{C'}{(a-t)^{\frac{\alpha}{\alpha -1}}} + \int_{a^{\frac{\alpha}{\alpha -1}}}^a dt \, \frac{C''}{t} \right) \\
& \leq C_{\alpha,\lambda} \, \int_0^{\delta} da \,
\left( C'_1 + C_1'' \ln a \right).
\end{align*}
This finishes the proof since the integral in the last display is finite.
\end{proof}

\section{Common increase points}\label{sctn:inc points}

This section is dedicated to the proof of Theorem~\ref{thm:no common increase points intro}. We follow the same general proof strategy as Le Gall in \cite[Lemma 2.2]{LeGall2007BrownianMapTopological} although there are some technical complications due to the fact that the height process is no longer Markovian in our case. We start with a basic lemma. 

\begin{lemma}\label{lem:height holder}
\begin{enumerate}[(a)]
\item $\No$ almost surely, for any $\gamma< 1-\frac{1}{\alpha}$, the height function is H\"older continuous with parameter $\gamma$.
\item (cf \cite[Lemma 5.1]{LeGall2007BrownianMapTopological}). Take any $b \in (0, 1/2)$. Then $\Noo$ a.e. there exists $\epsilon_0> 0$ such that for every $s,t \geq 0$ with $d(s,t) \leq \epsilon_0$,
\[
|\Wt_t-\Wt_s| \leq d(s,t)^b.
\]
\end{enumerate}
\end{lemma}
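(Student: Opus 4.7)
The plan is to prove the two parts separately. Part (a) is a classical result of Duquesne and Le Gall: \cite[Theorem 1.4.4]{LeGDuqMono} shows that the height function of an $\alpha$-stable spectrally positive L\'evy process admits a continuous modification that is locally H\"older continuous with any exponent $\gamma < 1 - 1/\alpha$, and this transfers to a single excursion under $\No$ via point (iii) of \cref{sctn:Levy and Ito}.

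For part (b), the plan is to exploit the conditional Gaussian structure of the snake: from the construction recalled in \cref{section:stablesnake}, conditionally on $H$ the process $(\Wt_t)_{t \in [0,1]}$ is a centred Gaussian process with $\Wt_t - \Wt_s \sim \mathcal{N}(0, d(s,t))$, where $d(s,t) = H_s + H_t - 2 m_{s,t}$. Fix $b \in (0, 1/2)$ and $\gamma \in (0, 1 - 1/\alpha)$, and set $s_k^n := k 2^{-n}$. By part (a), $\Noo$-almost surely there exist random constants $K, N_0 < \infty$ such that $d(s_k^n, s_{k+1}^n) \leq K\, 2^{-n\gamma}$ for all $n \geq N_0$ and every $k$. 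Combined with the standard Gaussian tail bound, and using that $2b - 1 < 0$, we obtain for $n \geq N_0$ and conditionally on $H$,
\[
\mathbb{P}\!\left( \bigl| \Wt_{s_{k+1}^n} - \Wt_{s_k^n} \bigr| > d(s_k^n, s_{k+1}^n)^b \,\Big|\, H \right) \leq 2 \exp\!\left( - d(s_k^n, s_{k+1}^n)^{2b-1}/2 \right) \leq 2 \exp\!\left( - c\, 2^{n\gamma(1-2b)} \right)
\]
for some random $c > 0$. Working on the event $\{K \leq K_0,\, N_0 \leq N_0'\}$ for deterministic $K_0, N_0'$ and then letting $K_0, N_0' \to \infty$, the right-hand side summed over the $2^n$ pairs at level $n$ and then over $n$ is finite; the first Borel--Cantelli lemma then yields that $\Noo$-almost surely, the bound $|\Wt_{s_{k+1}^n} - \Wt_{s_k^n}| \leq d(s_k^n, s_{k+1}^n)^b$ holds for all $k$ once $n$ exceeds some random $N_1$.

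It remains to lift this dyadic control to a uniform bound over all pairs $s, t$ with $d(s,t)$ small, via a chaining argument \`a la \cite[Lemma 2.2]{LeGall2007BrownianMapTopological}. For each $s \in [0,1]$ we fix a dyadic approximation $(s_n)_n$ with $s_n \in D_n := \{k\, 2^{-n} : 0 \leq k \leq 2^n\}$ and $|s_n - s|, |s_n - s_{n+1}| \leq 2^{-n}$, and telescope $\Wt_s - \Wt_{s_N} = \sum_{n \geq N}(\Wt_{s_{n+1}} - \Wt_{s_n})$; the dyadic bound combined with part (a) estimates the sum by a geometric series of order $2^{-N\gamma b}$. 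Choosing $N$ so that $2^{-N\gamma} \asymp d(s,t)$ and controlling $|\Wt_{s_N} - \Wt_{t_N}|$ via the dyadic bound on finitely many consecutive pairs at level $N$ then yields $|\Wt_t - \Wt_s| \leq C\, d(s,t)^b$, and the constant is absorbed by lowering $b$ slightly (any $b < 1/2$ is admissible). The main obstacle is precisely this chaining step, because two times $s, t$ can be close in $d$ while being far apart in $[0,1]$; this is handled by inserting the time $u \in [s \wedge t, s \vee t]$ realising $m_{s,t}$ and splitting $\Wt_s - \Wt_t = (\Wt_s - \Wt_u) - (\Wt_u - \Wt_t)$, so that each sub-pair now corresponds to a monotone path in the tree and the telescoping estimate applies.
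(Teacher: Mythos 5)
Part (a) is essentially fine and matches the paper's route: both rest on \cite[Theorem 1.4.4]{LeGDuqMono}. Your transfer to $\No$ "via point (iii)" is incomplete as stated --- the disintegration only gives the property under $\Ns$ for Lebesgue-almost every $s$ --- but it can be repaired by noting that the H\"older-continuity event is invariant under the scaling maps $\Phi_\lambda$, so that $\Ns(A)$ is constant in $s$; this is an acceptable alternative to the paper's Vervaat-transform/absolute-continuity argument.

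Part (b) contains a genuine gap, and the fix you propose does not close it. Your dyadic grid $\{k2^{-n}\}$ is a net for the \emph{Euclidean} metric on $[0,1]$, not for the pseudometric $d$. After Borel--Cantelli you control $|\Wt_{s^n_{k+1}}-\Wt_{s^n_k}|$ only for \emph{consecutive} dyadic times, and the telescoping sum handles $|\Wt_s-\Wt_{s_N}|$ and $|\Wt_t-\Wt_{t_N}|$; but the remaining bridge $|\Wt_{s_N}-\Wt_{t_N}|$ involves two level-$N$ dyadic times that may be separated by order $2^N$ consecutive dyadic intervals even though $d(s_N,t_N)$ is tiny, and summing your increment bound over these intermediate pairs gives something of order $2^N\,(K2^{-N\gamma})^b$, which diverges. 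Inserting the time $u$ realising $m_{s,t}$ does not resolve this: $s$ and $u$ can still be macroscopically far apart in $[0,1]$ while $d(s,u)=H_s-H_u$ is small, so the same obstruction reappears for the pair $(s,u)$. What monotonicity along the branch buys you is that, conditionally on the tree, $\Wt$ restricted to the branch through $p(s)$ is a Brownian motion indexed by height; but you then need a H\"older constant that is uniform over the uncountably many branches, and that uniformity is precisely the content of the lemma --- it cannot be extracted from a time-dyadic skeleton. The chaining has to be performed with respect to the pseudometric $\sqrt{d(s,t)}$ itself, using the polynomial covering numbers of the stable tree; this is what the paper does by invoking \cite[Theorem 2.2.4]{vaart2023empirical} together with \cite[Proposition 5.2]{DLG05}, and is also how the Brownian analogue is proved in \cite[Lemma 5.1]{LeGall2007BrownianMapTopological}.
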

\begin{proof}
\begin{enumerate}[(a)]
\item This is proved in \cite[Theorem 1.4.4]{LeGDuqMono} under $N$. This extends to $\No$ using standard arguments involving the Vervaat transform and absolute continuity relations between the bridges and excursions. Note that the roles of $\gamma$ and $\alpha$ are reversed in the statement of \cite[Theorem 1.4.4]{LeGDuqMono}.
\item This is clearly true along a single branch and extends to the whole tree using standard arguments: in particular one can apply \cite[Theorem 2.2.4]{vaart2023empirical} using the $L^p$ norm as the $\psi$-Orlicz norm there and comparing to the pseudodistance $\sqrt{\dt (s,t)}$. (See \cite[Proposition 5.2]{DLG05} for the results on packing and covering numbers of stable trees required for that argument.)
\end{enumerate}
\end{proof}

We now prove the key lemma, analogous to Lemma 5.2 in~\cite{LeGall2007BrownianMapTopological}. Recall that $\zeta_s$ denotes the lifetime of the snake at time $s$, which is the same as the height of the corresponding tree vertex. Fix $\epsilon, \delta, t >0$ and define (see Figure~\ref{fig:L,T and T'})
\begin{align*}
T_t = \inf \{s \geq t : \zeta_s= \zeta_t + \delta \}, \ \ T_t' = \inf \{s \geq t : \zeta_s= (\zeta_t - \epsilon ) \vee 0 \}.
\end{align*}
On the event $\{T_t< T_t'\}$ set
\[
L_t = \sup \{ s < T_t: \zeta_s = \zeta_t\}.
\]

\begin{figure}[h]
\centering
\includegraphics[width=8cm]{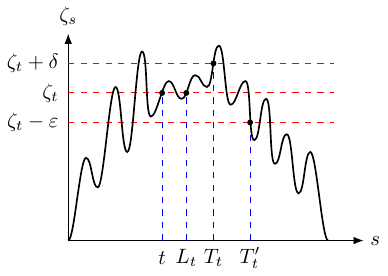}
\caption{The times $T_t$, $T'_t$ and $L_t$.}\label{fig:L,T and T'}
\end{figure}

\begin{lemma}
Under $\N_x$ choose $U$ uniformly on $[0, \sigma]$, take $\delta \in (0,1)$ and set $T=T_U, T'=T_U'$ and $L=L_U$. Then there exists $C_{\delta} < \infty$ depending only on $\alpha$ and $\delta$ such that for all $\eta \in (0,1)$ and all $x \in \R$:
\[
\N_x\left(T<T' \text{ and } \Wt_s > \Wt_L - \eta \text{ for all } s \in [L,T]\right) \leq C_{\delta} \, \epsilon \, \eta^{\frac{\alpha+1}{\alpha-1}}.
\]
\end{lemma}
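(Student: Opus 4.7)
The strategy is to follow the proof of \cite[Lemma 5.2]{LeGall2007BrownianMapTopological}, adapted to stable trees. I would first apply the strong Markov property of $(\rho, W)$ at the random time $U$, combined with the many-to-one formula of \cref{prop:firstmoment ds}, to reduce the problem to bounding
\begin{equation*}
\int_0^\infty dh \, E^0 \otimes \Pi_x \big[ \Pb^*_{J_h, \, \xi|_{[0,h]}}(A) \big],
\end{equation*}
where $A = \{T < T', \Wt_s > \Wt_L - \eta \ \forall s \in [L, T]\}$ is the event evaluated for the snake under $\Pb^*_{\mu, w}$ with $\zeta_0 = h$ playing the role of $\zeta_U$.

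Under $\Pb^*_{\mu, w}$ with $\zeta_0 = h$, I would next invoke the excursion theory of the stable height process above level $h$. The excursions of $\zeta$ above $h$ form a Poisson point process indexed by the local time $L^h$ of $\zeta$ at this level, with intensity the \Ito excursion measure $\N$, and by the snake Markov property each excursion carries a sub-snake of law $\N_y$, where $y$ is the snake value at the start of the excursion. Since the event on $[L, T]$ is implied by the full-excursion event "the sub-tree has height $\geq \delta$ and its sub-snake stays within $\eta$ of the starting value", a Poisson-thinning union bound over such excursions occurring before the first descent to $h - \epsilon$, combined with translation invariance of the sub-snake law, yields
\begin{equation*}
\Pb^*_{\mu, w}(A) \leq \mathbb E^*_{\mu, w}[L^h_{T'}] \cdot \N_\eta\big[\height(\T) \geq \delta,\ \min_{a \in \T} Z_a > 0\big].
\end{equation*}

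The core technical input is then the sharp snake estimate
\begin{equation*}
\N_\eta\big[\height(\T) \geq \delta,\ \min_{a \in \T} Z_a > 0\big] \leq C_\delta \, \eta^{(\alpha+1)/(\alpha-1)}
\end{equation*}
for small $\eta$, with $C_\delta$ depending only on $\alpha$ and $\delta$; this exponent matches the Brownian value $3$ when $\alpha = 2$. Establishing this estimate is the main obstacle: a crude bound using only $\N_0[\height(\T) \geq \delta] = O(\delta^{-1/(\alpha-1)})$ is independent of $\eta$, and recovering the sharp $\eta$-dependence requires exploiting that a snake started near $0$ is highly unlikely to stay positive on a tall tree. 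I would derive it by analysing the ODE of \cref{theorem:main} (or an appropriate joint tree-and-snake variant) together with the near-zero asymptotics of \cref{cor:v(x) expansion}, possibly supplemented by the Bessel-process representations used in \cref{prop:Bessel} and \cref{sctn:occ measure}.

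Finally, I would show
\begin{equation*}
\int_0^\infty dh \, E^0 \otimes \Pi_x \big[\mathbb E^*_{J_h, \, \xi|_{[0,h]}}[L^h_{T'}]\big] \leq C_\alpha \, \epsilon,
\end{equation*}
using standard estimates for the local time of the stable height process (its expected level-set local time under $\N$ is of order $1$ by criticality of the associated CSBP with mechanism $\psi(\lambda)=\lambda^\alpha$) together with a scale-function-type argument quantifying the first descent by $\epsilon$. Combining the three estimates yields the desired bound $C_\delta \, \epsilon \, \eta^{(\alpha+1)/(\alpha-1)}$.
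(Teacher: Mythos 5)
Your overall architecture --- a spine decomposition at the uniform time $U$, a factor $\epsilon$ coming from the subtrees grafted within distance $\epsilon$ of $p(U)$, and a factor $\eta^{\frac{\alpha+1}{\alpha-1}}$ coming from the snake-positivity constraint --- is the same as the paper's, but two of your three steps contain genuine errors. First, the union bound over excursions above level $h$ is invalid as stated. The event of the lemma constrains $\Wt$ only on $[L,T]$, i.e.\ on the spine from $p(L)$ to the \emph{first} point of the excursion at relative height $\delta$ together with the subtrees grafted to its left; it says nothing about the excursion after time $T$. Your per-excursion event ``$\height(\T)\geq\delta$ and $\min_{a\in\T}Z_a>0$'' is therefore \emph{strictly smaller} than the event actually forced by $A$ (and your phrase ``the event on $[L,T]$ is implied by the full-excursion event'' is the containment in the useless direction), so $A$ is not contained in the union of your per-excursion events and the inequality $\Pb^*(A)\leq \mathbb{E}^*[L^h_{T'}]\cdot\N_\eta[\cdots]$ does not follow. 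The correct per-excursion quantity is the measure of $\{\height\geq\delta,\ \Wt_s>-\eta \text{ for all } s\leq\tau_\delta\}$ with $\tau_\delta$ the first time the height reaches $\delta$, restricted to the left of the spine; controlling this is precisely Step 2 of the paper's proof and requires the Williams-type decomposition of \cite{abraham2009williams} together with Theorem 4.6.2 of \cite{LeGDuqMono} (giving the law of ${\hat{\rho}}^{[L,T]}$ and the height-constrained intensity $N(\cdot, H(\T)\leq\delta-r)$) before the Bessel absolute-continuity argument of \cref{prop:Bessel} can be applied. You correctly flag this estimate as the main obstacle but leave it unproven, and the route you sketch (the ODE of \cref{theorem:main} and \cref{cor:v(x) expansion}) only supplies $\N_\eta(0\in\mathcal R)$, which enters the paper's computation as a single ingredient via \eqref{eq:rangesnake}.

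Second, the ``$\leq C_\alpha\,\epsilon$'' step fails as written, for reasons specific to $\alpha<2$. The height process is not Markov here, so the Brownian picture of excursions above level $h$ indexed by a local time with $\mathbb{E}[L^h_{T'}]\asymp\epsilon$ does not transfer; the subtrees branching off the ancestral line of $p(U)$ within distance $\epsilon$ are governed by the exploration measure $\rho_U$ restricted to $[h-\epsilon,h]$, i.e.\ by the subordinator $U^{(1)}$ with Laplace exponent $\lambda^{\alpha-1}$, whose increments have \emph{infinite} mean. Any first-moment (union-bound) computation of the expected number or mass of excursion opportunities therefore diverges. The paper instead bounds the \emph{probability} that this Poisson process contains a tree of height at least $\delta$, namely $E^0[1-\exp\{-N(H>\delta)\,U^{(1)}(\epsilon)\}]=1-e^{-c_\delta'\epsilon}\leq c\,\epsilon$, which never requires a first moment of $U^{(1)}$. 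Relatedly, by discarding the normalisation $\frac{1}{\sigma}$ coming from $U$ being uniform on $[0,\sigma]$, your integral $\int_0^\infty dh\,E^0\otimes\Pi_x[\cdots]$ diverges, since the integrand is of order $\epsilon$ uniformly in $h$; handling $\frac{1}{\sigma}$ (via $\sigma\geq\max_{i\in\I\setminus\I_\epsilon}\sigma_i$ and separate treatments of $h<1$ and $h>1$) is the entire content of the paper's Step 1 and cannot be skipped.
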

\begin{proof}
The proof proceeds in two steps, one for the event $\{T<T'\}$ and the other for the event $\{\Wt_s > \Wt_L - \eta \text{ for all } s \in [L,T]\}$.

\textbf{Step 1.} We start by bounding (recall that $x$ here denotes the initial condition for the snake)
$\N_x\left(T<T'\right)$.
We henceforth assume that $\epsilon \leq \delta$ (the result for Step 1 then extends to all $\epsilon>0$ by adjusting the value of the constant $C_{\delta}$). For each $t \in [0, \sigma]$ let $P_t$ denote the direct path in $\Ta$ from the root to $p(t)$. The event $\{  T_t<T_t'\}$ is contained in the event in which at least one of the subtrees grafted to the right of $P_t$ within distance $\epsilon$ of $p(t)$ has height exceeding $\delta$. We apply \cref{prop:firstmomentrange ds} and \cref{lemma:onesidePoisson}, let $\I$ denote the indices of Poisson points for the Poisson point process $\mathcal P$ appearing in \cref{lemma:onesidePoisson}, and let $\I_{\epsilon} \subset \I$ denote those indices with height $h \in [\zeta_t, \zeta_t - \epsilon]$. We let $\Pb_{\Pcal}$ and $\mathbb{E}_{\Pcal}$ denote the law and expectation with respect to $\mathcal P$. We can then write
\begin{align*}
\N_x\left( T<T'\right) &= \N_x\left( \frac{1}{\sigma}\int_0^{\sigma} \indic{T_t<T_t'} dt\right) \\
 &\leq \int_0^{\infty} E^0\left[ \estart{
 \frac{1}{\sum_{i \in \mathcal{I} } \sigma_i} 
 \indic{ \max_{i \in \mathcal{I_{\epsilon}}} H (\T_i) > \delta} \middle | J_h}{\mathcal P} \right] dh.
 \end{align*}

We will establish separate upper bounds for the previous integral for $h < 1$ and $h > 1$. Let us start with the latter. Since $\mathcal I$ is a Poisson point process, the two parts $\I_{\epsilon}$ and $\I \setminus \I_{\epsilon}$ are independent and we can factorise the integral:
\begin{align*}
\int_1^{\infty} & E^0\left[ \estart{
 \frac{1}{\sum_{i \in \mathcal{I} } \sigma_i} 
 \indic{ \max_{i \in \mathcal{I_{\epsilon}}} H (\T_i) > \delta} \middle | J_h}{\mathcal P} \right] dh\\
 &\qquad \leq 
 \int_1^{\infty} E^0\left[ \estart{
 \frac{1}{\max_{i \in \mathcal{I} \setminus \mathcal{I_{\epsilon}}}  \sigma_i} 
 \indic{ \max_{i \in \mathcal{I_{\epsilon}}} H (\T_i) > \delta} \middle | J_h}{\mathcal P} \right] dh\\
 &\qquad =
 \int_1^{\infty} E^0\left[ \estart{
 \frac{1}{\max_{i \in \mathcal{I} \setminus \mathcal{I_{\epsilon}}}  \sigma_i} \middle | J_h}{\mathcal P}
\mathbb P_{\mathcal P} \left[\max_{i \in \mathcal{I_{\epsilon}}} H (\T_i) > \delta \middle | J_h \right] \right] dh\\
& \qquad =
 \int_1^{\infty} E^0\left[ \estart{
 \frac{1}{\max_{i \in \mathcal{I} \setminus \mathcal{I_{\epsilon}}}  \sigma_i} \middle | J_h}{\mathcal P} \right]
 E^0\left[ 
\mathbb P_{\mathcal P} \left[\max_{i \in \mathcal{I_{\epsilon}}} H (\T_i) > \delta \middle | J_h \right] \right] dh,
\end{align*}
where the last equality comes from the fact that, under $E^0$, the two conditional quantities in the previous display depend only on $U^{(1)} (t)$ respectively for $t < \varepsilon$ and $\varepsilon <t<h$.

We can compute separately the two conditional quantities in the integral. For $x>0$ and $h>1 >\varepsilon$ one has
\begin{align*}
\mathbb P_{\mathcal P} \left[\max_{i \in \mathcal I \setminus \mathcal{I_{\epsilon}}} \sigma_i \leq x \middle | J_h \right]
&=
\exp - \int_\varepsilon ^h N \left( \sigma > x \right) \, U^{(1)}(dt) = \exp \{- c_\alpha x^{-1/\alpha} U^{(1)} (h- \varepsilon)\}.
\end{align*}
This then gives
\begin{align*}
E^0\left[ \estart{
 \frac{1}{\max_{i \in \mathcal{I} \setminus \mathcal{I_{\epsilon}}}  \sigma_i} \middle | J_h}{\mathcal P} \right]
& = E^0\left[
\int_0^\infty \frac{dx}{x} \, c_\alpha \, x^{-\frac{1}{\alpha} - 1} U^{(1)}(h - \varepsilon) \, \exp \{- c_\alpha x^{-1/\alpha} U^{(1)} (h- \varepsilon)\}
\right]\\
& = E^0\left[
\left(  c_\alpha \,  U^{(1)}(h - \varepsilon) \right)^{-\alpha} \int_0^\infty dy \, y^{-2-\frac{1}{\alpha}} e^{- y^{-1/\alpha}}
\right]\\
& = c'_\alpha h^{-\frac{\alpha}{\alpha -1}}.
\end{align*}

For the second quantity we have
\begin{align*}
 E^0\left[ 
\mathbb P_{\mathcal P} \left[\max_{i \in \mathcal{I_{\epsilon}}} H (\T_i) > \delta \middle | J_h \right] \right]
& =
E^0\left[ 1-\exp \left\{ -\int_0^{\epsilon} N (H>\delta) U^{(1)} (dt) \right\} \right]\\
& = E^0\left[ 1-\exp - c_\delta U^{(1)} (\varepsilon)\right]\\
& = 1 - e^{- \varepsilon \alpha c_\delta^{\alpha -1}}\\
& \leq c'_\delta \, \varepsilon,
\end{align*}
where $c_\delta$ depends only on $\delta$ and $\alpha$. Putting these last two bounds together we get
\begin{align}
\int_1^{\infty}  E^0\left[ \estart{
 \frac{1}{\sum_{i \in \mathcal{I} } \sigma_i} 
 \indic{ \max_{i \in \mathcal{I_{\epsilon}}} H (\T_i) > \delta} \middle | J_h}{\mathcal P} \right] dh
 \leq c_1 \, \varepsilon \int_{1}^\infty dh \, h^{-\frac{\alpha}{\alpha -1}} = c \varepsilon, \label{eq:bound1}
\end{align}
with $c>0$ depending only on $\alpha$ and $\delta$.

\bigskip

We now turn to the integral over $h$ between $0$ and $1$. Heuristically, in the integral, we are considering the event where one of the subtrees of $\mathcal I_{\varepsilon}$ has height larger than $\delta$. This forces the lifetime of the corresponding excursion to be large, and the integral will be small. More precisely we have:
\begin{align*}
\int_0^{1} & E^0\left[ \estart{
 \frac{1}{\sum_{i \in \mathcal{I} } \sigma_i} 
 \indic{\max_{i \in \mathcal{I_{\epsilon}}} H (\T_i) > \delta} \middle | J_h}{\mathcal P} \right] dh\\
 &\qquad \leq 
 \int_0^{1} E^0\left[ \estart{
 \frac{1}{\max_{i \in \mathcal{I_{\epsilon}}}  \sigma_i} 
 \indic{ \max_{i \in \mathcal{I_{\epsilon}}} H (\T_i) > \delta} \middle | J_h}{\mathcal P} \right] dh\\
&\qquad = \int_0^{1} E^0\left[ 
\int_0^\infty dx \,
\mathbb P_{\mathcal P} \left(
 \frac{1}{\max_{i \in \mathcal{I}_{\epsilon} } \sigma_i} 
 \indic{ \max_{i \in \mathcal{I_{\epsilon}}} H (\T_i) > \delta}
 > x
 \middle | J_h \right) \right] dh\\
 &\qquad =
 \int_0^{1} E^0\left[ 
\int_0^\infty dx \, \left( 
\mathbb P_{\mathcal P} \left(
\max_{i \in \mathcal{I}_{\epsilon}}  \sigma_i < \frac{1}{x}
 \middle | J_h \right) 
-
\mathbb P_{\mathcal P} \left(
\max_{i \in \mathcal{I}_{\epsilon} } \sigma_i < \frac{1}{x}
\, , \, \max_{i \in \mathcal{I}_{\epsilon} } H(\T_i) < \delta
 \middle | J_h \right) 
\right)
 \right] dh\\
&\qquad =
\int_0^1 \int_0^\infty E^0 \left[ e^{-N(\sigma > 1/x) U^{(1)}(h \wedge \varepsilon)} -e^{-N(\sigma > 1/x \, \text{ or } \, H > \delta) U^{(1)}(h \wedge \varepsilon)} \right] \, dx \, dh\\
&\qquad =
\int_0^1 \int_0^\infty \left( e^{-N(\sigma > 1/x)^{\alpha -1} \cdot (h \wedge \varepsilon)} -e^{-N(\sigma > 1/x \, \text{ or }  \, H > \delta)^{\alpha -1} \cdot (h \wedge \varepsilon)} \right) \, dx \, dh.
\end{align*}

As $\varepsilon \to 0$, we have 
\begin{align*}
e^{-N(\sigma > 1/x)^{\alpha -1} \cdot (h \wedge \varepsilon)} -e^{-N(\sigma > 1/x \, \text{ or } \, H > \delta)^{\alpha -1} \cdot (h \wedge \varepsilon)}
& \sim (h \wedge \varepsilon) \cdot \left( N(\sigma > 1/x \, \text{ or } \, H > \delta)^{\alpha -1} - N(\sigma > 1/x)^{\alpha -1} \right) \\
& \leq \varepsilon  N(\sigma < 1/x \, , \, H > \delta)^{\alpha -1}.
\end{align*}
As a consequence, for $\varepsilon$ small enough, we have
\begin{align}
\int_0^{1} & E^0\left[ \estart{
 \frac{1}{\sum_{i \in \mathcal{I} } \sigma_i} 
 \indic{ \max_{i \in \mathcal{I_{\epsilon}}} H (\T_i) > \delta } \middle | J_h}{\mathcal P} \right] dh
\leq 
2 \, \varepsilon \int_0^\infty  N(\sigma < 1/x \, , \, H > \delta)^{\alpha -1} \, dx. \label{eq:bound2}
\end{align}
The integral in the last display is finite since $ N(\sigma < 1/x \, , \, H > \delta)$ has a tail of the form (see for example \cite[Lemma 4.1]{duquesne2005hausdorff})
\begin{align*}
N(\sigma < 1/x \, , \, H > \delta) \leq C_{\alpha,\delta} \, e^{ - C_{\alpha,\delta} x^{1 - \frac{1}{\alpha}}}
\end{align*}
as $x \to \infty$.

Putting our bounds \eqref{eq:bound1} and \eqref{eq:bound2} together, we finally have
\[
\N_x\left(T<T'\right) \leq c \, \varepsilon,
\]
where the constant $c$ depends only on $\delta$ and $\alpha$.

\textbf{Step 2.} The next step is to bound the probability of the event $\{ \Wt_s > \Wt_L - \eta \text{ for all } s \in [L,T] \}$, conditionally on the event $\{ T<T' \}$, under the law $\Pb^*$. For this we apply \cite[Theorem 4.6.2]{LeGDuqMono} and \cite[Theorem 3.1]{abraham2009williams}.

Recall the dual exploration process $({\hat{\rho}}_t)_{t \geq 0}$ introduced in \eqref{eqn:dual exp process} which informally encodes the sizes of hubs on the left hand side of the tree branches. We define the shifted version ${\hat{\rho}}^{[L,T]}$ as a measure on $[0, \delta]$ as follows. Recall also that $(\zeta_s)_{s \geq 0}$ denotes the height process of $\Ta$. For $A \subset [0, \delta]$ define $A^L = \zeta_L + A = \{x \in (0, \infty): x - \zeta_L \in A\}$. Then set
\[
{\hat{\rho}}^{[L,T]} (A) = {\hat{\rho}}_T (A^L).
\]
It follows by construction that in fact ${\hat{\rho}}^{[L,T]}$ is the projection of ${\hat{\rho}}_T$ to the part of the branch $P_T$ from the root of the tree to $p(T)$ coded by the time interval $[L,T]$, indexed by the length of this branch (which is $\delta$).

\begin{figure}[h]
\includegraphics[width=7cm]{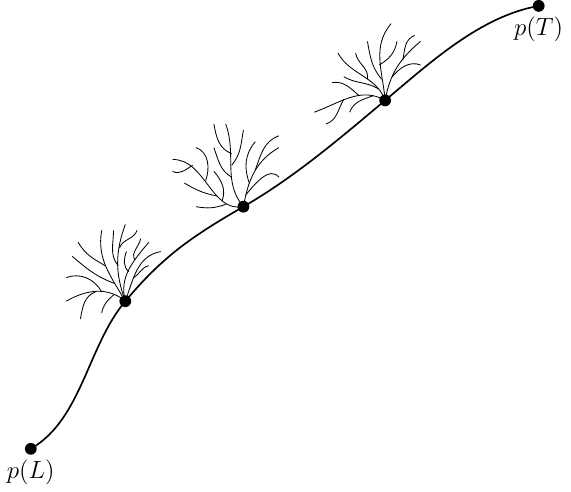}
\centering
\caption{The sizes of the hubs on the left of this branch are encoded by ${\hat{\rho}}^{[L,T]}$, and the subtrees emanating from them are coded by the \Ito measure, with an extra restriction on their height.}\label{fig:subtrees penalised}
\end{figure}

We let $P_{[L,T]}$ denote the branch between $p(L)$ and $p(T)$ and we continue to index it by the distance from $p(L)$ (recall that its total length is $\delta$). The interval $[L,T]$ codes precisely this branch and the subtrees grafted to the left of it. In fact this collection of subtrees form a Poisson point process, the law of which can be deduced from \cite[Theorem 4.6.2]{LeGDuqMono} and \cite[Theorem 3.1]{abraham2009williams}; we explain this in the following few paragraphs. To this end, we define the measure
\begin{equation}\label{eqn:PP for subtrees}
\Gamma = \sum_{i \in \I} \delta_{(x_i, \T_i)},
\end{equation}
where $\T_i$ denotes the $i^{th}$ tree in the collection and $x_i$ denotes the location of its root on the branch $P_{[L,T]}$ (indexed by distance to $p(L)$).

Consider the shifted time-reversed height process $\overleftarrow{H}_t = \zeta_{T-t}-\zeta_L$ and associated \Levy process $\overleftarrow{X}_t = X_{(T-t)^-}$ for $t \in [0, T-L]$. Note that, by construction, $\HH_0=\delta$ and that $\HH_t < \delta$ for all $t>0$. Moreover, if $(\rho_t)_{t \in [0,T-L]}$ denotes the exploration process associated with $\overleftarrow{X}$, then necessarily $\rho_0 = {\hat{\rho}}^{[L,T]}$.

It therefore follows from \cite[Theorem 3.1]{abraham2009williams} that conditionally on ${\hat{\rho}}^{[L,T]}$, the point measure in \eqref{eqn:PP for subtrees} is Poisson with intensity 
\begin{equation}\label{eqn:PP Williams}
{\hat{\rho}}^{[L,T]}(dr) N (\cdot, H(\T) \leq \delta - r). 
\end{equation}
In addition, it follows from \cite[Theorem 4.6.2]{LeGDuqMono} that ${\hat{\rho}}^{[L,T]}$ has law corresponding to that of
\begin{equation}\label{eqn:eta PP}
\sum_{j \in \mathcal{J}} (v_j - \ell_j) \delta_{r_j},
\end{equation}
where $\sum_{j \in \mathcal{J}}\delta (r_j, v_j, \ell_j)$ is a Poisson point process with intensity
\begin{equation}\label{eqn:PP intensity with height condition}
C_{\alpha}\indic{[0, \delta]}(r) \indic{[0,v]}(\ell) e^{-c_{\alpha}\ell (\delta - r)^{-\frac{1}{\alpha - 1}}} dr v^{-\alpha} d\ell dv
\end{equation}
and $C_{\alpha}$ is given by \eqref{eqn:psi Levy Khinchin} and $c_{\alpha} = (\alpha-1)^{-\frac{1}{\alpha-1}}$ by \cite[Proposition 5.6]{goldschmidt2010behavior}. (To see this, take $W$ in \cite[Theorem 4.6.2]{LeGDuqMono} to be the identity map $\Wt_t = \zeta_t$, rather than the standard Brownian snake - note that the theorem only requires that the snake process is continuous.)

This means that we can analyse the Poisson process $\Gamma$ appearing in \eqref{eqn:PP for subtrees} by applying \eqref{eqn:PP Williams} and then averaging over ${\hat{\rho}}^{[L,T]}$. We then have to add our Brownian snake process back into the mix. Since the times $L$ and $T$ depend only on the tree structure and since conditionally on the tree, the snake is just a Gaussian process with independent increments, we can sample $W$ conditionally on $\Gamma$. This is equivalent to the following: first sample ${\hat{\rho}}^{[L,T]}$ according to \eqref{eqn:eta PP}, then sample $W$ restricted to $P_{[L,T]}$ as a linear Brownian motion on $(\xi_t)_{t \in [0, \delta]}$, started at $\Wt_L$. Then, conditionally on these, sample a Poisson process $\sum_{i \in \I} \delta(x_i, (\rho_i, W_i))$ with intensity measure
\[
{\hat{\rho}}^{[L,T]}(dr) \N_{\xi_t} (\cdot, H(\T) \leq \delta - r). 
\]
Note the analogy with the calculation appearing in the proof of Proposition~\ref{prop:momentI}: the only difference with the calculation there is the extra exponential factor in \eqref{eqn:PP intensity with height condition} and height restriction appearing in \eqref{eqn:PP Williams}, which arise as a result of our conditioning.

To synchronise our notation with that of previous calculations, we let $E^0$ denote the law of $\hat{\rho}^{[L,T]}$, $\Pi$ denote the law of the linear Brownian motion $\xi$ on the branch from $p(L)$ to $p(T)$, and $\mathbb{E}_{\Pcal}$ the law of the Poisson process of \eqref{eqn:eta PP}. We use the above construction to calculate as follows, applying \cref{prop:Poisson master formula} in the final line: 
\begin{align}\label{eqn:increase points PP exp}
\begin{split}
&\mathbb{P}^* \left(\Wt_s > \Wt_L - \eta \text{ for all } s \in [L,T] \middle| T<T' \right) \\
 &\qquad = E^0 \otimes \Pi_{\widehat{W}_L} \left[  \indic{\xi ([0, \delta]) \subset (\widehat{W}_L -\eta, \infty)}  \estart{\prod_{i \in \I} \indic{\underline{W} \geq \widehat{W}_L -\eta} \middle| (\hat{\rho}^{[L,T]},\xi)}{\mathcal P} \right] \\
  &\qquad = E^0 \otimes \Pi_{\eta} \left[  \indic{\xi ([0, \delta]) \subset ( 0, \infty)} \estart{\prod_{i \in \I} \indic{\underline{W} \geq 0 }\middle| (\hat{\rho}^{[L,T]},\xi)}{\mathcal P}\right] \\
&\qquad =  E^0 \otimes \Pi_{\eta} \left[ \indic{\xi ([0, \delta]) \subset (0, \infty) }\exp \left\{ -\int_0^{\delta} \N_{\xi_r}(\underline{W} < 0, H<\delta - r) {\hat{\rho}}^{[L,T]} (dr) \right\}  \right].
\end{split}
\end{align}

By the L\'evy-Khinchin formula (and taking appropriate limits to deal with the inhomogeneities) and setting $f(r,\delta) =  \N_{\xi_r} \left( H < \delta - r, \underline{W} < 0 \right)$ (this is positive when $\xi_r>0$), we have that 
\begin{align}\label{eqn:Laplace transform inhomogen}
E^0 \left[ \exp \left\{ -\int_0^{\delta} f(r,\delta) {\hat{\rho}}^{[L,T]} (dr) \right\}\right] = \exp \left\{ -\int_0^{\delta} \psi^{\delta-r} \left(f(r,\delta)\right)  dr \right\},
\end{align}
where, for $x,y >0$,
\begin{align*}
\psi^{y}(x) &= C_{\alpha}\int_0^{\infty} \left( \int_0^u u^{-1} (1-e^{-\ell x}) e^{-c_{\alpha} \ell y^{-\frac{1}{\alpha-1}}} d\ell \right) u^{-\alpha} du.
\end{align*}
For comparison, if we did not have the exponential term in \eqref{eqn:PP intensity with height condition} the relevant function would be given by the same integral but without the second exponential term, which is just the function $\tilde{\psi}$ (check \eqref{eqn:subordinator uniform split}). A simple calculation gives a nice lower bound for $\psi^y$:
\begin{align*}
\psi^y(x) &= C_{\alpha}\int_0^{\infty} \left( \int_0^u u^{-1} \left( e^{-c_{\alpha} \ell y^{-\frac{1}{\alpha-1}}} -e^{-\ell (x + c_{\alpha} y^{-\frac{1}{\alpha-1}})} \right) d\ell \right) u^{-(\alpha-1)} du\\
& = \widetilde{\psi} \left( x + c_{\alpha} y^{-\frac{1}{\alpha-1}} \right) - \widetilde{\psi} \left( c_{\alpha} y^{-\frac{1}{\alpha-1}} \right)\\
& \geq \widetilde{\psi} \left( x \right) -  c_{\alpha}^{\alpha -1} y^{-1}.
\end{align*}
In turn, the right hand side of \eqref{eqn:Laplace transform inhomogen} is upper bounded by
\begin{align*}
\exp \left\{ -\int_0^{\delta} \psi^{\delta} \left(f(r,\delta)\right)  dr \right\} &\leq \exp \left\{ -\int_0^{\delta/2} (\tilde{\psi} \left(f(r,\delta)\right) - c_{\alpha}^{\alpha -1} (\delta -r)^{-1}) dr \right\} \\
&= \exp \left\{ -\int_0^{\delta/2} \tilde{\psi} \left(f(r,\delta)\right) dr + c_{\alpha, \delta}'\right\},
\end{align*}
where the constant $c'_{\alpha, \delta}$ is finite and depends only on $\alpha$ and $\delta$. We can then lower bound the integral appearing in the last display by 
\begin{align}\label{eqn:psi eta analysis increase points}
\begin{split}
\int_0^{\delta/2} \tilde{\psi}\left(\N_{\xi_{r}} \left( H < \delta/2, \underline{W} < 0\right)\right) dr &\geq \int_0^{\delta/2} \left(\N_{\xi_{r}} \left( \underline{W} < 0 \right) - \N_{\xi_{r}} \left( H \geq \delta/2\right) \right)^{\alpha-1} dr \\
&\geq \int_0^{\delta/2} \N_{\xi_{r}} \left( \underline{W} < 0 \right)^{\alpha-1} - \N_{\xi_{r}} \left( H \geq \delta/2\right)^{\alpha-1} dr \\
&= \int_0^{\delta/2} \N_{\xi_{r}} \left( \underline{W} < 0\right)^{\alpha-1} dr - C_{\alpha,\delta}'\\
& = \frac{\alpha + 1}{(\alpha -1)^2} \int_0^{\delta/2} {\xi_{r}}^{-2} dr - C_{\alpha,\delta}',
\end{split}
\end{align}
where we used Equation \eqref{eq:rangesnake} for the last line, and where $C_{\alpha,\delta}'$ is another finite constant depending on $\alpha$ and $\delta$ only.

Finally we deduce using \cref{prop:Bessel} that there are constants $c_{\alpha,\delta}, C_{\alpha,\delta}<\infty$ such that 
\begin{align}\label{eqn:Bessel abs cont increase points}
\begin{split}
&\Pi_{\eta}\left[\indic{\xi ([0, \delta]) \subset (0, \infty)} \exp \left\{ - \int_0^{\delta} \tilde{\psi}\left(\N_{\xi_t} \left( H < \delta - t, \underline{W} < 0\right)\right) dt \right\}\right] \\
&\leq c_{\alpha,\delta} \Pi_{\eta}\left[\indic{\xi ([0, \delta]) \subset (0, \infty)} \exp \left\{ - \frac{\alpha + 1}{(\alpha -1)^2} \int_0^{\delta/2} \xi_t^{-2}\right\}\right] \\
&= C_{\alpha,\delta} \eta^{\tilde{\nu} + 1/2}E_{\eta}\left[{(R_{\delta/2})^{-\tilde{\nu}-\frac{1}{2}}}\right] \leq C_{\delta} \eta^{\tilde{\nu} + 1/2}E_0\left[(R_{\delta/2})^{-\tilde{\nu}-\frac{1}{2}}\right],
\end{split}
\end{align}
where $(R_t)_{t \geq 0}$ is a Bessel process of dimension ${2+2\tilde{\nu}}$ and
\[
\tilde{\nu} = \sqrt{2\frac{\alpha + 1}{(\alpha -1)^2} + \frac{1}{4}} = \frac{(\alpha + 3)}{2(\alpha -1)}.
\]
The final inequality in \eqref{eqn:Bessel abs cont increase points} follows from the stochastic monotonicity between squared Bessel processes started from $\eta$ and $0$. Since $P({R_{\delta/2} < x}) = O (x^{2 + 2\tilde{\nu}})$ for all sufficiently small $x$ (e.g. see \cite[Equation (15)]{lawler2018notes}) the final expectation in \eqref{eqn:Bessel abs cont increase points} is finite and in particular does not depend on $\eta$.
\end{proof}

Using scaling properties of the \Ito measure, we deduce the following.

\begin{corollary}\label{cor:inc points lem normalised}
Under $\Noo$ choose $U$ uniformly on $[0, 1]$, take $\delta \in (0,1)$ and set $T=T_U, T'=T_U'$ and $L=L_U$. Then there exists $C_{\delta} < \infty$ such that for all $\eta \in (0,1)$ and all $x \in \R$:
\[
\Noo_x\left(T<T' \text{ and } \Wt_s > \Wt_L - \eta \text{ for all } s \in [L,T]\right) \leq C_{\delta} \epsilon \eta^{\frac{\alpha+1}{\alpha-1}}.
\]
\end{corollary}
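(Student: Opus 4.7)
The plan is to deduce this from the preceding lemma by combining the disintegration formula \eqref{eqn:Ito measure integrate s} with the snake scaling relation \eqref{eqn:Ito scaling}. Since the event in the statement is invariant under a global translation of the starting point $x$, I would immediately reduce to $x=0$ and set
\[
g(\delta, \epsilon, \eta) := \Noo_0 \bigl( T < T' \text{ and } \Wt_s > \Wt_L - \eta \text{ for all } s \in [L,T] \bigr),
\]
which is the quantity I want to bound.

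The first step is to verify the scaling identity
\[
\N^{(s)}_0 \bigl( T < T' \text{ and } \Wt_u > \Wt_L - \eta \text{ for all } u \in [L,T] \bigr) = g\!\left( \delta \, s^{-(1-1/\alpha)}, \ \epsilon \, s^{-(1-1/\alpha)}, \ \eta \, s^{-(1-1/\alpha)/2} \right)
\]
for every $s > 0$, with $U$ uniform on $[0,s]$ on the left-hand side. This should follow from \eqref{eqn:Ito scaling}: under $\Phi_{1/s}$ the lifetime becomes $1$ (so $U/s$ is uniform on $[0,1]$), the height process is divided by $s^{1-1/\alpha}$ and the snake values by $s^{(1-1/\alpha)/2}$. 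Since $T_U, T_U', L_U$ are functionals of the height process and the condition on the snake tip is translation-invariant in $\Wt_L$, the original event translates into the same event with the three rescaled parameters. Combining this identity with the disintegration formula and the previous lemma yields
\[
\int_0^\infty g\!\left( \delta s^{-(1-1/\alpha)}, \epsilon s^{-(1-1/\alpha)}, \eta s^{-(1-1/\alpha)/2} \right) \frac{ds}{\alpha \Gamma(1 - 1/\alpha) s^{1/\alpha + 1}} \leq C_\delta \, \epsilon \, \eta^{(\alpha+1)/(\alpha-1)}.
\]

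To extract a pointwise bound on $g(\delta,\epsilon,\eta)$, I would then restrict the integral to $s \in [1,2]$, on which the weight $ds/(\alpha\Gamma(1-1/\alpha) s^{1/\alpha+1})$ is bounded below by some constant $c_0>0$, and exploit the monotonicity of the event: decreasing in $\delta$ (a larger climb is harder), increasing in $\epsilon$ (a larger descent is easier to avoid) and increasing in $\eta$ (weaker snake constraint). For $s \in [1,2]$ the three rescaled arguments lie in $[\delta/2^{1-1/\alpha},\delta] \times [\epsilon/2^{1-1/\alpha},\epsilon] \times [\eta/2^{(1-1/\alpha)/2},\eta]$, so monotonicity lower-bounds the integrand by $g(\delta, \epsilon/2^{1-1/\alpha}, \eta/2^{(1-1/\alpha)/2})$. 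Rescaling $\epsilon$ and $\eta$ by the corresponding powers of $2$ (which lie in the valid range once $\eta$ is below a constant depending only on $\alpha$) then produces the desired bound $g(\delta,\epsilon,\eta) \leq C'_\delta \epsilon \eta^{(\alpha+1)/(\alpha-1)}$. The leftover case of $\eta$ close to $1$ is harmless: there $\eta^{(\alpha+1)/(\alpha-1)}$ is bounded below by a positive constant, so it suffices to bound $\Noo_0(T<T')$ by a constant multiple of $\epsilon$, which follows by the same scaling-plus-monotonicity argument applied to the weaker bound $\N_x(T<T') \leq c_\delta \epsilon$ already isolated in Step~1 of the preceding lemma.

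The only substantive step is the bookkeeping for the scaling identity---confirming that a uniform time $U$ on $[0,\sigma]$ transforms cleanly into a uniform time on $[0,1]$ under $\Phi_{1/\sigma}$, and that the three conditions ($T<T'$, the definition of $L$, and the lower bound on $\Wt$) decouple into the three scaled parameters as claimed. Once that is verified, everything else reduces to elementary monotonicity and tracking of universal constants.
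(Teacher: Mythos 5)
Your overall route -- disintegrate $\N_x$ over $\N^{(s)}$ via \eqref{eqn:Ito measure integrate s}, rescale with \eqref{eqn:Ito scaling}, restrict to $s\in[1,2]$, and recover a pointwise bound by monotonicity -- is exactly what the paper's one-line remark (``using scaling properties of the \Ito measure'') is gesturing at, and your scaling bookkeeping (the exponents $s^{-(1-1/\alpha)}$ for $\delta,\epsilon$ and $s^{-(1-1/\alpha)/2}$ for $\eta$, the reduction to $x=0$ by translation invariance, and the treatment of $\eta$ near $1$ via the bound $\N_x(T<T')\leq c_\delta\epsilon$) is correct. However, there is a genuine gap in the monotonicity step: the full event is \emph{not} decreasing in $\delta$. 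Only the first half $\{T<T'\}$ is. The problem is that both $T=T^{(\delta)}$ and $L=L^{(\delta)}$ move when $\delta$ changes, and they do not move compatibly: for $\delta_1<\delta_2$ the height process may, after reaching level $\zeta_U+\delta_1$ at time $T^{(\delta_1)}$, return to level $\zeta_U$ (without dropping to $\zeta_U-\epsilon$) before climbing to $\zeta_U+\delta_2$. In that case $L^{(\delta_2)}>T^{(\delta_1)}$, so the intervals $[L^{(\delta_1)},T^{(\delta_1)}]$ and $[L^{(\delta_2)},T^{(\delta_2)}]$ are essentially disjoint and the reference values $\Wt_{L^{(\delta_1)}}$, $\Wt_{L^{(\delta_2)}}$ differ; the snake condition on one interval neither implies nor is implied by the one on the other. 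So ``a larger climb is harder'' fails for the conjunction, and the inequality $g(\delta',\epsilon',\eta')\geq g(\delta,\epsilon',\eta')$ for $\delta'\leq\delta$ that you use to freeze the first argument at $\delta$ is unjustified.

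This matters because in your integral the first argument $\delta'=\delta s^{-(1-1/\alpha)}$ genuinely varies with $s$, and without monotonicity (or at least continuity) in $\delta$ you cannot extract the value of $g$ at the single parameter $\delta$ from an integral over a range of parameters. The monotonicity in $\epsilon$ and $\eta$ is fine (there only $T'$ and the threshold move, each in the right direction), so the gap is localised entirely to $\delta$. To repair it you need an additional input: for instance, show that for $s$ in a compact interval the lemma's bound holds uniformly for the \emph{varying} parameter $\delta'\in[\delta 2^{-(1-1/\alpha)},\delta]$ and that $\delta''\mapsto g(\delta'',\epsilon,\eta)$ is right- or left-continuous (so that a bound on $\int g(\delta'',\cdot)\,d\delta''$ over a shrinking window recovers a pointwise bound), or else replace $E(\delta)$ throughout by a $\delta$-monotone envelope for which Step~2 of the preceding lemma can still be carried out. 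As written, the deduction does not close.
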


We are now ready to prove~\cref{thm:no common increase points intro}.

\begin{proof}[Proof of \cref{thm:no common increase points intro}]
Select $U \sim \textsf{Uniform} ([0,1])$ and take some small $\kappa \in (0, \frac{\alpha + 1}{2}-1)$. For each $1 \leq i \leq \lceil \epsilon^{-\frac{\alpha + \kappa}{\alpha-1}} \rceil$ set 
\[
U_i = (U+i\epsilon^{\frac{\alpha + \kappa}{\alpha-1}}) \pmod 1.
\]
For each $i$ we also define
\begin{align*}
T_i = \inf \{s \geq U_i : \zeta_s= \zeta_{U_i} + \delta \}, \ \ T_i' = \inf \{s \geq U_i : \zeta_s= (\zeta_{U_i} - \epsilon ) \vee 0 \}
\end{align*}
(for now we suppress the dependence on $\epsilon$ in the notation). On the event $\{T_i< T_i'\}$ set
\[
L_i = \sup \{ s < T_i: \zeta_s = \zeta_{U_i}\}.
\]
Let $A_{\epsilon, \eta}$ be the event that there exists $i$ such that $T_i < T_i'\wedge \infty$ and 
\[
\Wt_s > \Wt_{L_i} -\eta
\]
for all $s \in [L_i, T_i]$. By \cref{cor:inc points lem normalised} and a union bound (note that the marginal of each $U_i$ is uniform on $[0,1]$), we have that
\begin{align*}
\Noo (A_{\epsilon, \eta}) &\leq \sum_{i=1}^{\lceil \epsilon^{-\frac{\alpha + \kappa}{\alpha-1}} \rceil} \Noo \left(T_i < T_i', \, \Wt_s > \Wt_{L_i} -\eta \quad \forall s \in [L_i, T_i]\right) \\
& \leq 2\epsilon^{-\frac{\alpha + \kappa}{\alpha-1}} C_{\delta} \epsilon \eta^{\frac{\alpha + 1}{\alpha -1}}\\
&  = 2\epsilon^{-\frac{(1+\kappa)}{\alpha-1}} C_{\delta} \eta^{\frac{\alpha + 1}{\alpha -1}}.
\end{align*}
Now choose $\frac{1+\kappa}{\alpha + 1} < b < \frac{1}{2}$ (this is possible by our choice of $\kappa$) and define a sequence $(\epsilon_p, \eta_p)_{p \geq 1}$ by setting $\epsilon_p = 2^{-p}$ and $\eta_p = (4\epsilon_p)^b$. It then follows from the first Borel-Cantelli lemma that $\Noo(A_{\epsilon_p, \eta_p} \text{ i.o.})=0$.

As in \cite[proof of Lemma 2.2]{LeGall2007BrownianMapTopological}, we now use the following observation to complete the argument. To prove the lemma, it is enough to show that there cannot exist $\delta>0$ and $r \geq 0$ such that $\inf \{s \geq r : \zeta_s= \zeta_r + 2\delta \}< \infty$ and 
\begin{equation}\label{eqn:event for increase points}
\zeta_s \geq \zeta_r \text{ and } \Wt_s \geq \Wt_r \text{ for every } s \in [r, \inf \{ u \geq r : \zeta_u = \zeta_r + 2 \delta\}].
\end{equation}
Note also that the above event is monotone in $\delta$ so it is sufficient to prove that for a fixed (small) $\delta$ there does not exist such an $r \in [0,1]$. We also use \cref{lem:height holder} which says that the height function $\zeta$ is $\Noo$ almost surely H\"older continuous with exponent $\gamma$ for any $\gamma < 1-\frac{1}{\alpha}$. In particular, since $|U_i - U_{i+1}| \pmod 1 = \epsilon^{\frac{\alpha + \kappa}{\alpha -1}}$, it follows that for all sufficiently small $\epsilon$,
\begin{equation}\label{eqn:Holder consequence}
\sup_{s,t \in [U_i, U_{i+1}]}|\zeta_{s} - \zeta_{t}| \leq \epsilon
\end{equation}
for all $i$, $\Noo$-almost surely (here for the index $i$ where $U_{i+1}< U_i$, the interval in the expression should be interpreted as $[U_i , 1] \cup [0, U_{i+1}]$).

\begin{figure}[h]
\includegraphics[height=5cm]{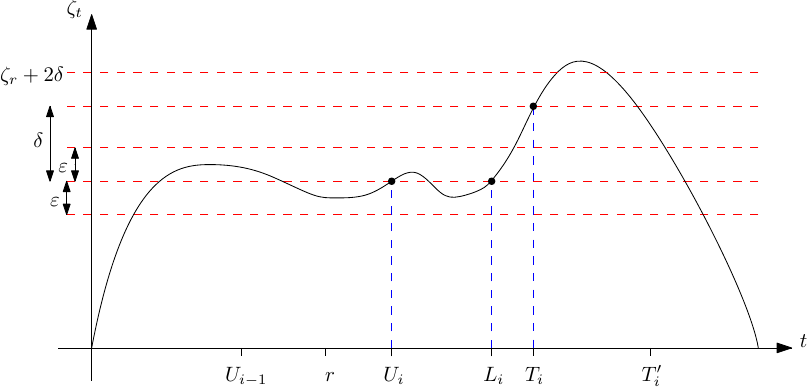}
\centering
\caption{Relation between times appearing in the event \eqref{eqn:event for increase points}. Not to scale.}\label{fig:noincrease} 
\end{figure}

Now fix $\delta>0$ and suppose that there exists $r >0$ realising the event in \eqref{eqn:event for increase points}. 
 In what follows we reintroduce the dependence on $\epsilon$ into the notation for $U_i, T_i, T_i'$ through superscripts. For each $p \geq 1$ take $i$ such that $r \in (U^{\epsilon_p}_{i-1}, U^{\epsilon_p}_{i}]$ (note that since we assumed that $r \neq 0$ and it is not possible that $r=1$, we will have $0 < \epsilon^{\frac{\alpha + \nu}{\alpha-1}} < r < 1- \epsilon^{\frac{\alpha + \nu}{\alpha-1}} < 1$ for all sufficiently large $p$, so this interval is well-defined). Note that on the event $T_i < T_i'$ it follows that $ \dt(p(U^{\epsilon_p}_i), p(L^{\epsilon_p}_i)) \leq 2\epsilon$ by \eqref{eqn:distance from height}. Hence combining with \eqref{eqn:Holder consequence} gives that, if $p$ is large enough (i.e. $\epsilon_p$ is small enough) and the event $T_i^{\epsilon_p} < T^{\epsilon_p'}_i$ occurs,
\[
\dt(p(r), p(L^{\epsilon_p}_i)) \leq \dt(p(r), p(U^{\epsilon_p}_i)) + \dt(p(U^{\epsilon_p}_i), p(L^{\epsilon_p}_i)) \leq 4\epsilon_p.
\]
Moreover \eqref{eqn:Holder consequence} also implies (again if $p$ is large enough) that $T^{\epsilon_p}_{i} \leq \inf \{ u \geq r : \zeta_u = \zeta_r + 2 \delta\}$, and \eqref{eqn:event for increase points} and \eqref{eqn:Holder consequence} together imply that $T^{\epsilon_p}_i < T^{\epsilon_p'}_i$.

Hence on the event in \eqref{eqn:event for increase points} and provided that $p$ is sufficiently large, it holds for all $s \in [U^{\epsilon_p}_i, T^{\epsilon_p}_i]$ that
\[
\Wt_s \geq \Wt_r \geq \Wt_{L^{\epsilon_p}_i} - (4\epsilon_p)^b = \Wt_{L^{\epsilon_p}_i} - \eta_p.
\]
Here the last line follows by \cref{lem:height holder}(b) since $\dt(p(r), p(L^{\epsilon_p}_i))<4\epsilon_p$, and therefore contradicts the fact that $A_{\epsilon_p, \eta_p}$ cannot happen infinitely often, $\Noo$-almost surely.

To rule out the case $r=0$, we can re-root the tree at at a uniform point. By \cref{prop:uniform rerooting} this then reduces to the case above.
\end{proof}

\newpage

\addcontentsline{toc}{section}{Table of notation}
\section*{Table of notation}\label{sec:notations}

\subsection*{Stable Trees}
\begin{longtable}{p{3.5cm}p{12cm}}
$(\Ta, \dt)$ & stable tree with distance $\dt$  \\
$X$ & spectrally positive stable Lévy process with index $\alpha$\\
$\Delta_s$ & Equal to $\X_s - \X_{s^-}$, the size of the jump at time $s$ \\
$\psi$ & branching mechanism \\
$C_{\alpha}, \pi$ & opposite of drift coefficient and jump measure of $X$\\
$I_t, I_{s,t}$ & running infimum of $X$, infimum of $X$ on $[s,t]$ \\
$\X$ & normalised excursion of $X$ above its infimum at time 1\\
$N$ & \Ito excursion measure for excursions (and trees) \\
$H$ & height process\\
$(\rho_t, {\hat{\rho}}_t)_{t\geq 0}$ & exploration process and dual exploration process \\
$\bPb_{\mu}(d \rho)$ & law of exploration process started from distribution $\mu$ \\
$p$ & canonical projection $[0,1] \to \Ta$\\
\end{longtable}

\subsection*{Stable Snakes}
\begin{longtable}{p{3.5cm}p{12cm}}
$(Z_v)_{v \in \Ta}$ & stable snake, indexed by $\Ta$ \\
$\W$ & set of continuous real-valued functions on interval of the form $[0,\zeta]$\\
$\zeta$ & lifetime of the snake\\
$w$, $\widehat{w}=w(\zeta_w)$ &  element of $\W$, and its tip\\
$(\xi_t)_t$ & standard linear Brownian motion\\
$\Pi_x$ & law of standard linear Brownian motion started at $x$\\
$Q_{w_0}^h$ & snake on forest with height function $h$, with initial condition $w_0$\\
$\Pb_{\mu,w}(d \rho, dW)$ & law of stable snake with initial condition $(\mu,w)$\\
$\big((W_s(t))_{t \leq \zeta_s}\big)_{s \in [0,1]}$ & snake indexed by time \\
$(\widehat{W}_t)_{t \in [0,1]}$ & $(W_t(\zeta_t))_{t \in [0,1]}$, the tip of the snake indexed by time \\
$\Pb_{x}$ & law of stable snake with initial condition $(0,x)$ \\
$\Pb^*_{\mu,w}(d \rho, dW)$ & law of $(\rho,W)$ under $\Pb_{\mu,w}$, killed when $\rho$ first hits 0\\
$\N_x$ and $\Noo_x$ & excursion measures for stable snakes\\
$\zetas, \Ws, \Wts$ & lifetime, path and tip of the snake $W$ at $s$\\
$(U^{(1)}, U^{(2)})$ & two-dimensional subordinator giving the law of the exploration and dual exploration processes along a branch to a uniform time point\\
 $(\Omega_0,\mathcal F_0, P^0)$ & probability space on which $(U^{(1)}, U^{(2)})$ is defined\\
$E^0$ & expectation associated to $(\Omega_0,\mathcal F_0, P^0)$ \\
$\tilde{\psi}$ & Laplace exponent of the marginals $U^{(1)}$ and $U^{(2)}$\\
$\psi'$ & Laplace exponent of the sum $U^{(1)}+U^{(2)}$\\
$\tilde{\pi}, \pi'$ & jump measures respectively associated to $\tilde{\psi}$ and $\psi'$\\
$(J_a, \hat{J}_a)$ & pair of random measures:  \\
& $(J_a, \hat{J}_a)(dt)=(\indic{[0,a]}(t) dU_t^{(1)}, \indic{[0,a]}(t) dU_t^{(2)})$\\
$\mathcal{R}$ & range of the stable snake \\
$\Pcal$ & Poisson point process with intensity $(J_h + \hat J_h)(da) \otimes \mathbb N_{\xi_a} (dw)$, describing the exploration and dual exploration of the tree and snake along the branch\\
$\mathbb{E}_{\Pcal}$ & expectation w.r.t Poisson point process $\Pcal$\\
$\tau^D(w)$ & first exit time of domain $D$ for $w$ \\
$\theta^D_s$ & time change on snake trajectories before their exit time from $D$ \\
$ \widetilde{W}^D_s = W_{\theta^D_s}$ & time-changed snake\\
$L^D_{s}$ & exit local time from $D$\\
$\Z^D$ & exit measure from $D$\\
$s^*$ & unique time where $W$ attains its minimum \\
$L_s$ & exit local time from $(a,b)$ until time $s$\\
$\tilde{L}_s$ & additive functional of the snake: \\
&$d \widetilde L_s = \lambda \indic{\{\tau^{(a,b)}(W_s) = +\infty\} } ds  + \left( \lambda^{1/\alpha} + \mu \right) dL_s$\\
$v_{\lambda,\mu,a,b} (x)$ & $ \mathbb N_x \left[ 1 - \exp(-\lambda \sigma - \mu L_\sigma)\right]$ \\
$v_{\lambda,\mu} (x)$ &  $v_{\lambda,\mu,0,+\infty}(x)$\\
$v_{\lambda} (x)$ &$\lim_{\mu\to+\infty} v_{\lambda,\mu,0,+\infty}(x)=\mathbb N_x \left[ 1 - \indic{0 \notin \mathcal{R}} \exp(-\lambda \sigma)\right]$ \\
$\I, \overline{\I}$ & occupation measure of the snake $Z$ and the shifted snake $\ZZZ$ \\
\end{longtable}

\subsection*{Misc}
\begin{longtable}{p{3.5cm}p{12cm}}
$\R_+$      & $[0,+\infty)$\\
$(R_t)_{t \geq 0}$, $P_x^{(d)}$, $\mathbb{E}_x^{(d)}$ &  Bessel process of dimension ${2+2\tilde{\nu}}$, its law and expectation \\
$\tilde{\lambda}, \tilde{\nu}$ & $\tilde{\lambda}^2 = \frac{2 \alpha (\alpha +1)}{(\alpha -1)^2}$, $\tilde{\nu} = \frac{3 \alpha +1}{2(\alpha -1)}$ \\
$M_f(\R_+)$ & set of finite measures on $\R_+$\\

\end{longtable}


\addcontentsline{toc}{section}{References}
\printbibliography

\bigskip
\bigskip

{Eleanor Archer and Laurent Ménard:}

{Modal'X, UMR CNRS 9023, UPL, Univ. Paris-Nanterre, F92000 Nanterre, France.}  

\medskip

{Ariane Carrance:}

{CMAP, UMR CNRS 7641, \'Ecole polytechnique, F9112 Palaiseau, France.}

\end{document}